\numberwithin{figure}{section}
\newcommand{\thistheoremname}{}
\newtheorem*{genericthm*}{\thistheoremname}
\newenvironment{namedthm*}[1]
{\renewcommand{\thistheoremname}{#1}%
	\begin{genericthm*}}
	{\end{genericthm*}}
\newtheorem{thm}{Theorem}[section]
\newtheorem{prop}[thm]{Proposition}
\newtheorem{lemma}[thm]{Lemma}
\newtheorem{remark}[thm]{Remark}
\newtheorem{claim}[thm]{Claim}
\newtheorem{corol}[thm]{Corollary}
\theoremstyle{definition}
\newtheorem{definition}[thm]{Definition}
\newtheorem{rem}[thm]{Remark}
\newtheorem*{thm*}{Theorem}
\newtheorem*{definition*}{Definition}
\newtheorem*{lemma*}{Lemma}
\newtheorem*{statement*}{Claim}
\newtheorem{example}[thm]{Example}
\newtheorem*{corol*}{Corollary}
\newtheorem*{question*}{Question}
\def\q#1.{{\bf #1.}}
\def\Z{\mathbb Z}
\def\R{\mathbb R}
\def\H{\mathbb H}
\newcommand{\MM}{\operatorname{M}}
\newcommand{\OR}{\operatorname{OR}}
\newcommand{\ORI}{\operatorname{ORI}}
\newcommand{\opt}{\operatorname{opt}}
\newcommand{\diam}{\operatorname{diam}}
\newcommand{\mins}{\operatorname{Br}}
\newcommand{\Star}{\operatorname{Star}}
\newcommand{\Vol}{\operatorname{Vol}}
\newcommand{\Angle}{\operatorname{Angle}}
\newcommand{\Cone}{\operatorname{Cone}}
\long\def\comment#1{}
\author{Anna Erschler}
\address{A.E.: C.N.R.S., \'{E}cole Normale Superieur, PSL Research University, France}
\email{anna.erschler@ens.fr}
\author{Ivan Mitrofanov}
\address{I.M.: C.N.R.S., \'{E}cole Normale Superieur, PSL Research University, France }
\email{phortim@yandex.ru }
\title{Spaces that can be ordered effectively: virtually free groups and hyperbolicity}
\date{\today}
\subjclass[2010]{20F65, 20F67,20F69, 20F18}
\keywords{hyperbolic space, hyperbolic groups, 
	quasi-isometric invariants, uniform embeddings, 	space of ends, accessible groups}
\begin{document}
	\maketitle
	
	\begin{abstract}
		
		We study asymptotic invariants of metric spaces, defined in terms of the travelling salesman  problem, and our goal is  to classify groups and spaces depending on how well they can be ordered in this context.
		We characterize  virtually free groups as those  admitting an order which has some efficiency on
		$4$-point subsets.  
		We show that all $\delta$-hyperbolic spaces can be ordered extremely efficiently, for the question when the number of points of a subset tends to $\infty$. 
	\end{abstract}
	
	\section{Introduction}
	Given a metric space $(M,d)$, we consider  a finite subset $X$ of $M$.  We denote by  $l_{\opt}(X)$  the minimal length of a path which visits all points of $X$.
	Now assume that  $T$ 
	is an order on $M$ (here and in the sequel we always assume that the orders are total orders).
	For a finite subset $X \subset M$ we consider the restriction of the order
	$T$ on $X$, and enumerate 
	the points of $X$ accordingly:
	$$
	x_1 \leq_T x_2 \leq_T x_3 \leq_T \dots \leq_T x_{k}
	$$
	where $k = \#X$. Here and in the sequel  $\#X$ denotes the cardinality of the
	set $X$.
	We denote by $l_T(X)$ the length of the corresponding path
	$$
	l_T(X) := d(x_1, x_2) + d(x_2, x_3) + \dots + d(x_{k-1}, x_k).
	$$

	\begin{definition}
		Given an ordered metric space $\MM=(M, d,T)$ containing at least two points
		and $k\ge 1$, we define the {\it order ratio function}
		$$
		\OR_{M,T}(k) := \sup_{X\subset M | 2 \le \# X \le k+1} \frac{l_T(X)}{l_{\opt}(X)}.
		$$
	\end{definition}
	
	If $M$ consists of a single point, then the supremum in the definition above is taken over an empty set. 
	We use in this case the convention that $\OR_{M,T}(k)=1$ for all $k\ge 1$.

	Given an (unordered) metric space $(M,d)$, we also  define the {\it order ratio function} as 
	$$
	\OR_{M}(k) := \inf_{T} \OR_{M,T}(k).
	$$
	
	Given an algorithm to find an approximate solution of an optimization problem, the worst case of the ratio between the value provided by this algorithm
	and the optimal value is called the {\it competitive
		ratio} in computer science literature.
	We will recall below the setting of the universal travelling salesman  problem. 
	In this setting
	the function $\OR(k)$ corresponds to the competitive ratio for $k$-point subsets, in the situation when we do not require for a path to return to the starting point.
	
	It can be shown that even in the case of a finite metric space, where the infimum in the definition above is clearly attained, 
	the order that attains this minimum
	might depend on $k$ (see Remark \ref{rem:numberofpoints}).
	
	We say that a metric space $M$ is {\it uniformly discrete} if there exists $\delta>0$ such that for all pairs of points $x \ne y$ the
	distance between $x$ and $y$ is at least $\delta$. 
	It is not difficult  to see that the asymptotic class of the order ratio function behaves well with respect to quasi-isometries of
	uniformly discrete spaces, see Lemma \ref{le:qi}.
	
	This is no longer true without the assumption of uniform discreteness:  $M_1 = \mathbb{R}$ and 
	$M_2=\mathbb{R}\times [0;1]$ are in the same quasi-isometry class, but $M_1$ has a bounded order ratio function and the order ratio function of $M_2$ is unbounded; 
	indeed, the latter space contains a square. It is shown in \cite{HKL}
	that given  any order on the $n\times n$ square,  there exist a finite subset of this lattice, such that the length of the path, associated to this order, is at least  ${\rm Const} \sqrt[6]{\frac{\log n}{\log \log n}} $ multiplied  by the length of the optimal path. This implies
	an analogous bound in terms of the number of points
	of the subset $X$ rather than the size of the square:
	$\OR_{[0,1] \times [0,1]}(k) \ge  {\rm Const}\sqrt[6]{\frac{\log k}{\log \log k}}$.
	
	To avoid non-stability with respect to quasi-isometries, given a metric space $M$,  we can consider  order ratio
	functions for $(\varepsilon, \delta)$-nets of $M$ (see Definition \ref{def:net}).
	It is not difficult to see (see Lemma \ref{le:pullback}) that the function $\OR_{M'}$ is well-defined, up to a multiplicative constant,
	for $M'$ being an $(\varepsilon, \delta)$-net of $M$. 
	This allows us to speak about the {\it order
		ratio invariant} of a metric space, see Definition \ref{def:ORI} in Section \ref{sec:firstexamples}.

	In contrast with previous works on the competitive ratio of the universal travelling salesman  problem (\cites{bartholdiplatzman82, bertsimasgrigni, jiadoubling, HKL, Christodoulou, Schalekamp, bhalgatetal, gorodezkyetal, eades2}),
	we are interested not only in this asymptotic behaviour but also in particular values
	of $\OR(k)$. In the definition below we introduce the {\it order breakpoint} of an order :
	
	\begin{definition} \label{def:mins}
		Let $M$ be a metric space, containing at least two points,
		and let $T$ be an order on $M$.  
		We say that the order  
		breakpoint 
		$\mins(M,T) = s$ if $s$ is the smallest integer such that $\OR_{M,T}(s) < s$. 
		If such $s$ does not exist, we say that $\mins(M,T)=\infty$.
		In particular, $\mins(M,T)=2$ for a one-point space $M$.
	\end{definition}
	
	Given an (unordered)
	metric space $M$, we define $\mins(M)$
	as the minimum over all orders $T$ on $M$:
	$$
	\mins(M) = \min_{T}
	\mins(M,T).
	$$
	
	Given an order on $M$, the order breakpoint describes the minimal value $k$ for which  using this order as a universal order for the travelling salesman problem 
	has some efficiency on $(k+1)$-point subsets.

	From the definition, it is clear that $\mins(M,T) \ge 2$ for any $M$  and it is easy to see that $\mins(M,T)= 2$
	if $M$ is finite.  In Section \ref{sec:examples}, Theorem \ref{cor:finitegraphs} we evaluate the order breakpoint of finite graphs (containing edges): it is 2 for graphs homeomorphic to intervals, 3 for cactus graphs (see Definition \ref{def:cactus}) and 4 otherwise.
	In what concerns infinite graphs and metric spaces, it can be shown (see Lemma \ref{le:notrayline}) that a metric space quasi-isometric to a geodesic one has $\mins \ge 3$ unless it is bounded or quasi-isometric to a ray or to a line;  metric on the vertex set of a graph has order breakpoint equal to 2 if and only if the graph is bounded, or quasi-isometric to a ray or line.
	
	The order breakpoint is a quasi-isometric invariant for uniformly discrete metric spaces (see Lemma \ref{le:snakesqi}), hence it is well-defined for finitely generated groups. 
	We characterize finitely generated groups with small ($\le 3$) order breakpoint
	(in other words, groups that admit an order that has some efficiency on $4$-point subsets):
	
	\begin{namedthm*}{Theorem A} (= Thm \ref{thm:girth4})
		Let $G$ be a finitely generated group. 
		Then $G$ is virtually free if and only if $\mins(G) \le 3$ (in other words, if $G$ admits an order $T$ with  $\mins(G,T) \le 3$).
	\end{namedthm*}

	Using Stallings theorem 
	and Dunwoody's result about accessibility of finitely presented groups, we reduce the proof of  Theorem A to  Lemma \ref{le:oneended}, which provides a lower bound for $\mins$
	for one-ended groups, and to Lemma \ref{le:infprcycle} about
	mappings of 
	trimino graphs to infinitely presented groups. 
	
	It seems interesting to better understand  groups and spaces with given order breakpoint, in particular with $\mins =4$.
	We mention that there are examples of metric spaces with arbitrarily large order breakpoint (see Remark \ref{rem:graphsqiHd}).
	
	\begin{figure}[!htb] 
		\centering
		\includegraphics[scale=.45]{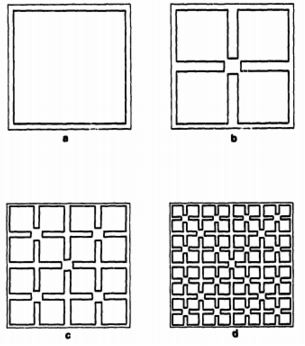}
		\caption{Bartholdi and Platzman used the order of the first appearance along the space-filling Sierpi\'{n}ski curve.}
		\label{pic:bartholdiplatzman}
	\end{figure}
	
	\bigskip
	
	Now we discuss the asymptotics of the order ratio function.	
	The travelling salesman problem is a problem to find a cycle of minimal total length that visits each of $k$ given points.
	Bartholdi and Platzman introduced the idea to order all points of a metric space and then, given a $k$-point subset,  visit its points in the corresponding order
	\cite{bartholdiplatzman82}, \cite{bartholdiplatzman89}.
	Such an approach is called the {\it universal travelling salesman problem}.
	(One of the motivations of  Bartholdi and Platzman was that this approach works
	fast for subsets  of a two-dimensional plane.) 
	Their  argument implies a logarithmic upper bound for the function $\OR_{\mathbb{R}^2}(k)$. 
	
	For some metric spaces the function $\OR$ is even better. 
	Namely, it is not difficult to show  that $\OR(k) \leq 2$, for all $k\ge 1$ in the case of metric trees, see Thm 2 \cite{Schalekamp}
	(an approriate order is shown on Figure \ref{pic:orderedtree}, see also Corollary \ref{re:anytree}). 
	
	\bigskip
	
	We prove that this best possible situation ($\OR(k)$ is bounded above by a constant) holds true for uniformly discrete
	hyperbolic
	spaces.
	
	\begin{namedthm*}{Theorem B} (= Thm \ref{thm:hyperbolicspace}). Let $M$  be a $\delta$-hyperbolic graph of
		bounded degree. 
		Then there exists  an order $T$ and
		a constant $C$
		such that for all $k\ge 1$
		$$
		\OR_{M,T}(k) \le C.
		$$
	\end{namedthm*}
	
	It is essential that the constant $C$ depends  on  $M$. 
	In contrast with trees, this constant does not need to be bounded by $2$ and it can be arbitrarily large.
	In particular, for subsets of $\H^d$ this constant
	depends on the dimension $d$ and the constants of uniform discreteness; 
	if we fix $\varepsilon$ and consider $(\varepsilon, \varepsilon)$-nets, then this constant tends to infinity as the dimension $d$ of $\mathbb{H}^d$ tends to infinity. 
	In contrast with trees where the statement is true for any metric tree, it is essential in general to assume that the degree of the graph
	$M$ is bounded.

	It is not difficult to see that  some non-hyperbolic metric spaces also have bounded order ratio function; a particular family that arises from gluing together trees or hyperbolic spaces is studied in
	Subsection \ref{Subsection:nothyperbolic}.
	So far we do not know whether a finitely generated non-hyperbolic group can have bounded $\OR(k)$; for more related questions see the discussion at the end
	of Subsection \ref{Subsection:nothyperbolic}.

	We continue our study of the order ratio function and the order breakpoint in our subsequent paper \cite{ErschlerMitrofanov2}, where we show  their relation with Assouad-Nagata dimension.

	\subsection{Plan of the paper}
	
	In Section $2$ we discuss basic properties of the order ratio  function and the order breakpoint, their relation to quasi-isometries,  uniform mappings and the notion of ``snakes''.
	
	In Section 3 we start 
	with elementary examples of metric trees. We describe the asymptotics of the order ratio
	function for compositions of wedged sums of metric spaces.
	As a particular case we characterise  the values of $\mins$ of finite graphs.
	
	In Section 4 we study metric spaces and groups with small order breakpoint. 
	The already mentioned Lemma \ref{le:notrayline} studies spaces with $\mins = 2$. 
	Its proof -- which uses a result of M.~Kapovich about what are called $(R,\varepsilon)$-tripods -- is given in Appendix $B$.
	The main goal of the section is  to prove  Theorem \ref{thm:girth4} and thus characterise groups with $\mins \le 3$.
	
	The goal of Section 5 is the proof of  Theorem B. Since  by a theorem of Bonk and Schramm \cite{bonkschramm} any $\delta$-hyperbolic space of bounded geometry can be  quasi-isometrically embedded into $\H^d$, taking in account  Lemma \ref{le:qi} about quasi-isometric embeddings,
	the main goal in the proof of Theorem~B is to prove it for an $(\varepsilon, \varepsilon)$-net of $\H^d$. 
	We do it by choosing an appropriate tiling
	of this space and considering a naturally associated tree; there is a family of {\it hierarchical orders} on this tree, and we choose any of these orders. 
	While it is well-known
	that finite subsets of hyperbolic spaces can be well approximated by metric trees, we would like to stress that such an approximation does not provide a priori any upper bound for the order ratio function.
	We control the total length of piecewise-geodesic continuous paths associated to this order  by bounding the number of tiles that are visited by such paths (here we use  bounds on cones of a continuous path in a hyperbolic space, Lemma \ref{le:cone}), and  bounding the number of visits
	(see Lemma  \ref{lem:multiplicity}). 
	
	\begin{figure}[!htb] 
		\centering
		\includegraphics[scale=.75]{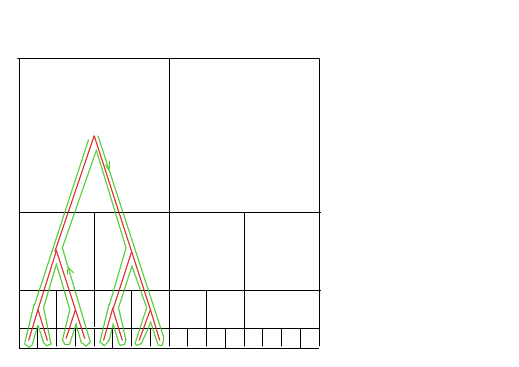}
		\caption{An order on the tiles of a hyperbolic plane $\H^2$ 
			corresponding to the first visit of a tile by a green path.
			For $\H^d$, $d\ge 2$,  an order can be chosen to be 
			hierarchical  on an appropriate tree.
		}
		\label{pic:tilingwithtree2}
	\end{figure}

	In subsection \ref{Subsection:nothyperbolic} we study (not necessarily hyperbolic) families of metric spaces with bounded $\OR$.

	{\bf Acknowledgements. } 
	We would like to thank 	Chien-Chung Huang and Kevin Schewior
	for references  on the travelling salesman  problem,  and Nick Ramsey for conversations on first order logic.
	This project has received funding from the European Research Council (ERC) under
	the European Union's Horizon 2020 
	research and innovation program (grant agreement
	No.725773).
	
	{\bf Data availability. } 
	Data sharing not applicable to this article as no datasets were generated or analysed during the current study.
	
	\section{Preliminaries and basic properties}\label{sec:firstexamples}
	
	In this section we describe basic properties of the order ratio function and order breakpoint.
	
	\begin{lemma}
		\label{prop:max_min_or}
		For any ordered metric space $(M, d, T)$ and any  $k \ge 1$
		it holds that
		$$
		1 \leq \OR_{M,T}(k) \leq k.
		$$
	\end{lemma}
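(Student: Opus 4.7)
The plan is to verify both inequalities directly from the definitions, treating each sum appearing in $l_T(X)$ and $l_{\opt}(X)$ term by term.

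For the lower bound $\OR_{M,T}(k) \ge 1$, I would fix any $X \subset M$ with $2 \le \#X \le k+1$ and observe that the path induced by $T$ (namely $x_1 \to x_2 \to \cdots \to x_m$ where $m = \#X$) is one specific path visiting every point of $X$. Since $l_{\opt}(X)$ is defined as the \emph{minimal} length over all such paths, we immediately get $l_T(X) \ge l_{\opt}(X)$, so the ratio is $\ge 1$. Taking the supremum over $X$ preserves this, and the convention for one-point spaces takes care of the degenerate case.

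For the upper bound $\OR_{M,T}(k) \le k$, the key observation is that every consecutive distance $d(x_i, x_{i+1})$ is itself bounded by $l_{\opt}(X)$. Indeed, an optimal path visits both $x_i$ and $x_{i+1}$, and by the triangle inequality along that path we have $d(x_i, x_{i+1}) \le l_{\opt}(X)$ (in particular $\diam(X) \le l_{\opt}(X)$). Since $l_T(X)$ is a sum of $m-1 \le k$ such consecutive distances, we obtain
\[
l_T(X) \;=\; \sum_{i=1}^{m-1} d(x_i, x_{i+1}) \;\le\; (m-1)\, l_{\opt}(X) \;\le\; k\, l_{\opt}(X),
\]
which gives $l_T(X)/l_{\opt}(X) \le k$ for every admissible $X$, and taking the supremum yields the stated bound.

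There is no serious obstacle here: the statement is a direct bookkeeping consequence of the definition of $l_{\opt}$ as a minimum and of the triangle inequality. The only mild care needed is to check the extreme cases (one-point $M$, or $X$ with exactly two points, for which $l_T(X) = l_{\opt}(X) = d(x_1,x_2)$ and the ratio equals $1$), all of which are consistent with the stated bounds $1 \le \OR_{M,T}(k) \le k$.
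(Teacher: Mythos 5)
Your proposal is correct and follows essentially the same route as the paper: the lower bound comes from $l_{\opt}$ being a minimum over paths including the $T$-path, and the upper bound from bounding each of the at most $k$ consecutive distances by the diameter, which is itself at most $l_{\opt}(X)$. The paper phrases this as $l_T(X)\le kL$ and $l_{\opt}(X)\ge L$ with $L=\diam(X)$, which is the same estimate you use.
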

	\begin{proof}
		The first inequality is obvious. 
		Let $X \subset M$, $\# X = k+1$ and let $L$ be the diameter of $X$. 
		Then $l_{T}(X) \leq k L$ and $l_{\opt}(X) \geq L$.   
	\end{proof}

	\begin{definition} \label{de:qi} Given metric spaces $N$ and $M$, a map $\alpha$  from $N$ to $M$ is a {\it  quasi-isometric embedding} if
		there exist $C_1 \geqslant 1$, $C_2 \geqslant 0$ such that for any $x_1, x_2 \in N$ it holds that
		\[
		\frac{1}{C_1}(d_N(x_1, x_2)) -C_2  \le   d_M(\alpha(x_1), \alpha(x_2)) \le C_1(d_N(x_1, x_2)) +C_2
		\]
		
		If $M$ is at bounded distance from $\alpha(N)$, this map is called a quasi-isometry, and the spaces $X$ and $Y$ are called {\it quasi-isometric.}
		If $\alpha$ is bijective and $C_2=0$, then the spaces are said to be  {\it bi-Lipschitz equivalent}.
	\end{definition}
	
	A significantly weaker condition is the following.
	
	\begin{definition}\label{de:ue}  Given  metric spaces $N$ and $M$, a map $\alpha$  from $N$ to $M$ is an
		{\it uniform embedding} (also called {\it coarse embedding})
		if there exist two non-decreasing functions $\rho_1$, $\rho_2:$ $[0, +\infty) \to [0, + \infty)$,
		with $\lim_{r\to \infty} \rho_1(r) = \infty$,  such that
		\[
		\rho_1 (d_X(x_1, x_2)) \leqslant d_M(\alpha(x_1), \alpha(x_2)) \le \rho_2 (d_X(x_1, x_2)).
		\]
	\end{definition}
	
	\begin{definition}\label{def:epsnet}
		We say that for $\varepsilon> 0$ a subset $U$ in a metric set $M$ is an $\varepsilon${\it -net} if any point of $M$ is at distance at most
		$\varepsilon$ from $U$.
	\end{definition}
	
	\begin{definition}\label{def:net}
		We say that for $\varepsilon, \delta > 0$ a subset $U$ in a metric set $M$ is an $(\varepsilon, \delta)${\it -net} if any point of $M$ is at distance at most
		$\varepsilon$ from $U$ and the distance between  any two distinct points of $U$ is at least $\delta$.
	\end{definition}
	
	It is clear that any $\varepsilon$-net of $M$ (in particular, any $(\varepsilon, \delta)$-net of $M$),  endowed with the restriction of the metric of $M$, is quasi-isometric to 
	$M$. 
	Indeed, its embedding to $M$ satisfies the condition of Definition \ref{de:qi} with $C_1=1$ and $C_2=0$. 
	Observe also  that $M$  is at distance of most $\varepsilon$ from  the image of this embedding.
	It is also   clear from the definition that any $(\varepsilon, \delta)$-net is uniformly discrete.
	For convenience of the reader, we recall the following well-known observation.
	
	\begin{rem} \label{re:separatednet}
		
		Let $M$ be a metric space. For any  $\varepsilon >0$ there exists an 
		$( \varepsilon, \varepsilon)$-net of $M$.
		
	\end{rem}
	
	\begin{proof}
		Consider a maximal, with respect to inclusion, subset $U$ of $M$ such that the distance between any two points is at least $ \varepsilon$ (in other words $U$ is a maximal with respect to inclusions $\varepsilon$-separated net of $M$).   The maximality of this set guarantees that any point $x \in M$ is at distance at most $\varepsilon$ from $M$.
	\end{proof}

	\begin{definition}
		Given (a not necessary injective) map $\phi: N \to M$ and an order $T_M$ on $M$ let us say that an order $T_N$ 
		on $N$ is a {\it pullback} of $T_M$ if the following holds: for any $x , x' \in N$ such that  $\phi(x) <_{T_M} \phi(x')$  we have
		$x<_{T_N} x'$. 
	\end{definition}
	
	Note that a pullback of an order always exists: to construct a pullback it is (necessary and) sufficient to fix an order on each preimage $\phi^{-1}(y)$ for $y\in M$.
	
	\begin{lemma}\label{le:pullback}
		Let $\alpha$ be a quasi-isometric embedding of a uniformly discrete space $N$ to a uniformly discrete space $M$, then there is a constant $K$ such that if \: $T$ is an order on $M$ and $T_N$ is its pullback on $N$, then for all $k \ge 1$ 
		\[
		\OR_{N,T_N} (k) \le K \cdot \OR_{M,T} (k).
		\]
		The constant $K$
		can be chosen depending only on the quasi-isometry constants of $\alpha$ and the uniform discreteness constants of $M$ and $N$.
	\end{lemma}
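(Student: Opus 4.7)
The plan is to push an arbitrary finite $X\subset N$ of size $k+1$ forward to $\alpha(X)\subset M$, apply the hypothesis on the order ratio of $T$ to the image, and pull the resulting estimate back to $N$ using the quasi-isometric bounds. Enumerate $X$ in $T_N$-order as $x_1\le_{T_N}\cdots\le_{T_N} x_{k+1}$ and set $y_i:=\alpha(x_i)$. By the defining property of the pullback, the sequence $(y_i)$ is weakly $T$-increasing; after deleting consecutive duplicates it is exactly $\alpha(X)$ listed in $T$-order, which is the sequence used to compute $l_T(\alpha(X))$.

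The first step is to compare $l_{T_N}(X)$ and $l_T(\alpha(X))$ via the quasi-isometric inequality $d_N(x_i,x_{i+1})\le C_1 d_M(y_i,y_{i+1})+C_1 C_2$. Summing the $k$ consecutive distances and observing that pairs with $y_i=y_{i+1}$ contribute zero to the $M$-side sum yields
\[
l_{T_N}(X)\ \le\ C_1\, l_T(\alpha(X))+k C_1 C_2 .
\]
This is the step where possible non-injectivity of $\alpha$ is handled cleanly: the pullback order groups each fiber consecutively, so collisions cost nothing on the $M$-side.

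Next, since $|\alpha(X)|\le k+1$, the definition of the order ratio function gives $l_T(\alpha(X))\le \OR_{M,T}(k)\cdot l_{\opt}(\alpha(X))$ (including the degenerate case when $\alpha(X)$ is a single point, where both sides vanish). Pushing an optimal tour of $X$ forward through $\alpha$ and applying the opposite quasi-isometric bound gives $l_{\opt}(\alpha(X))\le C_1\, l_{\opt}(X)+k C_2$. Chaining these three estimates produces
\[
l_{T_N}(X)\ \le\ C_1^2\,\OR_{M,T}(k)\, l_{\opt}(X)+k C_1 C_2\bigl(\OR_{M,T}(k)+1\bigr).
\]

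To absorb the additive $O(k)$ term into the desired multiplicative bound I would use uniform discreteness of $N$: the optimal tour of $X$ has $k$ edges each of length at least $\delta$, so $k\le l_{\opt}(X)/\delta$. Substituting and bounding $\OR_{M,T}(k)+1\le 2\,\OR_{M,T}(k)$ yields the claim with an explicit constant $K=C_1^2+2C_1 C_2/\delta$, depending only on $C_1$, $C_2$ and $\delta$. No substantive obstacle arises; the only slightly delicate point is the potential non-injectivity of $\alpha$, which is neutralized by the very design of the pullback order. It is worth noting that uniform discreteness of the target $M$ plays no role here --- it is discreteness of the source $N$ that converts the additive quasi-isometric error into a multiplicative one.
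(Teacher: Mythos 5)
Your proof is correct and follows essentially the same route as the paper's: push the set forward, apply $\OR_{M,T}$ to the image (using monotonicity of $\OR$ and the convexity of fibers under the pullback order), pull the estimate back via the quasi-isometric bounds, and absorb the additive and collision terms using uniform discreteness together with $k\le l_{\opt}(X)/\delta$. The only difference is bookkeeping: the paper first upgrades the quasi-isometric inequalities to a bi-Lipschitz-type comparison using discreteness of both spaces, whereas you carry the additive constants along and absorb them at the end using discreteness of the source $N$ only, which is a correct minor sharpening of the dependence on $\delta$ claimed in the statement.
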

	
	\begin{proof}
		There exists $\delta > 0$ such that all distances between distinct points in $N$ and $M$ are greater than $\delta$. 
		Since $\alpha$ is a quasi-isometry  and  since $N$ and $M$ are  uniformly  	discrete, we know  that there exists $C$ such that
		\[\frac{1}{C} d_{M}(\alpha(x_1), \alpha(x_2)) \le  d_{N}(x_1, x_2) \le C d_{M}(\alpha(x_1), \alpha(x_2))
		\]
		if $\alpha(x_1) \neq \alpha(x_2)$, and $d_N(x_1, x_2) < C$ if $\alpha(x_1) = \alpha(x_2)$.
		
		Let $A \subset N$, $\# A = k+1$, $B = \alpha(A)$, $\# B = n+1$, $n \le k$.
		Let $L_1= l_{\opt}(B)$, $L_2=l_{T}(B)$.
		We know that $L_2 \leqslant L_1 \cdot \OR_{M,T}(n) \leqslant L_1 \cdot \OR_{M,T}(k)$, because the $\OR$ function is non-decreasing.
		If some path goes through all points  of $A$, the corresponding path in $M$ goes through all points of $B$. 
		Hence $l_{\opt}(A) \ge \frac{1}{C}L_1$.
		The path that enumerates $A$ according to the order $T_N$ consists of $n-1$ preimages of edges of $l_T(B)$ and $k-n$ edges with one point image.
		The first group of edges gives a contribution of $\le C L_2$ towards $l_{T_N}(A)$, and the second group of edges gives $\leq C(k-n) \leq C k$.
		We also know that $k < \frac{l_{\opt}(A)}{\delta}$.
		Putting these inequalities together, we obtain
		\[l_{T_N}(A) \le C^2 \cdot \OR_{M,T}(k) l_{\opt}(A) + \frac{C}{\delta} l_{\opt}(A).
		\]
		Therefore
		\[
		\OR_{X,T_N}(k) \leq C^2 \cdot \OR_{M,T}(k) + \frac{C}{\delta},
		\]
		and we can take $K(\alpha) = C^2 + \frac{C}{\delta}$.
	\end{proof}
	
	Lemma \ref{le:pullback} above  allows  to speak about the order ratio invariant of a metric space.
	
	In the definition below we say that two functions $f_1,f_2 :\mathbb{N} \to \mathbb{R}$ are equivalent up to a multiplicative constant if  there exists $K_1,K_2>0$ such that $f_1(r) \le K_1 f_2(r)$ and $f_2(r) \le K_2 f_1(r)$  for all $r\ge 1$.

	\begin{definition}\label{def:ORI}
		For a metric space $(M, d)$ its {\it order ratio invariant} $\ORI_{M}$ 
		is defined to be the equivalence class, with respect to equivalence up to a multiplicative constant, 	of functions  $\OR_{M'}$, where $M'$ is some $(\varepsilon, \delta)$-net of $M$.
	\end{definition}
	
	In view of Lemma \ref{le:pullback} we obtain the following.
	
	\begin{lemma} \label{le:qi}
		For any metric space $M$, $\ORI_M$ is well-defined, and it is a quasi-isometric invariant of a metric space.	
	\end{lemma}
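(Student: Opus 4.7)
The plan is to deduce Lemma \ref{le:qi} directly from Lemma \ref{le:pullback}, applied twice (once in each direction) to compare the order ratio functions of two nets. The key observation is that Lemma \ref{le:pullback} produces a multiplicative constant $K$ depending only on the quasi-isometry constants and the uniform discreteness constant, so once we exhibit quasi-isometric imbeddings of uniformly discrete spaces going both ways, we automatically obtain equivalence up to a multiplicative constant.

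First I would address well-definedness. Given two nets $M'_1$ and $M'_2$ of $M$, say an $(\varepsilon_1,\delta_1)$- and an $(\varepsilon_2,\delta_2)$-net, each is uniformly discrete and quasi-isometric to $M$ via its inclusion. Choose a nearest-point map $\pi: M'_1 \to M'_2$ (send each $x\in M'_1$ to some point of $M'_2$ at distance $\le \varepsilon_2$). Composing the isometric inclusion $M'_1 \hookrightarrow M$ with $\pi$ yields a quasi-isometric imbedding of uniformly discrete spaces, so for any order $T$ on $M'_2$ the pullback $T'$ on $M'_1$ satisfies $\OR_{M'_1,T'}(k) \le K_1 \cdot \OR_{M'_2,T}(k)$ by Lemma \ref{le:pullback}. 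Taking the infimum over $T$ gives $\OR_{M'_1}(k) \le K_1\cdot \OR_{M'_2}(k)$, and symmetrically $\OR_{M'_2}(k) \le K_2 \cdot \OR_{M'_1}(k)$. Hence the equivalence class of $\OR_{M'}$ does not depend on the choice of $(\varepsilon,\delta)$-net.

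Next I would establish quasi-isometric invariance. Let $\alpha: M \to N$ be a quasi-isometry, and let $M'$, $N'$ be nets in $M$ and $N$ respectively. Compose $M' \hookrightarrow M \xrightarrow{\alpha} N \xrightarrow{\pi} N'$, where $\pi$ is a nearest-point map to $N'$. This is a composition of quasi-isometric imbeddings, hence itself a quasi-isometric imbedding between uniformly discrete spaces, so Lemma \ref{le:pullback} gives $\OR_{M'}(k) \le K \cdot \OR_{N'}(k)$. Choosing a quasi-inverse $\alpha': N \to M$ of $\alpha$ and applying the same construction produces the reverse inequality, so $\ORI_M$ and $\ORI_N$ coincide as equivalence classes.

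There is essentially no serious obstacle: everything reduces to verifying that the maps we construct are quasi-isometric imbeddings of uniformly discrete spaces, which is immediate from the net condition and standard composition properties, plus the fact that Lemma \ref{le:pullback} permits non-injective maps (as the nearest-point projections may collapse points). The only point deserving explicit mention is that the constants produced do not depend on $k$, since Lemma \ref{le:pullback} builds $K$ only from the quasi-isometry and uniform discreteness constants of the spaces involved.
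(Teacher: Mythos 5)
Your proposal is correct and follows the same route as the paper: the paper's proof likewise observes that any two $(\varepsilon,\delta)$-nets of $M$ (and nets of quasi-isometric spaces) are quasi-isometric uniformly discrete spaces and then invokes Lemma \ref{le:pullback} in both directions to get equivalence up to a multiplicative constant. Your write-up merely makes explicit the nearest-point maps and the passage to the infimum over orders, which the paper leaves implicit.
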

	
	\begin{proof}
		If $M_1$ and $M_2$ are $(\varepsilon, \delta)$-nets of $M$, they are quasi-isometric and from Lemma \ref{le:pullback} it follows that $\OR_{M_1}$ and $\OR_{M_2}$ are equivalent up to a multiplicative constant.
		If two spaces are quasi-isometric then their $(\varepsilon, \delta)$-nets are also quasi-isometric.
		
	\end{proof}
	
	While specific values of the order ratio function can change under quasi-isometries, as can be easily seen already  from finite space examples, 
	the equality  $\OR(s) = s$ for given $s$ (the maximal possible value, see Lemma \ref{prop:max_min_or}) is preserved under quasi-isometries, as we will see in Lemma \ref{le:snakesqi}.
	
	Consider a sequence of  $s$  points: $x_1 <_T x_2 <_T \dots <_T x_s$.
	
	\begin{definition} \label{def:snake}
		We call such sequence to be  {\it a snake} on  $s$ points, of diameter $a$ and of  {\it width} $b$,
		if the diameter of this set is $a$ and the maximal  distance $d(x_i, x_j)$ where $i$ and $j$ are of the same parity, is $b$.
		For a snake on at least two points, 
		we say that the ratio $a/b$ is the {\it elongation} of the snake.
		If $s=2$, then the width of the snake is $0$, and we say in this case that the elongation is infinite.
	\end{definition}

	\begin{lemma}\label{lem:snake} Let $M$ be a metric space, containing at least two points, 
		and let $T$ be an order on $M$ and consider
		$s\geq 1$. 
		The following conditions are equivalent:
		\begin{enumerate}
			\item $\OR_{M,T}(s) = s$
			\item There exist snakes on  $s+1$ points in $(M,T)$ of arbitrarily large elongation.
		\end{enumerate}
	\end{lemma}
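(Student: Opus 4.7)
The plan is to prove both implications, with $(2) \Rightarrow (1)$ a direct triangle-inequality estimate and $(1) \Rightarrow (2)$ a bootstrap from the three-point case.

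For $(2) \Rightarrow (1)$, fix a snake $x_1 <_T \dots <_T x_{s+1}$ of diameter $a$ and width $b$. As soon as $a > b$, the diameter must be realized by an opposite-parity pair $p = x_{i_0}$, $q = x_{j_0}$, since only same-parity distances are bounded by $b$. Every odd-indexed point lies within $b$ of $p$ and every even-indexed one within $b$ of $q$, so by the triangle inequality each consecutive-in-order distance satisfies $d(x_i, x_{i+1}) \ge a - 2b$, giving $l_T(X) \ge s(a-2b)$. Conversely the odd (resp.\ even) sub-cluster has diameter at most $b$ and so admits a Hamiltonian path of length at most $b(k-1)$; concatenating the two clusters via one edge of length $\le a$ gives $l_{\opt}(X) \le a + (s-1)b$. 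Hence the ratio is at least $s(a-2b)/(a + (s-1)b)$, which tends to $s$ as $a/b \to \infty$; combined with the universal upper bound $\OR_{M,T}(s) \le s$ of Lemma \ref{prop:max_min_or}, this yields $\OR_{M,T}(s) = s$.

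For $(1) \Rightarrow (2)$, assume $\OR_{M,T}(s) = s$ and for each small $\eta > 0$ pick $X$ realizing ratio at least $s - \eta$. Lemma \ref{prop:max_min_or} applied to subsets of size $k \le s$ gives ratio at most $k - 1 \le s - 1$, so necessarily $\#X = s+1$. After rescaling so the diameter of $X$ equals $1$, the bound $l_{\opt}(X) \ge 1$ forces $l_T(X) \ge s - \eta$; since each of the $s$ summands is at most $1$, every consecutive-in-order distance satisfies $d(x_i, x_{i+1}) \ge 1 - \eta$, while $l_{\opt}(X) \le l_T(X)/(s-\eta) \le 1 + O(\eta)$. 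The key auxiliary step is the three-point case: for any $y_1 <_T y_2 <_T y_3$ in a set of diameter at most $1$ with $d(y_1,y_2), d(y_2,y_3) \ge 1 - \eta$ and $l_{\opt}(\{y_1,y_2,y_3\}) \le 1 + O(\eta)$, the elementary identity that $l_{\opt}$ of three points equals the sum of the two smallest pairwise distances forces $d(y_1, y_3) = O(\eta)$. Applying this to each consecutive triple $\{x_{i-1}, x_i, x_{i+1}\} \subset X$ --- legitimate because shortcutting a Hamiltonian path by the triangle inequality cannot increase its length, so $l_{\opt}(X') \le l_{\opt}(X)$ --- we obtain $d(x_{i-1}, x_{i+1}) = O(\eta)$ for every $i$. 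A telescoping triangle inequality along the odd subsequence $x_1, x_3, x_5, \dots$ and the even subsequence $x_2, x_4, \dots$ then bounds every same-parity distance by $(s/2) \cdot O(\eta) = O(s\eta)$, so $X$ is a snake whose elongation is at least $1/O(s\eta)$, which diverges as $\eta \to 0$ with $s$ fixed.

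The main obstacle is organizing this bootstrap so that the three-point estimate applies uniformly to every consecutive triple of $X$ with the same $O(\eta)$ control. This requires the two subset-monotonicity facts used above --- consecutive-in-order distances remain large on sub-windows, and $l_{\opt}$ is monotone under restriction --- together with careful tracking of how errors accumulate through the telescoping chain of length $O(s)$. Both are handled because $s$ is fixed while $\eta$ is sent to zero, so the factor $s$ in $O(s\eta)$ never competes with the smallness of $\eta$.
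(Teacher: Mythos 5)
Your proposal is correct and follows essentially the same route as the paper: the $(2)\Rightarrow(1)$ estimate via $l_T(X)\ge s(a-2b)$ and $l_{\opt}(X)\le a+(s-1)b$ is identical, and the $(1)\Rightarrow(2)$ direction uses the same ingredients (near-optimal $(s+1)$-point sets, consecutive distances close to the diameter, the two-smallest-distances characterization of $l_{\opt}$ on consecutive triples to force $d(x_i,x_{i+2})$ small, then a width bound) as the paper's proof. Your explicit remarks that the extremal sets must have exactly $s+1$ points and that $l_{\opt}$ is monotone under taking subsets are points the paper leaves implicit, but they do not change the argument.
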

	\begin{proof}
		$(2)\to (1)$. Let $S = \{x_1, \dots, x_{s+1}\}$ be a snake of diameter  $a$ and width $b$.  
		From the triangle inequality it follows that the distance between any consecutive points $d(x_i, x_{i+1})$ is greater than or equal to  $a - 2b$.  This shows that  $l_T(S) \geqslant s(a - 2b)$. 
		Observe also that $l_{\opt} (S) \leqslant a + (s-1)b$, because we can pass first through
		all points with odd indexes and then all points with even indexes.
		Hence
		\[\OR_{M,T}(s) \geqslant \frac{s(a-2b)}{a + (s-1)b}
		\]
		that tends to $s$ as $b/a$ tends to $0$.
		The other inequality $\OR_{M,T}(s) \le s$ comes from Lemma \ref{prop:max_min_or}.
		
		\medskip
		\noindent$(1)\to (2)$. 
		We can assume that $s \geq 2$.
		Let $0 < \varepsilon < 1/(2s+2)$, $S = \{x_1, \dots, x_{s+1}\}$  and
		$l_{T}(S) \ge s(1 - \varepsilon) l_{\opt} (S)$.
		
		Since $l_T(S) \leqslant s\diam(S)$ and 
		$l_{\opt} \geqslant \diam(S)$, we conclude that 
		\begin{itemize}
			\item $s\diam(S)(1-\varepsilon) \leqslant l_T(S)\leqslant s\diam(S)$ and
			\item $\diam(S) \leqslant l_{\opt}(S) \leqslant \diam(S)/(1-\varepsilon)$.
		\end{itemize} 
		The first inequality implies that for all $i$, 
		$d(x_i, x_{i+1})$ is close to the diameter of $S$: $d(x_i, x_{i+1}) \ge \diam(S)(1-s\varepsilon)$.
		
		The second inequality implies that for any triple of points of  $S$, the minimal sum of two pairwise distances  is at most  $\diam(S)/(1-\varepsilon)$.
		If $\varepsilon < 1/2$, then 
		$1/(1-\varepsilon) < 1 + 2\varepsilon$.
		
		Applying these inequalities for the triple
		$(x_{i},x_{i+1},x_{i+2})$, we observe that
		$d(x_i, x_{i+2}) < (2+s)\varepsilon \diam(S) \leq 2s\varepsilon \diam(S)$.
		Observe that the shortest path joining $x_i$, $x_{i+1}$, $x_{i+2}$ can not be the path visiting them in this order $x_i$, $x_{i+1}$, $x_{i+2}$. 
		Indeed, the sum $d(x_i, x_{i+1}) + d(x_{i+1}, x_{i+2})$ can not be the minimal sum of two pairwise distances from the triple, because $d(x_i, x_{i+1}) + d(x_{i+1}, x_{i+2}) \ge \diam(S)(2-2s\varepsilon) > \diam(S)(1 + 2\varepsilon)$.

		This implies  that the width of the snake $S$ is at most 
		$2s^2\varepsilon \diam(S)$,
		and its elongation is greater than of equal than  $1/(2s^2\varepsilon)$.
	\end{proof}
	
	Since a snake on $k$ points contains snakes on fewer points, it is clear that 
	$\OR_{M,T}(k) = k$ 
	implies  $\OR_{M,T}(a) = a$ for all integers $a < k$.
	
	\begin{lemma}\label{le:snakesqi}[Snakes, Order Breakpoint and quasi-isometries]
		\begin{enumerate}
			\item
			Let an integer $k$ be  $\ge 2$, let $N$ be a uniformly discrete space with $\OR_N(k) = k$ and let $\alpha$ be a quasi-isometric embedding of $N$ to a metric space $M$. 
			Then $\OR_M(k) = k$.
			\item In particular, if $N$ and $M$ are quasi-isometric uniformly discrete metric spaces, then 
			$\mins(N) = \mins(M)$.
		\end{enumerate}
	\end{lemma}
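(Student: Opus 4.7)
My plan is to prove Part (1) through the snake criterion of Lemma \ref{lem:snake}. Fix an arbitrary order $T$ on $M$ and let $T_N$ be its pullback under $\alpha$. Since $\OR_N(k) = \inf_{T'} \OR_{N,T'}(k) = k$ and every $\OR_{N,T'}(k)$ is bounded above by $k$ by Lemma \ref{prop:max_min_or}, this particular pullback order must satisfy $\OR_{N,T_N}(k) = k$. The direction $(1) \Rightarrow (2)$ of Lemma \ref{lem:snake} then provides snakes $\{x_1, \ldots, x_{k+1}\}$ in $(N, T_N)$ of diameter $a$, width $b$, and elongation $a/b$ that can be made arbitrarily large. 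For $k \geq 2$ the snake has at least two same-parity points, so uniform discreteness of $N$ gives $b \geq \delta > 0$, whence elongation going to infinity forces $a \to \infty$. The case $k=1$ is trivial since $\OR(1) \equiv 1$.

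The key step, and the one I expect to be the main subtlety, is to show that $\alpha$ is injective on such a snake once $a$ is sufficiently large. Suppose $\alpha(x_i) = \alpha(x_j)$ for some $i < j$ in the snake. Applying the pullback condition along the chain $x_i <_{T_N} x_{i+1} <_{T_N} \cdots <_{T_N} x_j$ yields $\alpha(x_i) \leq_T \alpha(x_{i+1}) \leq_T \cdots \leq_T \alpha(x_j)$, and the equality at the endpoints collapses every intermediate inequality to an equality. In particular $\alpha(x_i) = \alpha(x_{i+1})$, so the quasi-isometric imbedding inequality forces $d_N(x_i, x_{i+1}) \leq C_1 C_2$. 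On the other hand, consecutive distances in a snake satisfy $d_N(x_i, x_{i+1}) \geq a - 2b$, as observed in the proof of Lemma \ref{lem:snake}. Choosing the snake with $a/b$ exceeding $2 + C_1 C_2/\delta$, which is possible by the arbitrary-elongation conclusion of that lemma, guarantees $a - 2b > C_1 C_2$ and yields a contradiction.

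With injectivity in hand, the images $\alpha(x_1), \ldots, \alpha(x_{k+1})$ are strictly $T$-ordered (again by the pullback property) and form a $(k+1)$-point sequence in $(M, T)$ whose diameter is at least $a/C_1 - C_2$, using the pair in $N$ realizing the diameter $a$, and whose width is at most $C_1 b + C_2$, since same-parity pairs in $N$ lie at distance at most $b$. The resulting image elongation is at least $(a/C_1 - C_2)/(C_1 b + C_2)$, which tends to infinity as $a \to \infty$ and $a/b \to \infty$, regardless of whether $b$ stays bounded. Applying direction $(2) \Rightarrow (1)$ of Lemma \ref{lem:snake} gives $\OR_{M,T}(k) = k$, and since $T$ was arbitrary, $\OR_M(k) = k$.

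For Part (2), a quasi-isometry between the uniformly discrete spaces $N$ and $M$ provides quasi-isometric imbeddings in both directions. Applying Part (1) to each then gives $\OR_N(s) = s \iff \OR_M(s) = s$ for every $s$. Since $\mins(M)$ is the smallest $s$ with $\OR_M(s) < s$ (and similarly for $N$), the two order breakpoints agree.
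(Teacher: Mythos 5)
Your proposal is correct and follows essentially the same route as the paper's proof: pull back an arbitrary order from $M$, note that $\OR_{N,T_N}(k)=k$ forces snakes of arbitrarily large elongation via Lemma \ref{lem:snake}, use uniform discreteness to push diameters to infinity, and transfer the snakes through the quasi-isometric imbedding to contradict (or directly establish) $\OR_{M,T}(k)=k$. Your explicit verification that $\alpha$ is injective on a snake of large elongation (via the order-chain collapse plus the lower quasi-isometry bound) is a detail the paper passes over silently, and it is a welcome addition rather than a divergence.
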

	
	\begin{proof} 
		(1) Observe that if $\OR_N(k)=k$, then by definition of order ratio function we have $\OR_{N,T}(k)=k$ for any order $T$ on $N$.
		If $\OR_M(k) \ne k$, then there exist an order $T_M$ such that $\OR_{M,T_M}(k) < k$. Let $T$ be a pullback of $T_M$. 		
		Since $\OR_{N,T}(k)=k$, we know that  $(N,T)$ admits a sequence of snakes on $k+1$ points with elongations that tend to infinity.
		
		A snake $S$ on $k+1$ points in $(N,T)$ with large enough elongation  maps to a snake $\alpha(S)$ on $k+1$ points in $(M, T_M)$, 
		and in particular these points are distinct. The argument is the same as in the beginning of the proof of Lemma \ref{lem:uniform} mutatis mutandis.
		
		Since $\alpha$ is a quasi-isometry, the length of $\alpha(S)$ is bounded from below by some affine function of the length of $S$, and the width of $\alpha(S)$ is bounded from above by some affine function of the width of $S$.
		
		Because of the uniform discreteness, the widths of these snakes in $(N,T)$ are uniformly separated from zero, so their diameters tend to infinity. 
		The diameter of $\alpha(S)$ is bounded  below by some affine function of the diameter of $S$, 
		and the width of $\alpha(S)$ is bounded above by some affine function of the width of $S$.
		As $N$ is uniformly discrete, and by restricting to snakes $S$ of large diameter, we can choose these affine functions to have ho additive constants. Since the elongations of the snakes $S$ tend to infinity, it follows that the elongations of the snakes $\alpha(S)$ tend to infinity, and this is in contradiction with the fact that   $\OR_{M,T_M}(k) < k$.
		
		(2) Follows from (1), because $\mins(M)$ is the smallest integer $s$ such that $\OR_M(s) < s$.
	\end{proof}

	Now we introduce a strengthening of the condition $\OR(k)=k$. 
	In contrast with the property of admitting
	snakes of large elongation, this property will be inherited by the images 
	of uniform embeddings.
	\begin{definition} \label{def:strongsnakes}
		We say that an ordered metric space $(M,T)$ admits {\it an $(\infty, b)$-sequence of snakes} on $s$ points if it admits a sequence of
		snakes on $s$ points, in which the diameters of snakes tend to $\infty$ and the widths of all snakes are $\le b$.
		If a sequence of snakes on $s$ points is an $(\infty, b)$-sequence for some $b$, we say that this sequence of snakes is an $(\infty, \rm{Bounded})$-sequence. 
	\end{definition}
	
	This notion allows us to obtain estimates for $\mins$ in the case when one metric space is uniformly embedded in another one.
	
	\begin{lemma} \label{lem:uniform}
		Let $N$ and $M$ be metric spaces.
		Let $\alpha$ be a uniform embedding of $N$ to $M$, $T_M$ be an order on $M$, and $T_N$ be its pullback on $N$. 
		Then if for some $k\ge 1$ the ordered space $(N,T_N)$ admits an $(\infty, \rm{Bounded})$-sequence of  snakes on $k+1$ points, then $(M,T_M)$ also admits such sequence of snakes and
		$\OR_{M,T_M}(k)= k$.
	\end{lemma}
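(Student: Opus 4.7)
The plan is to push the given snakes in $N$ forward through $\alpha$ and verify that the images form a sequence of snakes of bounded width in $(M,T_M)$; the equality $\OR_{M,T_M}(k)=k$ will then follow from Lemma \ref{lem:snake}.

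First, fix a sequence of snakes $S_n=\{x_1^{(n)}<_{T_N}\dots<_{T_N}x_{k+1}^{(n)}\}$ in $N$ with width $\le B$ and diameter $a_n\to\infty$. The key structural observation about a snake of width $b$ and diameter $a>b$ is that the pair realizing the diameter must have indices of different parity, and hence for any two indices $i,j$ of different parity,
\[
d_N(x_i^{(n)},x_j^{(n)})\ge a_n-2B,
\]
by the triangle inequality applied to the diameter-realizing pair and two same-parity connections of width at most $B$. In particular, consecutive distances $d_N(x_i^{(n)},x_{i+1}^{(n)})$ tend to infinity.

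Next I set $y_i^{(n)}:=\alpha(x_i^{(n)})$ and use the uniform imbedding bounds. Since same-parity points satisfy $d_N(x_i^{(n)},x_j^{(n)})\le B$, we get $d_M(y_i^{(n)},y_j^{(n)})\le\rho_2(B)$, which will bound the width of the image snake. Different-parity pairs satisfy $d_M(y_i^{(n)},y_j^{(n)})\ge\rho_1(a_n-2B)\to\infty$, which in particular forces, for all sufficiently large $n$, that $y_i^{(n)}\neq y_{i+1}^{(n)}$ for each $i$, and gives a diameter tending to infinity for the image.

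The step I expect to require the most care is verifying that for large $n$ all $k+1$ image points are genuinely distinct and appear in the order prescribed by $T_M$. The pullback property applied to $x_i^{(n)}<_{T_N}x_j^{(n)}$ gives $y_i^{(n)}\le_{T_M}y_j^{(n)}$ (the strict opposite inequality would contradict the definition of pullback). So the image sequence is non-decreasing in $T_M$. If two non-adjacent image points coincided, say $y_p^{(n)}=y_q^{(n)}$ with $p<q$, then every $y_m^{(n)}$ with $p\le m\le q$ would be sandwiched and hence equal to $y_p^{(n)}$; in particular $y_p^{(n)}=y_{p+1}^{(n)}$, contradicting the distinctness of consecutive images shown above. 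Thus for large $n$ the images form a strictly increasing sequence in $T_M$, i.e.\ an honest snake on $k+1$ points, of width at most $\rho_2(B)$ and diameter at least $\rho_1(a_n-2B)\to\infty$.

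This proves the first assertion. For the second, the sequence of snakes just produced has bounded width and unbounded diameter, so its elongation tends to infinity; Lemma \ref{lem:snake} then gives $\OR_{M,T_M}(k)=k$.
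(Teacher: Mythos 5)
Your proof is correct and follows essentially the same route as the paper: push the bounded-width snakes forward through the uniform imbedding, check the image is again a sequence of snakes of bounded width with diameters tending to infinity, and conclude via Lemma \ref{lem:snake}. The only difference is that you spell out the details (the $a_n-2B$ lower bound on different-parity pairs, the $\rho_1,\rho_2$ estimates, and the distinctness/strict $T_M$-ordering of the image points via the pullback property) which the paper's one-line proof leaves implicit.
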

	
	\begin{proof} 
		Let $K$ be such that any two points at distance $\geq K$ in $N$ map to a pair of distinct points in $M$.
		Let $X=(x_1 <_{T_N} \dots <_{T_N} x_{k+1})$ be a snake in $N$ of diameter $a$ and width $b$ and let $a > 2b + K$. 
		Note that $\alpha(x_i)\neq \alpha(x_j)$ for any $i$ and $j$ of different parity.
		This means that 
		$$
		\alpha(x_1) <_{T_M} 
		\dots <_{T_M} \alpha(x_{k+1}), $$ 
		hence $\alpha(X)$ is a snake on $k+1$ points in $M$.
		
		Consider  an $(\infty, \rm{Bounded})$-sequence of  snakes in $(N, T_N)$.
		Observe that its image under a uniform embedding is a sequence of  $(\infty, \rm{Bounded})$-snakes in $(M, T_M)$.
		We can conclude therefore that $M$ admits snakes on $k+1$ points of arbitrarily large elongation, and hence by Lemma \ref{lem:snake} we know that 
		$\OR_{M,T_M}(k)= k$.
	\end{proof}
	
	The following lemma implies that the order ratio function of a metric space  is defined by the  order ratio functions of all its finite subsets.
	
	\begin{lemma}\label{rem:goedel}
		Let $M$ be a metric space. Consider a  function $F:\mathbb{Z}_+ \to \mathbb{R}_+$ and assume that for any finite subset $M' \subset M$ there exists an order $T'$ satisfying
		$\OR_{M', T'} (k) \le F(k)$ for all $k\ge 1$.
		Then there exists an order $T$ on $M$ satisfying 
		$\OR_{M, T} (k) \le F(k)$ for all $k\ge 1$.
	\end{lemma}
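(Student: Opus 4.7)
The plan is a compactness argument on the space of binary relations on $M$. Equip $\{0,1\}^{M \times M}$ with the product topology; by Tychonoff this space is compact. The subset $\mathcal{T} \subset \{0,1\}^{M \times M}$ consisting of total orders is defined by the axioms of totality, antisymmetry, and transitivity, each of which is a conjunction, indexed by pairs or triples of points, of clopen cylinder conditions. Thus $\mathcal{T}$ is closed, hence compact.

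For each finite subset $X \subset M$ with $\#X \ge 2$, let
\[
C_X \;=\; \bigl\{ T \in \mathcal{T} \ :\ l_T(X) \le F(\#X - 1)\cdot l_{\opt}(X) \bigr\}.
\]
Since $l_T(X)$ depends only on the restriction of $T$ to the finite set $X \times X$, the set $C_X$ is a finite union of cylinder sets, hence clopen in $\mathcal{T}$. The conclusion $\OR_{M,T}(k) \le F(k)$ for every $k \ge 1$ is exactly the statement that $T \in \bigcap_X C_X$, with the intersection taken over all finite $X \subset M$ of size at least $2$.

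The key step is to verify the finite intersection property for the family $\{C_X\}$. Given finitely many subsets $X_1, \dots, X_n$, put $Y = X_1 \cup \dots \cup X_n$, which is again a finite subset of $M$. By hypothesis there exists an order $T_Y$ on $Y$ with $\OR_{Y, T_Y}(k) \le F(k)$ for all $k \ge 1$. Using Szpilrajn's theorem (or directly Zorn's lemma applied to partial orders on $M$ extending $T_Y$), extend $T_Y$ to a total order $T$ on $M$. For each $i$ we have $l_T(X_i) = l_{T_Y}(X_i) \le F(\#X_i - 1)\cdot l_{\opt}(X_i)$, so $T \in \bigcap_{i=1}^n C_{X_i}$.

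By compactness of $\mathcal{T}$, the intersection $\bigcap_X C_X$ is non-empty, and any order $T$ in it is the required one. There is no genuine obstacle here; the only thing to check carefully is that the global condition $\OR_{M,T}(k) \le F(k)$ for all $k$ indeed decomposes into a family of conditions each of which depends on only finitely many coordinates of $T$, so that the topological setup applies. The label ``goedel'' reflects that this is a direct instance of the compactness theorem, whether phrased topologically as above or as the compactness theorem of propositional (or first-order) logic applied to the theory describing the desired order.
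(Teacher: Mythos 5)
Your proposal follows the same route as the paper: the paper invokes the compactness theorem of first-order logic, and your Tychonoff argument on $\{0,1\}^{M\times M}$, with clopen constraints indexed by finite subsets and the finite intersection property supplied by the hypothesis plus a Szpilrajn extension, is the topological form of exactly that argument. The structure is sound, but there is one slip in the encoding. The claim that $\OR_{M,T}(k)\le F(k)$ for all $k$ is \emph{exactly} the condition $T\in\bigcap_X C_X$ fails for a general (non-monotone) $F$: membership in $C_X$ bounds only $l_T(X)$ by $F(\#X-1)\,l_{\opt}(X)$, so from $T\in\bigcap_X C_X$ you can only conclude $\OR_{M,T}(k)\le\max_{1\le j\le k}F(j)$, which need not be $\le F(k)$. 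Concretely, take three collinear points $a,b,c$ with $d(a,b)=d(b,c)=1$, $d(a,c)=2$, and $F(1)=1$, $F(2)=2$, $F(k)=1.2$ for $k\ge 3$: the hypothesis holds, the order $a<_T c<_T b$ lies in $\bigcap_X C_X$, yet $\OR_{M,T}(3)=3/2>F(3)$, so your final sentence ``any order in it is the required one'' does not follow as stated.

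The repair is one line. Since every order ratio function is non-decreasing in $k$, the hypothesis with $F$ is equivalent to the hypothesis with $\tilde F(k):=\inf_{j\ge k}F(j)$, and the conclusion with $\tilde F$ implies the conclusion with $F$; so you may assume $F$ non-decreasing from the start, or equivalently define $C_X$ using the bound $\tilde F(\#X-1)\,l_{\opt}(X)$. Your finite intersection argument goes through verbatim with this strengthening, again because $\OR_{Y,T_Y}$ is non-decreasing in $k$. The paper's sketch sidesteps the issue by forbidding, for each finite $X$, those restricted orders whose entire function $\OR_{X,\cdot}$ exceeds $F$ somewhere, rather than bounding only the single ratio of the set $X$ itself; with that (or the above) adjustment your proof is complete and matches the paper's.
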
 
	
	\begin{proof}
		
		Indeed, we can take one constant symbol for each point of $M$.
		Observe that for a binary predicate ``$<$'', the fact that it defines an order  can be described by first order sentences.
		Also for any finite subset $X\subset M$ we forbid all its orders $T_X$ for which $\OR_{X,T_X}(k) > F(k)$ for some $k$, this can also be described by a set of first order formulas. 
		Then the statement of this lemma follows from the Compactness theorem.
	\end{proof}
	
	Note that for any finite metric space its $\mins$ is equal to $2$. 
	This means that the inequality $\mins(M) \leq s$ does not follow from inequalities $\mins(M') \leq s$ for finite subsets of $M$.
	To prove that $\mins(M)\leq s$ you  also have to provide for all finite $M'\subset M$ some uniform bound for elongations of snakes on $s+1$ points.
	
	\medskip
	
	Now we give one more definition that will be used in later sections.
	
	\begin{definition}\label{def:convex}
		Let $M$ be a set, $T$ be an order on  $M$ and $A\subseteq M$ be a subset.
		We say that $A$ is \emph{convex} with respect to  $T$ if for all  $x_1, x_2, x_3 \in M$ such that  
		$x_1 <_T x_2 <_T x_3$ it can not be that $x_1, x_3 \in A$ and $x_2 \not \in A$.
	\end{definition}

	We give a lemma that 
	provides a sufficient condition for the existence of an order where the sets of some given family are ``convex''.
	
	\begin{lemma} \label{le:convex} 
		Let  $\mathcal{A}$ be a family of subsets of  $M$ such that for
		any two sets $A_1, A_2 \in \mathcal{A}$ either their intersection is empty, 
		or one of the sets is contained in another one.
		Then there exists an order $T$ on $M$ such that all sets in $\mathcal{A}$ are convex with respect to $T$.
	\end{lemma}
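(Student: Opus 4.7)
The plan is to exhibit an explicit $T$ using the tree structure that a laminar family possesses, reducing the infinite case to a finite one via the first-order compactness argument already used in Lemma \ref{rem:goedel}. So first I would reduce to the case where $M$ is finite: convexity of each $A \in \mathcal{A}$ is expressible by first-order axioms in the language with a single binary symbol $<$ and a constant for each point of $M$ (saying that $<$ is a total order, together with, for each triple with $x,z \in A$, $y \notin A$, forbidding $x<y<z$ and $z<y<x$). Any finite fragment of these axioms mentions only finitely many points $M'\subseteq M$ and finitely many sets of $\mathcal{A}$, and is satisfied as soon as one produces a linear order on $M'$ making every set of the (still laminar) induced family $\{A\cap M':A\in\mathcal{A}\}$ convex. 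Compactness thus reduces the lemma to the finite case.

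Now assume $M$ is finite. Enlarge the family to $\mathcal{A}^\ast := \mathcal{A}\cup\{M\}\cup\{\{x\}:x\in M\}$; this remains laminar, and convexity of every element of $\mathcal{A}^\ast$ implies convexity of every element of $\mathcal{A}$. Because $\mathcal{A}^\ast$ is finite, every $A\in\mathcal{A}^\ast\setminus\{M\}$ has a well-defined \emph{parent} $\pi(A)$: the elements of $\mathcal{A}^\ast$ that strictly contain $A$ all share a point with $A$, so by the laminar condition they form a chain under inclusion, and finiteness gives a minimum. The relation $\pi$ turns $\mathcal{A}^\ast$ into a finite rooted tree $\mathscr{T}$ with root $M$ and leaves exactly the singletons. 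Fix any linear order on the children of every internal node, perform a depth-first traversal of $\mathscr{T}$, and let $T$ be the order in which the leaves are visited (identifying $\{x\}$ with $x$). By construction, each subtree of $\mathscr{T}$ is visited in one contiguous block, so the proof reduces to the claim that for every $A\in\mathcal{A}^\ast$ the leaves of the subtree rooted at $A$ are exactly $\{\{x\}:x\in A\}$.

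This last claim is the technical heart. For $x\in A$, consider the unique ascending path from the leaf $\{x\}$ to the root of $\mathscr{T}$, namely $\{x\}\subsetneq\pi(\{x\})\subsetneq\pi^2(\{x\})\subsetneq\cdots\subsetneq M$. Every term of this chain contains $x$, hence shares a point with $A$, so by laminarity every term is comparable with $A$. The chain starts strictly below $A$ and ends at the superset $M$, and the minimality clause in the definition of $\pi$ forbids a jump strictly over $A$: the first index $i$ at which $\pi^i(\{x\})\not\subseteq A$ must satisfy $\pi^i(\{x\})=A$. Conversely, leaves $\{y\}$ with $y\notin A$ sit in subtrees rooted at sets disjoint from $A$, so they are outside the subtree of $A$. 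The main obstacle in the whole argument is exactly this verification — keeping the laminar dichotomy and the minimality defining $\pi$ straight — and the reason I route the infinite case through compactness rather than treating it directly is that for infinite $\mathcal{A}$ a set $A$ need not admit a smallest enclosing superset, so the parent function, and hence the tree $\mathscr{T}$, may fail to exist.
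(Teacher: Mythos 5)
Your proof is correct, and for the finite case it takes a route that genuinely differs from the paper's. The paper argues by induction on $\#M$: it picks a maximal member $B_1$ of $\mathcal{A}\setminus\{M\}$, observes (maximality plus laminarity) that every proper nonempty member of $\mathcal{A}$ lies inside $B_1$ or inside $B_2=M\setminus B_1$, orders the two parts recursively and concatenates; the infinite case is then handled by exactly the compactness argument you use, so that part of your proposal coincides with the paper. Your finite-case argument instead materializes the laminar family as a rooted tree (after adjoining $M$ and all singletons), takes a depth-first leaf order, and verifies that the leaves below a node $A$ are precisely the singletons of points of $A$; this buys an explicit, non-recursive description of the order and makes visible the connection with the hierarchical orders on trees used elsewhere in the paper (Lemma \ref{lem:hierarchicaltrees}), at the cost of the extra bookkeeping with the parent map, whereas the paper's induction needs only the one-line maximality observation. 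Two small points to patch: if $\emptyset\in\mathcal{A}$ your parent map is not defined at $\emptyset$ (the sets strictly containing $\emptyset$ need not form a chain), so discard $\emptyset$ first --- it is convex for any order; and in the key verification the conclusion should be that the first set of the chain that is not a \emph{proper} subset of $A$ is equal to $A$ (equivalently $\pi^{i-1}(\{x\})=A$ for your choice of $i$), since as literally written ``$\pi^i(\{x\})\not\subseteq A$ and $\pi^i(\{x\})=A$'' is contradictory; the minimality-of-the-parent argument you give does establish this corrected statement, so the slip is purely notational.
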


	\begin{proof}
		For finite $M$ the claim can be proved by induction on the cardinality of $M$.
		If $\# M = 0$ (and hence $M=\emptyset$) then the statement is obvious.
		Inductive step: if there are no sets besides $\emptyset$ and $M$ in $\mathcal{A}$, then any order satisfies the desired conditions.
		Otherwise, let a set 
		$B_1 $  be maximal by inclusion in $\mathcal A\setminus \{M\}$ (we use the same letter $M$ for the space $M$ and here for the set containing all points of $M$)
		and let $B_2 = M\setminus B_1$.
		Our assumption on $\mathcal{A}$ and the maximality of $B_1$ implies that 
		any $A\in \mathcal A$, 
		$A \ne M$, $A \ne \emptyset$  is either a subset of $B_1$ or a subset of $B_2$.
		Since $\#B_1, \#B_2 < \#M$, the induction hypothesis implies the existence of orders $T_1$ on $B_1$ and $T_2$ on $B_2$ such that for $i\in \{1,2\}$ all sets of $\mathcal{A}$ that are subsets of $B_i$ are convex with respect to $T_i$.
		The required order $T$ can be constructed as follows: all elements of $B_1$ are $<_T$ than all elements of $B_2$, and $T = T_i$ on $B_i$ for $i=1$ or $2$.
		
		Now consider the case when $M$ is not necessarily finite. We use again the Compactness theorem (similarly to the proof of Lemma \ref{rem:goedel}) to make a reduction to the finite case. 
		Consider constant symbols corresponding to points of $M$ 
		and to the  binary relation $<_T$.
		Observe that the axioms of a relation  to be an order and the condition in the definition of convex sets 
		can be described by sentences of first order. 
		Any finite collection of these sentences has a model by applying the lemma to finite subsets of $M$, hence the whole collection of sentences has a model.
	\end{proof}
	
	Now we mention some examples of families of convex sets for metric spaces which we describe in the following sections.
	\begin{enumerate}
		\item For rooted trees we can choose an order such that all branches of the tree are convex with respect to the order (see Lemma \ref{lem:hierarchicaltrees}.)
		
		\item For the graph $\Gamma_d$ associated to a tiling of a hyperbolic space $\H^d$ and the order, constructed in Section \ref{sec:hyper}, the sets that correspond to branches of the corresponding tree are convex.
		
		\item In Definition \ref{def:star} we define Star orders for graphs endowed with edges. 
		Star figures (union of a vertex and all the outgoing half-edges) are convex with respect to these orders.
	\end{enumerate}
	
	In these examples we could find the orders by showing  that the above mentioned sets satisfy the  assumption of Lemma \ref{le:convex} and apply this lemma. 
	We do not explain the details, since, as we have mentioned, we will describe these orders more explicitly in  Sections \ref{sec:examples}, \ref{sec:hyper}.
	
	We already mentioned an order on a square, constructed by Bartholdi and Platzman.  
	Now we describe explicitly another hierarchical  order on the unit square.
	
	This order also uses self-similarity in its construction, 
	and we chose this particular order to guarantee that the squares (discussed below) are convex.
	
	\begin{example} Let $M =[0;1)^2$.
		For each point $x = (x_1,x_2)\in M$ we consider the binary representations $0.a_1a_2\dots$ and $0.b_1b_2\dots$ of  $x_1$ and $x_2$ correspondingly. Given $x$, we define $c(x)$ to be the number with binary representation $0.a_1b_1a_2b_2\dots$.
		We define the order $T$ by setting $x <_T y \Leftrightarrow c(x) < c(y)$.
		Then all parts that are obtained by splitting $M$ into $2^{2k}$ equal squares are convex with respect to $T$.
		
	\end{example}	
	The figure below illustrates the order on these smaller squares.
	
	\begin{figure}[h!]
		
		\begin{tikzpicture}
		\draw[thick](0,0) -- (2,0) -- (2,2) -- (0,2) -- cycle;
		\draw[thick](1,0) -- (1,2);
		\draw[thick](0,1) -- (2,1);
		\draw[thick][-stealth](0.5,0.5) -- (0.5,1.5) -- (1.5,0.5) -- (1.5,1.5);
		
		\begin{scope}[shift={(3,0)}, scale = 0.5]
		\draw[thick](0,0) -- (4,0) -- (4,4) -- (0,4) -- cycle;
		\draw[thick](1,0) -- (1,4);
		\draw[thick](2,0) -- (2,4);
		\draw[thick](3,0) -- (3,4);
		\draw[thick](0,1) -- (4,1);
		\draw[thick](0,2) -- (4,2);
		\draw[thick](0,3) -- (4,3);
		\draw [-stealth] (0.5,0.5) -- (0.5,1.5) -- (1.5,0.5) --
		(1.5,1.5) -- (0.5,2.5) -- (0.5,3.5) -- (1.5,2.5) --
		(1.5,3.5) -- (2.5,0.5)  -- (2.5,1.5) --  (3.5,0.5) -- (3.5,1.5) --
		(2.5,2.5) -- (2.5,3.5) -- (3.5,2.5) -- (3.5,3.5);
		\end{scope}
		\end{tikzpicture}
		\caption{Each of the small squares is convex with respect to $T$. The picture illustrates the induced order on the set of squares.}
	\end{figure}
	
	\section{ Metric trees, acyclic gluings, finite graphs}\label{sec:examples}
	
	In this section we 
	describe first examples of evaluation of the order breakpoint and of asymptotic behaviour  of the order ratio function.
	Some of these examples will be used in the following sections.
	
	\begin{figure}[!htb] 
		\centering
		\includegraphics[scale=.35]{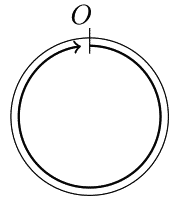}
		\caption{The clockwise order $T_0$ on the circle.}
		\label{pic:circleorder}
	\end{figure}
	
	\begin{lemma} \label{le:examplecircle} Let $M=S^1$ be a circle (with its inner metric). 
		Then $\OR_M(k)=2$ for all $k \geqslant 2$. More precisely, for any order $T$ of $S^1$ and any $\varepsilon > 0$ there exists a snake on $3$ points such that its 
		points are located in $\varepsilon$-neighborhoods of two antipodal points of $S^1$.
	\end{lemma}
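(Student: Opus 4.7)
The statement bundles two claims: the evaluation $\OR_{S^1}(k)=2$ for all $k\ge 2$, and the structural claim that for every order $T$ on $S^1$ one can place snakes on three points into $\varepsilon$-neighbourhoods of two antipodal points. I would prove the upper bound using the clockwise order and derive the matching lower bound from the second claim via Lemma~\ref{lem:snake}.

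For the upper bound, take $T_0$ to be the clockwise order and parameterize $S^1$ as $[0,2\pi)$. If $X$ has $T_0$-enumeration $x_1<\dots<x_k$, let $\alpha_i$ be the cyclic clockwise gaps (with $\alpha_k = 2\pi - x_k + x_1$ the wrap-around). Then $l_{\opt}(X) = 2\pi - \max_i \alpha_i$ (the optimal open tour removes the longest arc of the full cycle), while $l_{T_0}(X) = \sum_{i=1}^{k-1}\min(\alpha_i, 2\pi - \alpha_i)$ since the inner-metric distance between consecutive $T_0$-points is the shorter of the two arcs. A short case analysis, split on whether $\max_i \alpha_i = \alpha_k$ and whether any gap exceeds $\pi$, gives $l_{T_0}(X) \le 2\, l_{\opt}(X)$; the extremal case is $\max_i \alpha_i \to \pi$.

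The core of the proof is the antipodal snake claim. Fix $\varepsilon>0$, let $T$ be arbitrary, and write $U_\varepsilon(p)$ for the open $\varepsilon$-arc around $p$. I would argue by contradiction, assuming no triple $x_1 <_T x_2 <_T x_3$ with $\{x_1,x_3\}$ in one arc of the pair $U_\varepsilon(p), U_\varepsilon(-p)$ and $x_2$ in the other exists, for any $p$. First observe this assumption forces $U_\varepsilon(p)$ and $U_\varepsilon(-p)$ to be $T$-separated: otherwise there would exist $a,a' \in U_\varepsilon(p)$ and $b,b' \in U_\varepsilon(-p)$ with $a <_T b$ and $b' <_T a'$, and the only $T$-orderings of $\{a, a', b, b'\}$ compatible with both inequalities are four interleaving patterns, each containing a forbidden snake. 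Under this $T$-separation I can define $\sigma(p) \in \{+1,-1\}$ by setting $\sigma(p) = +1$ iff $U_\varepsilon(p) <_T U_\varepsilon(-p)$, and clearly $\sigma(-p) = -\sigma(p)$.

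The final step is that $\sigma$ is locally constant: if $d(p,p') < 2\varepsilon$ then $U_\varepsilon(p) \cap U_\varepsilon(p')$ and $U_\varepsilon(-p) \cap U_\varepsilon(-p')$ are both nonempty, and picking $x$ in the first and $y$ in the second shows that $\sigma(p)$ and $\sigma(p')$ are both determined by the single $T$-comparison of $x$ and $y$. Since $S^1$ is connected, a locally constant function is constant, contradicting $\sigma(-p) = -\sigma(p)$. So a snake exists: it has width at most $2\varepsilon$ and diameter at least $\pi - 2\varepsilon$, so as $\varepsilon \to 0$ the elongations grow without bound. Lemma~\ref{lem:snake} then yields $\OR_{S^1,T}(2) = 2$ for every $T$; combined with the upper bound and the monotonicity of $\OR_{M,T}(k)$ in $k$, this gives $\OR_{S^1}(k) = 2$ for all $k \ge 2$. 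The main obstacle is precisely the connectedness step: $T$ is a purely set-theoretic order with no topological content, so genuine continuity is unavailable, and the trick is that comparisons on overlapping arcs furnish a combinatorial substitute for local constancy that meshes with connectedness of $S^1$ through the antipodal map.
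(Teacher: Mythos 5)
Your proof is correct and is essentially the paper's argument run in contrapositive form: the paper directly defines the two-valued function comparing each point of $S^1$ with its antipode under $T$, samples $2n$ equally spaced points, locates two adjacent samples where this function flips, and extracts the $3$-point snake from the four points involved, whereas you assume no antipodal snake exists, upgrade this to $T$-separation of antipodal $\varepsilon$-arcs by the same four-point case analysis, and then contradict the antisymmetry of the resulting sign function via local constancy and connectedness. The upper bound through the clockwise order and the passage to $\OR_{S^1}(k)=2$ via Lemma \ref{lem:snake} and monotonicity likewise match the paper.
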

	
	\begin{proof} 
		Denote by $2R$ the length of the circle.
		Let $T_0$ be a natural  order  on this circle (see Figure \ref{pic:circleorder}).
		Suppose that $X \subset M$ and $l_{\opt}(X) = a$. 
		This means that the set $X$ lies on an arc $AB$ of length $a$. 
		If this arc does not contain the point $O$, then 
		$l_{T_0}(X) = a$. If this arc contains $O$, then $l_{T_0}(X) \leqslant 2a$.
		Hence $\OR_M(k) \le 2$ for all $k$.
		
		Since the function $\OR_M(k)$ is non-decreasing in $k$, it is enough to prove that 
		$\OR_{M}(2) \ge 2$. 
		We will show that 
		for any order $T$ on $M$ and any $\varepsilon>0$ 
		there exist points $x_1$, $x_2$, $x_3$ $\in X$ such that $x_1 <_{T} x_2 <_{T} x_3$ and $d(x_1, x_2), d(x_2, x_3) \ge R- \varepsilon$, i.e. $(x_1,x_2,x_3)$ is a snake of large elongation.
		
		\noindent Take $\varepsilon = R/n$, $n \in \mathbb{N}$. 
		Consider points $y_1, y_2, \dots y_{2n}$ with $d(y_i, y_{i+1}) = \varepsilon$, numbered with respect to the  order $T_0$.
		Consider a map $\phi$ from $M$ to the two point set $\{0,1\}$ which takes value
		$0$ at a point $x\in M$ if the antipodal point to $x$ is $T$-smaller than $x$ and $1$ otherwise. Clearly, the preimages of $0$ and $1$ are non-empty. 
		Therefore, there exists an index $i$ (considered  modulo $2n$) such that $\phi(y_i) = 1$, 
		$\phi(y_{i+1}) = 0$.

		\[\begin{tikzpicture}[scale = 0.6, every node/.style={scale=0.6}]
		\tikzmath{\R = 3; \x1 = \R*cos(100); \y1 =\R * sin(100);
			\x2 = 3*cos(60); \y2 =3 * sin(60);} 
		\draw[thick] (0, 0) circle [radius = 3cm];
		\draw[thick, ->] (-0.95 *\x1,-0.95*\y1) -- (0.95*\x1,0.95*\y1);
		\draw[thick, <-] (0.95*\x1,-0.95*\y1) -- (-0.95*\x1,0.95*\y1);
		\draw[dashed, thick] (-0.95*\x1, 0.95*\y1) -- (-0.95*\x1, -0.95*\y1);
		\node[above] at (\x1,\y1) {$y_i,1$};
		\node[above] at (-\x1,\y1) {$y_{i+1},0$};
		\node[below] at (-\x1,-\y1) {$y_{i+N}$};
		\node[below] at (\x1,-\y1) {$y_{i+1+N}$};
		\node[right] at (\x2, \y2) {$y_{i+2}$};
		\node[left] at (-\x2, \y2) {$y_{i-1}$};
		\draw (\x1, \y1) circle[radius = 0.075];
		\draw (-\x1, -\y1) circle[radius = 0.075];
		\draw (-\x1, \y1) circle[radius = 0.075];
		\draw (\x1, -\y1) circle[radius = 0.075];
		\draw (\x2, \y2) circle[radius = 0.075];
		\draw (-\x2, \y2) circle[radius = 0.075];
		\draw (\x2, -\y2) circle[radius = 0.075];
		\draw (-\x2, -\y2) circle[radius = 0.075];
		\end{tikzpicture}\]

		This means that $y_i >_T y_{i+n}$ and $y_{i+1} <_T y_{i+1+n}$.
		If $y_{i+1} >_T y_{i+n}$, then the snake 
		$y_{i+n} <_T y_{i+1} < y_{i+n+1}$ has large elongation.
		Otherwise we take the snake $y_{i+1} <_T y_{i+n} <_T y_i$.  
	\end{proof}

	\noindent 
	This lemma has a generalisation to spheres of dimension $d$. For statements and applications we refer to Section $6$
	in \cite{ErschlerMitrofanov2}.
	
	\medskip
	
	\noindent A {\it tripod} is a metric space that consists of three segments meeting at one vertex. 
	
	\begin{corol}\label{cor:tripod}
		Let $M$ be a tripod. 
		Then $\OR_M(2)=2$. 
		If $l$ is the minimal length of the three 
		segments of $M$, then for all $\varepsilon > 0$ and for any order $T$ of $M$ there exists a snake on three points, of diameter at least $l- \varepsilon$ and of width at most $2 \varepsilon$.
	\end{corol}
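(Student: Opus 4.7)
The plan is to mimic the intermediate-value style argument from Lemma \ref{le:examplecircle}, using the endpoint that is $T$-middle among the three as the ``threshold'' (playing the role of the antipodal comparison). Let $v$ be the central vertex and let $A_1,A_2,A_3$ be the three endpoints, so that branch $i$ is a segment of length $l_i\geq l$. After relabeling, we may assume $A_1<_T A_2<_T A_3$. Define $\varphi:M\setminus\{A_2\}\to\{L,R\}$ by $\varphi(x)=L$ if $x<_T A_2$ and $\varphi(x)=R$ otherwise.

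Fix $\varepsilon>0$ and consider the path $\gamma:[0,l_1+l_3]\to M$ obtained by traversing branch $1$ from $A_1$ to $v$ and then branch $3$ from $v$ to $A_3$. This $\gamma$ is an isometric embedding (for any $t,t'$, the geodesic in $M$ joining $\gamma(t),\gamma(t')$ lies on branches $1$ and $3$ and has length $|t-t'|$), and it avoids $A_2$ since $A_2$ lies on branch $2$. Discretize by sampling $t_0=0<t_1<\dots<t_n=l_1+l_3$ with $t_i-t_{i-1}<\varepsilon$. Since $\varphi(\gamma(t_0))=L$ and $\varphi(\gamma(t_n))=R$, some consecutive pair of samples receives different colors, and I set $x=\gamma(t_{i-1})$ and $y=\gamma(t_i)$ with $x<_T A_2<_T y$.

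By construction $d(x,y)\leq t_i-t_{i-1}<\varepsilon$. Because the geodesic from any point of branch $1$ or branch $3$ to $A_2$ must pass through $v$, we have $d(x,A_2)=d(x,v)+l_2\geq l_2\geq l$, and similarly $d(y,A_2)\geq l$. Hence the ordered triple $(x,A_2,y)$ is a snake of width $d(x,y)\leq\varepsilon\leq 2\varepsilon$ and diameter $\geq l\geq l-\varepsilon$, which is the quantitative claim.

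This construction works for every order $T$, and as $\varepsilon\to 0$ the elongation $(l-\varepsilon)/(2\varepsilon)$ is unbounded, so Lemma \ref{lem:snake} yields $\OR_{M,T}(2)=2$ for every order $T$; combined with the trivial upper bound $\OR_{M,T}(2)\leq 2$ from Lemma \ref{prop:max_min_or} we conclude $\OR_M(2)=2$. No serious obstacle arises; the only mild point is to choose $\gamma$ to avoid $A_2$ so that the dichotomy $\varphi$ is well-defined along the whole path, which is automatic once we use branches $1$ and $3$ and the middle endpoint $A_2$ as the threshold.
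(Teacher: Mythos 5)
Your proof is correct, and it takes a genuinely different route from the paper. The paper deduces the tripod case from Lemma \ref{le:examplecircle}: it wraps a circle $N$ of length $6$ around the tripod by a $1$-Lipschitz map $\varphi$ sending antipodal pairs to points at distance $1$, pulls the order $T$ back to $N$, extracts a $3$-point snake near two antipodal points of the circle, and pushes it forward. You instead argue directly on the tripod: you take the $T$-middle endpoint $A_2$ as a fixed threshold, two-colour the geodesic from $A_1$ to $A_3$ according to comparison with $A_2$, and apply a discrete intermediate-value argument along an $\varepsilon$-fine discretization to find consecutive points $x<_T A_2<_T y$ with $d(x,y)<\varepsilon$, while $d(x,A_2),d(y,A_2)\ge l_2\ge l$ because every geodesic to $A_2$ passes through the centre $v$. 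This is self-contained (no pullback machinery, no circle lemma) and even gives slightly sharper constants (diameter $\ge l$, width $\le\varepsilon$) than stated. The structural point worth noting is that your fixed-threshold trick exploits exactly what distinguishes the tripod from the circle: there is a single point ($A_2$) uniformly far from the whole connecting path $\gamma$, whereas on the circle no fixed threshold works and one must compare each point with its (varying) antipode --- which is why the paper's circle lemma needs the more involved argument, and why the paper then finds it economical to reduce the tripod to the circle rather than redo the intermediate-value step as you did. Both proofs establish $\OR_{M,T}(2)=2$ for every order via Lemma \ref{lem:snake}, and the upper bound $\OR_{M,T}(2)\le 2$ from Lemma \ref{prop:max_min_or} completes the equality, exactly as you say.
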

	
	\begin{proof}
		We can assume that all segments of the tripod $M$ have length $1$. 	
		Consider  $N = S^1$ to be a circle of length $6$.
		There is a $1$-Lipschitz map $\varphi: N \to M$ (see the figure below) such that any two antipodal points map to a pair of points at distance  $1$.

		\begin{tikzpicture}[scale = 0.6, every node/.style={scale=0.8}]
		\draw (0,1) circle(3);
		\foreach \x in {0,...,5}{
			\node at ({3.2*cos(60 * \x + 30)}, {3.2*sin(60 * \x + 30)+1}) {$A_{\x}$};
		}
		
		\draw[->][thick] (6.5,1.1) -- (8.5,1.1);
		
		\begin{scope}[shift = {(13,0)}]
		\draw[ultra thick] (0,0) -- (0,3);
		\draw[ultra thick] (0,0) -- (2.5,-1);
		\draw[ultra thick] (0,0) -- (-2.5,-1);
		\draw (-0.2,3.2) node[above]{$A_1$} -- (-0.2, 0.2) node[above left]{$A_2$} --
		(-2.6, -0.8) to[out = -130, in = 190] (-2.4, -1.2) node[below left] {$A_3$} -- 
		(0, -0.25)node[below] {$A_4$} -- (2.4, -1.2) node[below right] {$A_5$}  to [out = -20, in = -10] (2.6, -0.8)
		--(0.2, 0.2) node[above right] {$A_0$} -- (0.2, 3.2) to [out = 90, in = 90] cycle;
		\end{scope}
		
		\end{tikzpicture}
		
		Let $T$ be an arbitrary order on $M$ and let $T_N$ be some pullback of $T$ to $N$. 
		From Lemma \ref{le:examplecircle} we know that for any $\varepsilon > 0$ there are two antipodal points $x_1, x_2 \in N$ and points $y_1 <_{T_N} y_2 <_{T_N} y_3$ such that $y_1, y_3 \in B(x_1,\varepsilon)$, $y_2 \in B(x_2,\varepsilon)$. Here  $B(x, \varepsilon)$ denotes an open ball of radius $\varepsilon$ centered at $x$.
		
		We know that $d_{M}(\varphi(A_1), \varphi(A_2)) = 1$.
		Then $d_M(\varphi(y_1),\varphi(y_2))$ and $d_M(\varphi(y_1),\varphi(y_2)) > 1 - 2\varepsilon$, and 
		$d_M(\varphi(y_1),\varphi(y_3)) < 2\varepsilon$.
		Since $T_N$ is a pullback of $T$ we can conclude that $\varphi(y_1) <_{T}\varphi(y_2) <_{T}\varphi(y_3)$.
		This implies the second claim of the Corollary, and in particular that
		$\OR_M(2) = 2$.
	\end{proof}
	
	The following example shows that for some metric spaces the choice of an order that optimizes $\OR(k)$ depends
	on $k$.
	
	\begin{example}
		[Dependence of an optimal order on the number of points]\label{rem:numberofpoints}
		Let $M$ be a six point metric space, with the distance function described by the matrix

		\[ 
		\left( \begin{array}{ccccccc}
		0 & 1 & 1.5 & 1.7& 1.5 & 2 \\ 
		1& 0& 1.8& 1.6& 1.5& 1.6  \\ 
		1.5& 1.8& 0& 1& 1.7& 2  \\ 
		1.7 & 1.6& 1& 0& 1.3& 1.6  \\ 
		1.5& 1.5& 1.7& 1.3& 0& 1.7  \\ 
		2& 1.6& 2& 1.6& 1.7& 0 
		\end{array} 
		\right) \] 
		
		\noindent Observe that the pairwise distances take values between $1$ and $2$, and hence satisfy the triangular inequality.
		
		\noindent It can be checked that the only orders that minimize $\OR(2)$ are 
		$T_1 = (1,2,3,4,5,6)$ and $T_2 = (6,5,4,3,2,1)$, but these two orders are not optimal for $k=3$.
		It can be shown that the order $T_3 = (3, 4, 5, 1, 2, 6)$ satisfies $\OR_{M, T_3}(3) < \OR_{M, T_1}(3) = \OR_{M, T_2}(3)$.
	\end{example}

	\subsection{Orders on trees. Hierarchical orders}\label{subsec:trees}
	
	Consider a finite directed rooted tree $\Gamma$.
	The root of the tree is denoted by $\mathcal{O}_{\Gamma}$. Any vertex  can be joined with the root by a unique  directed path, and in particular each vertex has at most one
	parent. We assume that the direction on each edge is from the parent to its  child.
	
	We also assign to  each edge  some positive length, 
	and this
	provides a structure of a metric space on
	the vertices of $\Gamma$. 
	We say that $x$ is an {\it ascendant} of $y$ if there exists a sequence of vertices $(x_0 =x, x_1, \dots , x_l=y)$, such that for each $i$ the vertex $x_i$ is a parent of $x_{i+1}$.
	
	Let $T_{\rm rt}$ be an order on the vertices of the rooted tree $\Gamma$, defined as follows.
	First for any vertex we fix an arbitrary order on the children of this vertex.
	Now if a vertex
	$x$ is an ascendant of $y$ then $x<_{T_{\rm rt}} y$. Otherwise consider paths of minimal length from $\mathcal{O}_{\Gamma}$ to $x$ and $y$: $(v_0=\mathcal{O}_{\gamma}, v_1, \dots, v_k=x)$ and $(u_0=\mathcal{O}_{\Gamma} , u_1, \dots, u_m=y)$.
	Observe that  if neither $x$ is an ascendant of $y$ nor $y$ is an ascendant
	of  $x$, then there exists $i: i \le k, i\le m$ such that $v_i \ne u_i$. Take minimal $i$ with this property and put
	$x>_{T_{\rm rt}}y$ if $v_i> u_i$ (in our fixed order on the children of $v_{i-1}= u_{i-1}$).
	
	Any order obtained in this manner we call \textit{a rooted order}.
	Let us say that a subset of a rooted tree is a {\it branch} if it consists of some vertex $x$ and all vertices $y$ such that $x$ is an ascendant of $y$.
	We denote this  subset  $\Gamma_x$.
	
	Let us say that an order $T$ on vertices of a rooted tree is {\it hierarchical} if the following holds. 
	Suppose that
	$x<_T y<_T z$ and $x$ and $z$ belong to some branch. Then $y$ also belongs to this branch.
	
	The existence of hierarchical orders  can be deduced from Lemma \ref{le:convex}. Without referring to this Lemma, it is also not difficult to see it more directly, in Lemma
	\ref{lem:hierarchicaltrees}.
	Claim (1) of this Lemma is proven
	in Thm 2, \cite{Schalekamp}.
	For the convenience of the reader we provide the proof.
	
	\begin{lemma} \label{lem:hierarchicaltrees}
		Let $\Gamma$ be a rooted tree as above, then
		\begin{enumerate} 
			\item 
			For any  $k\ge 1$ and any rooted order $T_{\rm rt}$ 
			it holds that
			$$
			\OR_{V(\Gamma), T_{\rm rt}} (k) \le 2,
			$$
			where $V(\Gamma)$ is the set of vertices of $\Gamma$.
			
			\item Any rooted order $T_{\rm rt}$ is hierarchical.
			
		\end{enumerate}
	\end{lemma}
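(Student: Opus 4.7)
My plan is to prove (2) first and then deduce (1) from it. For part (2), fix a branch $B$ rooted at a vertex $v$ and suppose $x <_{T_{\rm rt}} y <_{T_{\rm rt}} z$ with $x, z \in B$; the goal is to show $y \in B$. Suppose for contradiction $y \notin B$, so $v$ is neither an ascendant of $y$ nor equal to $y$. Let $i$ be the smallest index at which the root-path to $v$ (which coincides with an initial segment of the root-paths to $x$ and to $z$) diverges from the root-path to $y$. By the construction of a rooted order, the $T_{\rm rt}$-comparison between $y$ and either of $x, z$ is determined solely by comparing the $(i+1)$-st vertices of the respective root-paths, and the $(i+1)$-st vertex for $x$ equals that for $z$ (both lie on the path through $v$). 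Hence either $y$ is $T_{\rm rt}$-less than both $x$ and $z$, or greater than both, contradicting $x <_{T_{\rm rt}} y <_{T_{\rm rt}} z$.

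For part (1), fix a finite subset $X \subset V(\Gamma)$ with $\#X \le k+1$ and let $\Gamma_X$ denote the minimal subtree of $\Gamma$ spanning $X$. Every edge $e$ of $\Gamma_X$ separates $X$ into two nonempty parts, so any path visiting all points of $X$ must cross $e$ at least once; writing $l_{\opt}(X)$ as a sum of tree-distances, each of which decomposes as a sum of traversed edge-lengths, this yields $l_{\opt}(X) \ge \operatorname{length}(\Gamma_X)$. Enumerate $X$ as $x_1 <_{T_{\rm rt}} \cdots <_{T_{\rm rt}} x_{m}$ (where $m = \#X$). Then $l_{T_{\rm rt}}(X) = \sum_{e} n_e \cdot \operatorname{length}(e)$, where $n_e$ counts the consecutive pairs $(x_i, x_{i+1})$ whose tree-path passes through the edge $e$.

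The crucial step is to show $n_e \le 2$ for every edge $e$ of $\Gamma_X$. Orient $e$ from a parent vertex to its child $c$, and let $B_c$ denote the branch rooted at $c$. Since $e \in \Gamma_X$, the intersection $X \cap B_c$ is nonempty. By part (2), $B_c$ is convex with respect to $T_{\rm rt}$, so $X \cap B_c$ appears as a consecutive run $x_a, x_{a+1}, \ldots, x_b$ in the enumeration. The only consecutive pairs whose tree-path crosses $e$ are $(x_{a-1}, x_a)$ and $(x_b, x_{b+1})$, giving $n_e \le 2$. Summing over $e \in \Gamma_X$ yields $l_{T_{\rm rt}}(X) \le 2\operatorname{length}(\Gamma_X) \le 2\,l_{\opt}(X)$, completing the proof. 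The main conceptual work is the convexity/hierarchy argument in part (2); once that is settled, part (1) is a clean Steiner-tree accounting.
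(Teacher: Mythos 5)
Your argument is correct in substance but organized differently from the paper's. The paper proves claim (1) by induction on the tree: the rooted-order tour visits $\mathcal{O}_\Gamma$, then one child's branch in its entirety, then the next, so the edge from the root to a child is traversed at most twice, and the induction hypothesis handles the edges deeper in the tree; the hierarchicity of $T_{\rm rt}$ (claim (2)) falls out as a byproduct of this description of the tour. You reverse the logical order: you establish (2) directly from the definition of a rooted order (the comparison of a point outside a branch with any point inside it is decided at the same divergence vertex, hence comes out the same for all points of the branch), and then deduce (1) by Steiner-tree accounting, where convexity of the branch $B_c$ below an edge $e$ forces the $T_{\rm rt}$-tour to cross $e$ at most twice. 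Both proofs ultimately rest on the same fact — each edge of the spanning subtree is used at most twice by the rooted-order tour — but your route obtains it without induction, at the price of arguing carefully from the definition of $T_{\rm rt}$; the paper's induction gets both claims in one sweep with less bookkeeping about the order.

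One case in your proof of (2) is missing: if $y \notin B$ lies on the root-path strictly above $v$ (i.e.\ $y$ is a proper ascendant of $v$), then the root-path to $y$ is an initial segment of the root-path to $v$ and no divergence index $i$ exists, so the comparison-at-the-divergence argument does not apply. In that case, however, $y$ is an ascendant of both $x$ and $z$, so the defining rule of a rooted order gives $y >_{T_{\rm rt}} x$ and $y >_{T_{\rm rt}} z$, contradicting $y <_{T_{\rm rt}} z$. With this one-line addition your case analysis is complete and the proof goes through.
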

	
	\begin{figure}[!htb] 
		\centering
		\includegraphics[scale=.45]{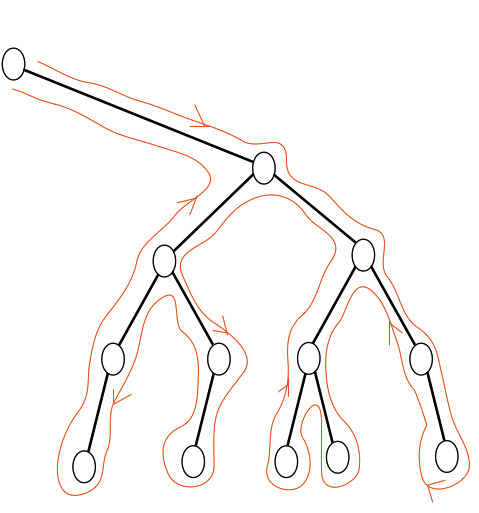}
		\caption{Vertices of  the tree are ordered with respect to  the first visit by the orange path. 
			In case when the tree is a Cayley graph of a free group $F_n$, one of such orders corresponds
			to a lexicographic order of $F_n$.}
		\label{pic:orderedtree}
	\end{figure}
	
	\begin{proof} Let $S$ be a  subset of
		vertices of $\Gamma$, $\# S \ge 2$. 
		Observe that any (possibly self-intersecting) continuous path  passing through $S$ passes
		through all the vertices of 
		the spanning tree $\Gamma'$ of $S$. 
		Observe also that $l_{\opt}(S)$ is at least the sum of the lengths of all edges in $\Gamma'$. 
		It is clear that the restriction of $T_{\rm rt}$ to $\Gamma'$ is a rooted order on $\Gamma'$.
		
		Now consider a path through vertices of  $S$ with respect to our order $T_{\rm rt}$. Let us show that this path passes at most twice
		through each edge of the spanning tree, this would show that the length of this path is at most twice the length of the optimal tour. Without loss of generality we can assume that $V(\Gamma)=S$.
		We prove the claim by induction on the number of vertices of the tree $\Gamma$.
		For each child of $\mathcal{O}_{\Gamma}$, consider the corresponding branch. The tour with respect to $T_{\rm rt}$ first visits $\mathcal{O}_{\Gamma}$, then one of its children and all points of its branch, then another child and all points of its branch.
		Observe that the edge from $\mathcal{O}_{\Gamma}$ to any of its children is visited at most twice: once passing from $\mathcal{O}_{\Gamma}$ to its child, and, possibly second time, after visiting all points of the branch of this child, before passing to another child of $\mathcal{O}_{\Gamma}$. 
		Now observe that  all edges not adjacent to $\mathcal{O}_{\Gamma}$ are visited at most twice by induction hypothesis, applied to the branches of $\Gamma$.
		So we have proved the first claim of the lemma.
		
		Now let $x$ be a vertex of $\Gamma$ and $\Gamma_x$ be a branch. 
		From the argument above it follows that the tour with respect to $T_{\rm rt}$ after visiting $x$ visits all  points of $\Gamma_x$, before   visiting any other points (not in $\Gamma_x$). 
		This implies the second claim of the lemma that the order is hierarchical.
	\end{proof}

	Recall that a metric space $M$ is a {\it metric tree} if it is a geodesic
metric space which is $0$-hyperbolic.

\begin{corol}\label{re:anytree}
	Let $M$ be a  metric tree. 
	Then there is some order $T$ such that $\OR_{M,T}(k) \le 2$ for all $k$.
\end{corol}
\begin{proof}
	By Lemma \ref{rem:goedel} we know that if the statement that $\OR(k) \le 2$ is true for any finite tree, then it is true for any metric tree.
\end{proof}

We say that an order $T$ on a metric tree $M$ is \emph{ hierarchical} if for any finite subset $S\subset M$
there is a choice of root $x\in S$ (which includes a rooted tree structure on $S$) such that the restriction of $T$ to $S$ is hierarchical.
Hierarchical orders always exist. To show this we can choose a point $x\in M$ as the root and obtain an order $T$ by applying Lemma \ref{le:convex} to the collection $\mathcal{A}$ that consists of all subsets $A \subset M$ such that $A$ is a component of $M\setminus \{y\}$ for some $y \in M$ and $x \not \in A$.
Given a finite subset $S \subset M$, we can choose the root of $S$ to be any closest point to $x$. It is then not hard to show that the restriction of $T$ to $S$ is hierarchical.
It can be shown that any hierarchical order satisfies the conclusion of Corollary \ref{re:anytree}, the proof is essentially the same as the proof of the first statement of Lemma \ref{lem:hierarchicaltrees}.
	
	\begin{remark}\label{rem:freegroup}
		Let $G$ be  a free group with a free generating set $S$, $\#S > 1$. 
		If we consider $G$ as a metric space with the word metric with respect to $S$, 
		then $G$ embeds isometrically into its Cayley graph, which is a metric tree.
		Corollary \ref{re:anytree} implies that $\OR_G(k) \leq 2$ for all $k\ge 3$.
		Now we describe a more explicit order. 
		Recall that each element of $G$ corresponds to a unique irreducible word in the alphabet $S\cup S^{-1}$. Choose an order on the set $S\cup S^{-1}$, then the lexicographic order $T_{lex}$ on words on this alphabet provides an example of a hierarchical order on $G$ with $\OR_{G,T_{lex}}(k) \leq 2$ for all $k$.
	\end{remark}

	\subsection{Acyclic gluing of spaces}
	
	Let us say that a metric space $M$
	is an {\it acyclic gluing} of metric spaces $M_\alpha, \alpha \in \mathcal{A}$ 
	if it is obtained as follows.
	Consider a graph which is a tree (finite or infinite), vertices of which have labellings of two types: 
	each vertex is either labelled by 
	$M_\alpha$, for some  $\alpha \in \mathcal{A} $ or it is labelled by a one-point space $\{x\}$.
	Each edge $e$ joins some $M_{\alpha}$ and $\{x\}$, and is labelled by a point $f_e(x)\in M_{\alpha}$. 
	
	Our space is obtained from a disjoint union of $M_{\alpha}$ and points $x$ by gluing all $x$ to the points $f_e(x)$ in corresponding $M_\alpha$.
	
	A graph which is an acyclic gluing of circles and intervals  is also called 
	{\it a cactus graph}.
	An example of a space homeomorphic to a cactus graph, with only circles glued together, 
	is shown on Figure \ref{pic:wedgescircles}.
	Observe, that a finite acyclic gluing can be obtained by a finite number of operations which take
	a wedge sum of two metric spaces.
	
	\noindent We will describe a natural way to order points of $M$ if orders on $M_{\alpha}$ are given. 
	
	\noindent For acyclic gluings it will be more natural to consider {\it cyclic order ratio functions}.
	
	\begin{definition}\label{def:cyclicOrderingRatioFunction}
		Let $M$ be a metric space with an order $T$.
		If $X$ is a finite subset of $M$ we denote by $l^{\circ}_{\opt}(X)$ the minimal length of a closed path visiting all points of $X$.
		We denote by $l_T^{\circ}(X)$ the length of the closed path corresponding to the order $T$: if the distance between the first and the last vertices of $X$ is equal to $r$, then  $l_T^{\circ}(X) = l_T(X) + r$.
		
		\noindent We define the {\it cyclic order ratio function} as
		$$
		\OR_{M,T}^{\circ}(k) := \sup_{X\subset M : 2 \le \#X \le k+1} \frac{l^{\circ}_T(X)}{l^{\circ}_{\opt}(X)}.
		$$

	\end{definition}
	
	\begin{lemma}\label{le:snakecycle} For any ordered metric space $(M,d,T)$ and any $k$ it holds that
		
		$$
		1 \le \OR^{\circ}_{M,T}(k) \le \frac{k+1}{2}.
		$$
		Moreover if $k$ is odd, then the following conditions are equivalent:
		\begin{enumerate}
			\item $\OR^{\circ}_{M,T}(k) = (k+1)/2$
			\item There exist snakes on  $k+1$ points in $(M,T)$ of arbitrarily large elongation
		\end{enumerate}
	\end{lemma}

	\begin{proof}
		The inequality $1 \le \OR^{\circ}_{M,T}(k)$ is obvious. 
		Let $X \subset M$, $2 \leq \#X \leq k+1$, and let $a$ be the diameter of $X$. 
		Then $l^{\circ}_{\opt}(X) \ge 2a$ and $l^{\circ}_{T}(X) \le (k+1)a$.
		This implies the inequality $\OR^{\circ}_{M,T}(k) \le \frac{k+1}{2}$.
		
		Let $k$ be odd and let $S = \{x_0,\dots,x_{k}\}$ be a snake of diameter $a$ and width $b$. 
		By the triangle inequality, for any $i$ we have $d(x_i,x_{i+1}) \ge a-2b$ and hence  $l^{\circ}_T(S)\ge (k+1)(a-2b)$. 
		It also holds that $l^{\circ}_{\opt}(S)\le 2a+(k-1)b$. 
		If the snake $S$ has large elongation then $l^{\circ}_T(S)/l^{\circ}_{\opt}(S)$ is close to $(k+1)/2$.
		
		Now assume that a subset $X = \{x_0,\dots, x_{k}\}$ has diameter $a$ and $\frac{l^{\circ}_T(X)}{l^{\circ}_{\opt}(X)}$ is close to $(k+1)/2$.
		Suppose that $x_0 <_T \dots <_T x_k$.
		The following argument is analogous to that in the proof of Lemma \ref{lem:snake}.
		It is easy to see that  for any $1 \le i \le k$ the distances $d(x_{i-1},x_i)$ and $d(x_{i},x_{i+1})$ are close to $a$, and $d(x_{i-1},x_{i+1})$ is close to 0 (we assume here that $x_{k+1} = x_0$). 
		We can deduce that the width of the snake $X$ is also close to 0, and $X$ has large elongation. 
	\end{proof}
	
	The equivalence of $(1)$ and $(2)$ in the lemma above means that for odd $k$, $\OR_{M,T}(k) = k$ if and only if $\OR^{\circ}_{M,T}(k)= (k+1)/2$.		
	
	\begin{definition}
		Let $M$ be a metric space (or more generally a set) and $T$ be an order on $M$. 
		Suppose that $M$ is a disjoint union $M =  A \bigsqcup B$ and that for any two points $x\in A$, $y \in B$ we have $x <_T y$.
		We define a new order $T'$ on $M$ as follows: if $x, y \in A$ or $x, y \in B$ we put $x <_{T'} y$ if $x <_T y$. 
		For any two points $x \in A$ and $y\in B$ we put $x >_T y$.
		We say that $T'$ is a {\it cyclic shift} of the order $T$.
	\end{definition}
	
	It is clear that for a given metric space the relation to be a cyclic shift is an equivalence relation on orders.
	For example, all clockwise orders of a circle with different starting points are cyclic shifts of each other.
	For a given metric space $M$, 
	an order $T$ and a point $x\in M$,
	there exists a unique cyclic shift $T^x$ such that $x$ is the minimal point for this order. 
	
	In the following two remarks we state straightforward properties of cyclic shifts and cyclic order ratio functions.

	\begin{rem}\label{rem:ORcyclicshift1}
		If $M$ is a metric space endowed with an order $T$, then for any finite subset $X$ of $M$ it holds that
		
		$$
		1 \leq \frac{l^{\circ}_T(X)}{l_T(X)} \leq 2; \:\:\:\: 
		1 \leq \frac{l^{\circ}_{\opt}(X)}{l_{\opt}(X)} \leq 2.
		$$
		
		It follows that  for any $k$ 
		$$
		\frac{1}{2} \OR_{M,T}(k) \leq \OR^{\circ}_{M,T}(k) \le 2 \OR_{M,T}(k).
		$$
		
		For any $m$, $\mins(M,T) \geq 2m$ if and only if $\OR^{\circ}_{M,T}(2m-1) = m$.
	\end{rem}
	
	\begin{rem}\label{rem:ORcyclicshfift2}
		If $T'$ is a cyclic shift of $T$, then for any $k$ we have
		$$
		\OR^{\circ}_{M,T'}(k) = \OR^{\circ}_{M,T}(k),
		\: 
		\OR_{M,T'}(k) \le 2 \OR_{M,T}(k)
		$$
		
		In particular, taking in account the last claim of the previous remark 
		$\OR_{M,T}(2m-1) = 2m-1$ if and only if 
		$\OR_{M,T'}(2m-1) = 2m-1$.
	\end{rem}

	Now given a family $M_\alpha$ and a family of orders
	on these spaces $T_\alpha$, we consider a
	metric space $M$ 
	which is an acyclic gluing of ordered spaces $M_{\alpha}$, $\alpha \in \mathcal{A}$. We define  a family of  {\it  clockwise orders} on $M$ as follows.
	We assume below that the tree in the definition of acyclic gluing is finite (this is assumption is not essential).
	One way  is to define such orders recursively:
	having already defined  orders $T_A$ and $T_B$ on two metric spaces $A$ and $B$, we will define an order $T_C$ on the wedge sum $C$ of $A$ and $B$ over a point $x$.
	We 
	choose any of the two ways to enumerate $A$ and $B$, for example $(A, T_A)=(M_1, T_1)$ and $(B, T_B)=(M_2, T_2)$. 
	We use the notation that denotes by $x$ both a point in $M_1$ and one in $M_2$.
	
	Recall that for any point in any space, there is a cyclic shift that makes this point minimal.
	Let $T^x_1$ and $T^x_2$ be  cyclic shifts of $T_1$ and $T_2$ respectively such that $x$ is the minimal point in $(M_1,T^x_1) $ and $(M_2, T^x_2)$. 
	Consider the order $T_C$ on a wedge sum of $M_1$ and $M_2$, such that $x$ is a minimal  point, then come all points of $M_1 \setminus \{ x \}$ ordered as in  $T^x_1$ and
	then all points of $M_2 \setminus \{ x \}$
	ordered as in  $T^x_2$.
	In such a way  we construct recursively  an order on an acyclic gluing. 
	We say that such orders and any cyclic shift of these orders are  {\it clockwise orders} on $M$.
	Finally, if the tree in the definition of an acyclic gluing is infinite, we can say that an order on this space is a clockwise order if the restriction of this order to any finite (connected) acyclic gluing
	is a clockwise order.
	
	There is another way to formulate this definition and to visualize clockwise orders. 
	Given an acyclic gluing $M$ of $M_\alpha$, $\alpha \in \mathcal{A}$, we consider circles $N_\alpha$, $\alpha \in A$ and choose a direction (clockwise order) on each $N_{\alpha}$.
	For all joint points on $M_\alpha$ we consider points on $N_\alpha$ enumerated in the same order, up to a cyclic shift, and we consider the corresponding acyclic gluing of $N_\alpha$. 
	The clockwise order on an acyclic gluing $N$ of $N_\alpha$ can be obtained in the following way.
	We construct a continuous embedding of $N$ in the plane in such a way that 
	the image of $N$ is equal to its outer boundary, and the clockwise orientation of the boundary  of the image of $N$  (in the plane) coincides with clockwise orientations of $N_{\alpha}$.
	
	We choose a joint point $O$ (among joint points joining our circles)
	as a base point  and  consider an order corresponding to the first visit of a clockwise path in the plane, see
	Figure \ref{pic:wedgescircles}.
	For each point of $M_\alpha$ we can associate an arc in $N_\alpha$. If $m$ is a joint point in $M$, then it corresponds
	to a joint point in $N$. We will use a convention that the initial point of an arc (with respect to the first visit of the clockwise path starting from $O$) is included in the arc and the last point of the arc is not included.
	
	A clockwise order on $M$ can be described as follows. Take $m, m' \in M$. First suppose
	that there exists $\alpha$ such that $m, m' \in M_\alpha$. Observe that our choice of $O$ fixes a
	choice of a cyclic shift on the acyclic gluing of our circles (and in particular on  $N_\alpha$). This induces a choice of an acyclic shift also 
	on $M_\alpha$.
	In case when there exists $\alpha \in \mathcal{A}$ such that $m, m' \in M_\alpha$ we compare $m, m'$ with respect to the above mentioned cyclic shift of $T_\alpha$ (on $M_\alpha$).
	Finally, if there is no $\alpha$ such that $m, m'$ belong to the same $M_\alpha$, we consider corresponding arcs $\gamma$, $\gamma'$, choose any point $n\in \gamma$, $n' \in \gamma'$ and compare
	$n$ and $n'$ in $N$. If $n<n'$ with respect the order in $N$,
	then we say that $m<m'$
	
	\begin{figure}[!htb] 
		\centering
		\includegraphics[scale=.45]{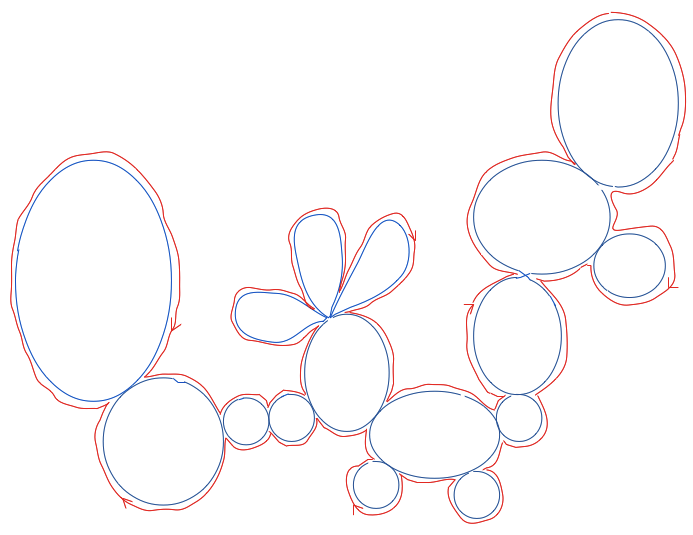}
		\caption{An acyclic gluing of circles. A clockwise order corresponds to a first visit by an orange path. More precisely,
			to choose one among cyclic shifts of this clockwise order, we fix any point of the gluing and consider the first visit by an orange path starting at this point.}
		\label{pic:wedgescircles}
	\end{figure}
	
	Now observe that a metric tree is an acyclic gluing of intervals, and recall 
	that $\OR(k) \le 2$ for all $k$ in this case. 
	Here we prove a statement
	about general acyclic gluings.
	
	\begin{lemma} \label{le:ORacyclicunion}
		Let $M$ be an acyclic gluing of ordered spaces $(M_\alpha, T_{\alpha})$, $\alpha \in \mathcal{A}$. 
		Let $T$ be a clockwise order on $M$. 
		For all $k$ we have
		$$
		\OR^{\circ}_{M,T}(k) =  \sup_\alpha \OR^{\circ}_{M_\alpha, T_\alpha} (k).
		$$
	\end{lemma}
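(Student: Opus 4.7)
\medskip

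\noindent\textbf{Proof proposal.} The plan is to establish the two inequalities separately, where the lower bound is essentially immediate from the construction of $T$ while the upper bound requires a decomposition argument over the tree of the gluing.

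For the lower bound, observe that for each $\alpha$ the restriction of the clockwise order $T$ to $M_\alpha$ coincides with some cyclic shift of $T_\alpha$. By Remark \ref{rem:ORcyclicshfift2} the cyclic order ratio function is invariant under cyclic shifts, so for any finite $X\subset M_\alpha$ with $2\le \#X\le k+1$ we have $l^\circ_T(X)/l^\circ_{\opt}(X)=l^\circ_{T_\alpha}(X)/l^\circ_{\opt}(X)$. Taking the supremum over such $X$ and then over $\alpha$ yields $\OR^{\circ}_{M,T}(k)\ge\sup_\alpha\OR^{\circ}_{M_\alpha,T_\alpha}(k)$.

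The bulk of the argument is the reverse inequality. Fix a finite $X\subset M$ with $2\le \#X\le k+1$. Since an acyclic gluing can be reduced to the finite case by taking the convex hull of $X$ in the underlying tree, we may assume that the index set $\mathcal A$ is finite. For each $\alpha$, define $Y_\alpha\subset M_\alpha$ to be the union of $X\cap M_\alpha$ together with every joint point $p\in M_\alpha$ such that the branch of $M\setminus\{p\}$ not containing the interior of $M_\alpha$ contains at least one point of $X$. The key combinatorial estimate is $\#Y_\alpha\le \#X\le k+1$: indeed, when one roots the gluing tree at $M_\alpha$, the points $Y_\alpha\setminus X$ are in bijection with branches that contain $X$-points disjoint from $M_\alpha$, and these branches partition $X\setminus M_\alpha$ into nonempty pieces.

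The main step is to prove the two decomposition identities
\begin{equation*}
l^{\circ}_{\opt}(X)=\sum_{\alpha}l^{\circ}_{\opt}(Y_\alpha),\qquad l^{\circ}_{T}(X)=\sum_{\alpha}l^{\circ}_{T_\alpha}(Y_\alpha).
\end{equation*}
For the first identity, any closed tour in $M$ visiting $X$ must, because the underlying structure is a tree of spaces, traverse each joint point appearing in some $Y_\alpha$ an even number of times; cutting the tour at these joint points produces a family of closed sub-tours, one on each $M_\alpha$ visiting exactly $Y_\alpha$, and summing the lengths of optimal such sub-tours gives the bound $\le$. Conversely, concatenating optimal closed sub-tours on each $(M_\alpha,Y_\alpha)$ through the tree yields a closed tour on $X$ of length $\sum_\alpha l^{\circ}_{\opt}(Y_\alpha)$, giving the matching $\ge$. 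For the second identity, the recursive/planar description of a clockwise order makes it transparent that the closed tour prescribed by $T$ on $X$, when restricted to a single $M_\alpha$, visits precisely the points of $Y_\alpha$ in an order that is a cyclic shift of $T_\alpha$; invariance of $l^\circ$ under cyclic shifts then gives the stated equality term by term. The main obstacle is carrying out this second identity rigorously for an infinite gluing—one has to pin down the inductive construction of the clockwise order so that the detours into subtrees are consistent with $T_\alpha$ on each piece.

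Combining these ingredients, since $\#Y_\alpha-1\le k$ we obtain
\begin{equation*}
l^{\circ}_{T}(X)=\sum_\alpha l^{\circ}_{T_\alpha}(Y_\alpha)\le\sum_\alpha\OR^{\circ}_{M_\alpha,T_\alpha}(k)\,l^{\circ}_{\opt}(Y_\alpha)\le\Bigl(\sup_\alpha\OR^{\circ}_{M_\alpha,T_\alpha}(k)\Bigr)\,l^{\circ}_{\opt}(X),
\end{equation*}
and taking the supremum over admissible $X$ finishes the proof.
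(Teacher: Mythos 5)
Your proposal is correct and follows essentially the same route as the paper: the paper's proof also reduces to the minimal finite sub-gluing containing $X$, takes $Y_\alpha$ to be the image $\rho_\alpha(X)$ of $X$ under the nearest-point retraction onto $M_\alpha$ (which is exactly your set, and makes $\#Y_\alpha\le\#X$ automatic), uses that this retraction carries the clockwise order to a cyclic shift of $T_\alpha$, and sums the resulting per-component bounds. The only cosmetic difference is that the paper needs just the one inequality $\sum_\alpha l^{\circ}_{\opt}(Y_\alpha)\le l^{\circ}_{\opt}(X)$ (obtained by cutting the optimal tour into its jumps inside each $M_\alpha$), whereas you also prove the reverse direction of that identity, which is true but not required.
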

	
	\noindent If we allow metric spaces with infinite
	distances between some points, there is a way to define $\OR$ also in this more general setting (considering subsets $S$ with finite pairwise distances)
	and in this setting the lemma can be reformulated to claim that order ratio function of an acyclic gluing $M$ is the same as for the disjoint union of $M_\alpha$.

	\begin{proof}
		It is obvious that the left hand side in the equation in  the lemma  is at least
		as large as the right hand side, so it is enough to prove that
		$$
		\OR^{\circ}_{M,T}(k) \leq  \sup_\alpha \OR^{\circ}_{M_\alpha, T_\alpha} (k).
		$$
		
		To prove this inequality, 
		we first observe that it is enough to
		prove this claim for finite acyclic gluings.
		We consider a finite set $X$ consisting of $k+1$ points $x_1, x_2, \dots x_{k+1}$.
		Consider a minimal (with respect to inclusion) acyclic gluing $M'$ of $M_\alpha$, 
		$\alpha \in \mathcal{A'} \subset \mathcal{A}$, containing $X$.
		If $M_\alpha$ are path-connected metric spaces,
		then observe that any continuous path passing through $X$ visits each $M_\alpha$,
		for $\alpha \in \mathcal{A}'$. 
		Even though $M_\alpha$ are not assumed to be path-connected in general, for any two points 
		$x, y$ we can consider a finite sequence of points $x=z_1$, $z_2$ \dots $z_p=y$, where $p$ depends on $x$ and $y$, such that 
		for any $i$ the points $z_i$ and $z_{i+1}$ belong to the same component $M_{\alpha}$, and
		the distance between $x$ and $y$ is
		$$
		\sum_{i: 1\le i \le p-1} d_M(z_i, z_{i+1}).
		$$
		It is clear that such $z_i$ exist and we can choose the $z_i$ such that for $1<i <p$ these points are joint points in the acyclic gluing. 
		
		Now we choose a component $M_\alpha$, $\alpha \in \mathcal{A}'$ and consider 
		the retract mapping $\rho_{\alpha}: M\to M_{\alpha}$ that maps every point of $M$ to the nearest point of $M_{\alpha}$. 
		It is clear that if $x\not\in M_{\alpha}$ then $\rho(x)$ is a joint point.
		Observe, that the retract $\rho_{\alpha}$  sends the clockwise order on $M$ to some (cyclic shift) of $T_\alpha$.
		More precisely, if we choose an arbitrary point $y\in M_{\alpha}$ that is not a joint point and consider the cyclic shift $T^{y}$ of $T$, then
		for any two points $z_1 <_{T^y}z_2$ we have
		$\rho_{\alpha}(z_1) \leq_{T^y} \rho_{\alpha}(z_2)$.
		
		\begin{figure}[!htb] 
			\centering
			\includegraphics[scale=.45]{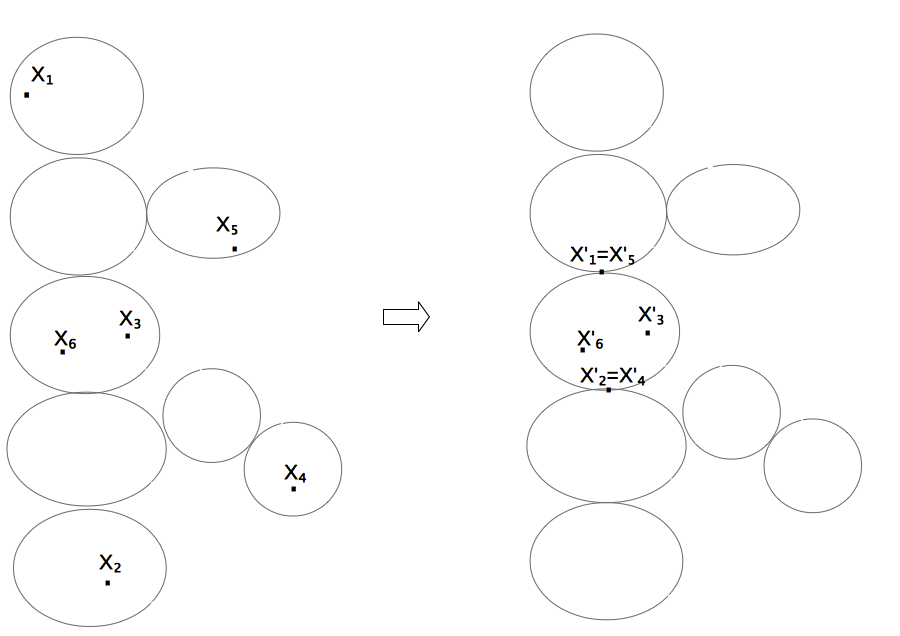}
			\caption{A retract of the gluing to s component 
				and the images of $x_i$.}
			\label{pic:wedgessumOR}
		\end{figure}

		Observe also that any closed path
		passing through  $x_i$ ($1 \le i \le k+1$) passes through all their images $\rho_{\alpha}(x_i)$ ($1 \le i \le k+1$).
		To be more precise, if $M_\alpha$ are not necessarily geodesic, we use the same convention as before: 
		by a path in $M$ we mean a sequence of points such that any two consecutive points are in the same component. 
		We call such pairs of consecutive points {\it jumps}.
		
		For the shortest closed path visiting all points of $X$ we consider all its jumps
		inside $M_\alpha$.
		These jumps form a cyclic tour visiting all the points of $\rho_{\alpha}(X)$, and their total length is at least $l^{\circ}_{\opt}(\rho_{\alpha}(X))$.
		Now we consider a tour of $X$ with respect to a clockwise order $T$, and take all its ``jumps'' inside  $M_{\alpha}$.
		They form a tour of $\rho_{\alpha}(X)$ with respect to $T$ and $T_{\alpha}$, and its length can be bounded from above as follows
		
		$$
		l^{\circ}_{T}(\rho_{\alpha}(X)) 
		\leq \OR^{\circ}_{M_{\alpha}, T_{\alpha}}\left(\#\rho_{\alpha}(X)\right)
		l^{\circ}_{\opt}(\rho_{\alpha}(X)) \leq
		\OR^{\circ}_{M_{\alpha}, T_{\alpha}}(k)
		l^{\circ}_{\opt}(\rho_{\alpha}(X)).
		$$
		
		Summing these inequalities over all $\alpha\in \mathcal{A}'$ and using the obvious inequality 
		
		$$
		\sum_{\alpha\in \mathcal{A}'}l^{\circ}_{\opt}(\rho_{\alpha}(X)) \le l^{\circ}_{\opt}(X),
		$$
		we prove the claim of the lemma.
	\end{proof}
	
	In Remark \ref{rem:ORcyclicshift1} we mention that $\OR$ and $\OR^{\circ}$ are the same up to the multiplicative constant $2$.
	So we get the following
	
	\begin{corol}
		Let $M$ be an acyclic gluing of ordered spaces $(M_\alpha, T_{\alpha})$, $\alpha \in \mathcal{A}$. 
		Let $T$ be a clockwise order on $M$. 
		Then for any $k \ge 1$
		$$
		\OR_{M,T}(k) \le 4 \sup_\alpha \OR_{M_\alpha, T_\alpha} (k).
		$$
	\end{corol}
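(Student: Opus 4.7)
The proof is a direct combination of Lemma \ref{le:ORacyclicunion} with the two sided comparison between $\OR$ and $\OR^{\circ}$ recorded in Remark \ref{rem:ORcyclicshift1}. I would simply chain the three relevant inequalities.

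Concretely, my plan is the following. First, I invoke the left inequality of Remark \ref{rem:ORcyclicshift1}, namely $\OR_{M,T}(k) \le 2 \OR^{\circ}_{M,T}(k)$, which replaces the linear order ratio function by the cyclic one, at the cost of a factor $2$. Next, I apply Lemma \ref{le:ORacyclicunion} to identify
$$
\OR^{\circ}_{M,T}(k) \;=\; \sup_{\alpha \in \mathcal{A}} \OR^{\circ}_{M_\alpha, T_\alpha}(k),
$$
which is the main content of the previous lemma and requires no additional argument here. Finally, I bound each cyclic order ratio function on the right by the linear one using the other half of Remark \ref{rem:ORcyclicshift1}, namely $\OR^{\circ}_{M_\alpha,T_\alpha}(k) \le 2 \OR_{M_\alpha,T_\alpha}(k)$, and take the supremum over $\alpha$. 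Stringing the three steps together gives
$$
\OR_{M,T}(k) \;\le\; 2 \OR^{\circ}_{M,T}(k) \;=\; 2 \sup_\alpha \OR^{\circ}_{M_\alpha,T_\alpha}(k) \;\le\; 4 \sup_\alpha \OR_{M_\alpha,T_\alpha}(k),
$$
which is the desired estimate.

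There is no real obstacle: every step is already proved earlier in the section, and the constant $4$ arises exactly as the product of the two factors of $2$ coming from the passage between cyclic and linear versions of the order ratio function. The only thing one has to be slightly careful about is that the hypotheses of Lemma \ref{le:ORacyclicunion} apply verbatim, i.e.\ that $T$ is indeed a clockwise order on $M$ built from the $T_\alpha$, which is part of the statement of the corollary. In particular, no additional structural assumption on the spaces $M_\alpha$ (geodesic or not) is needed beyond what was already assumed in Lemma \ref{le:ORacyclicunion}.
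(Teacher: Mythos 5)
Your argument is exactly the one the paper intends: the corollary is stated immediately after the observation that $\OR$ and $\OR^{\circ}$ agree up to a factor of $2$ (Remark \ref{rem:ORcyclicshift1}), and the factor $4$ arises precisely as the product of the two factors of $2$ on either side of the equality from Lemma \ref{le:ORacyclicunion}, just as you chain them. The proposal is correct and coincides with the paper's proof.
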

	
	As another  corollary  we obtain 
	\begin{corol}\label{cor:acyclicgirth}
		
		Let $M$ be an acyclic gluing of ordered spaces $(M_{\alpha}, T_{\alpha})$, $\alpha \in \mathcal{A}$, and let $T$ be a clockwise order on $M$. 
		For a finite acyclic gluing $M$ 
		$\OR_{M,T}(2k-1)=2k-1$ if and only if
		for some $\alpha$ it holds that $\OR_{M_\alpha,T}(2k-1)=2k-1$.
		More generally, for a not necessarily finite acyclic gluing we have
		$\OR_{M,T}(2k-1)=2k-1$ if and only if 
		there is no uniform upper bound for all $M_{\alpha}$ on elongations of snakes on $2k$ points in $M_{\alpha}$.
	\end{corol}
	
	\begin{proof}
		The ``if'' directions obviously follow from Lemma \ref{lem:snake}.
		Now suppose that for each $\alpha$ the elongations of snakes on $2k$ points in all $(M_{\alpha},T_{\alpha})$ are uniformly bounded from above.
		Using the upper bound for $\OR^{\circ}(2k-1)$ in terms of the elongations of snakes (see the proof of Lemma \ref{le:snakecycle}), we observe that there exists $r < k$ such that 
		$\OR^{\circ}_{M_{\alpha}, T_{\alpha}}(2k-1) \le r$ for any $\alpha$.
		Lemma \ref{le:ORacyclicunion} shows that $\OR^{\circ}_{M,T}(2k-1)\le r < k$, and the result follows from Lemmas \ref{le:snakecycle} and \ref{lem:snake}.
	\end{proof}
	
	The following corollary is about $\mins$ for free product of groups. 
	In the particular case of virtually free groups we will show later that they can be characterized as groups with small $\mins$.
	In this corollary the metric associated to the groups is the word metric on the elements of these groups (we consider only vertices of  Cayley graphs, not edges).
	\begin{corol} Let $G=A*B$ be a free product of groups $A$ and $B$. 
		If the maximum of $\mins(A)$, $\mins(B)$ is odd, then $\mins(G)$ is equal to this maximum.
		If the maximum is even, then $\mins(G)$ is either equal to this maximum or the maximum plus one.
	\end{corol}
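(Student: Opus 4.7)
The plan is to realise the Cayley graph of $G=A*B$ (with respect to the disjoint union of generating sets of $A$ and $B$) as an acyclic gluing, via the Bass--Serre tree, of its $G$-translates of the Cayley graphs of $A$ and $B$; the statement will then follow by combining Lemma \ref{le:ORacyclicunion} with the dictionary between $\OR$ and $\OR^{\circ}$ provided by Remark \ref{rem:ORcyclicshift1}.

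The lower bound $\mins(G)\ge m:=\max(\mins(A),\mins(B))$ is the easy direction. The inclusion of the Cayley graph of $A$ into that of $G$ is isometric, by the normal form theorem for free products (a shortest word in generators of $G$ representing an element of $A$ may be taken to use only generators of $A$). Consequently, for any order $T$ on $G$, the restriction $T|_A$ satisfies $\OR_{A,T|_A}(k)\le \OR_{G,T}(k)$ for all $k$, so $\mins(A)\le \mins(A,T|_A)\le \mins(G,T)$, and likewise for $B$. Taking the infimum over $T$ yields $m\le \mins(G)$.

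For the upper bound I fix orders $T_A$, $T_B$ achieving $\mins(A,T_A)=\mins(A)$ and $\mins(B,T_B)=\mins(B)$, transport them to every $G$-translate of the Cayley graphs of $A$ and $B$ by left multiplication (which is an isometry of $\mathrm{Cay}(G)$), and form a clockwise order $T$ on $G$ as in Section 3. Lemma \ref{le:ORacyclicunion} then gives, for every $k\ge 1$,
\[
\OR^{\circ}_{G,T}(k)=\max\bigl(\OR^{\circ}_{A,T_A}(k),\ \OR^{\circ}_{B,T_B}(k)\bigr).
\]
Remark \ref{rem:ORcyclicshift1} converts breakpoint information into cyclic information at odd arguments: $\mins(M,T')\le 2k-1$ if and only if $\OR^{\circ}_{M,T'}(2k-1)<k$. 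If $m=2k-1$ is odd, then both $\OR^{\circ}_{A,T_A}(2k-1)$ and $\OR^{\circ}_{B,T_B}(2k-1)$ are strictly less than $k$, so their maximum is too, and Remark \ref{rem:ORcyclicshift1} applied to $(G,T)$ yields $\mins(G,T)\le 2k-1=m$; combined with the lower bound, $\mins(G)=m$. If $m=2k$ is even, I apply the same argument at the odd value $m+1=2k+1$: both $\mins(A,T_A),\mins(B,T_B)\le 2k\le 2k+1$, so $\OR^{\circ}(2k+1)<k+1$ in each factor, whence $\mins(G)\le 2k+1=m+1$.

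The main point to verify carefully is that the Cayley graph of $A*B$ is indeed an acyclic gluing in the sense of Section 3: one must check that its decomposition into left cosets $gA$ and $gB$, each of which carries an isometric copy of $\mathrm{Cay}(A)$ or $\mathrm{Cay}(B)$, is glued along single vertices in a tree pattern (the Bass--Serre tree). Once this is in place, the rest is a bookkeeping application of Lemma \ref{le:ORacyclicunion} and Remark \ref{rem:ORcyclicshift1}. The apparent loss of one unit in the even case is intrinsic, since Remark \ref{rem:ORcyclicshift1} only controls $\OR^{\circ}$ at odd arguments, so an even upper bound for $\mins(A),\mins(B)$ must first be weakened to the next odd integer before it can be transferred through the acyclic gluing.
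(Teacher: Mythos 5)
Your proposal is correct and follows exactly the route the paper intends: realize the Cayley graph of $A*B$ (with generating set $S_A\sqcup S_B$) as an acyclic gluing of isometric copies of the Cayley graphs of $A$ and $B$ along the Bass--Serre tree, take a clockwise order built from optimal orders $T_A,T_B$, and combine Lemma \ref{le:ORacyclicunion} (equivalently Corollary \ref{cor:acyclicgirth}) with the odd-argument dictionary of Remark \ref{rem:ORcyclicshift1}, the lower bound coming from the isometric embeddings of the factors. The parity bookkeeping, including the unavoidable ``$+1$'' in the even case, matches the paper's reasoning.
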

	
	\begin{proof}
		Observe that the metric space associated to $G$ (that is, the word metric on this free product) is an acyclic gluing of metric spaces $M_{\alpha}$ where each $M_{\alpha}$ is isometric to $A$ or to $B$.
		It is clear that $\mins(G) \ge \max (\mins(A), \mins(B))$.
		Let $k$ be an integer such that $\max (\mins(A), \mins(B)) \leq 2k-1$, then $\OR_A(2k-1) < 2k-1$ and $\OR_B(2k-1) < 2k-1$. 
		From Corollary \ref{cor:acyclicgirth} it follows that $\OR_G(2k-1) < 2k-1$ and $\mins(G) \le 2k-1$.
	\end{proof}
	
	If we do not assume that the maximum is odd,  the ``plus one'' in the formulation is essential.
	For example, $\mins(\mathbb{Z}) = 2$ and $\mins(\mathbb{Z}*\mathbb{Z})=3$ (see Lemma \ref{le:notrayline}).

	\subsection{Order breakpoint of finite graphs.}
	
	In this subsection we consider both finite and infinite graphs, but the main goal is to classify finite graphs depending on their order breakpoints.
	Let $\Gamma$ be a graph, finite or infinite, with edges of length $1$. 
	We consider $\Gamma$ as a geodesic metric space with edges included. 
	Given an order on the set of vertices of $\Gamma$, we define an order on all points of $\Gamma$ as follows.
	
	\begin{definition}
		\label{def:star}
		Given an order $T$ on the set of vertices $V$ of $\Gamma$, 
		we define the order ${\rm Star}(T)$ on $\Gamma$. We subdivide each edge $AB$ into two halves of length $1/2$, and we assume
		that the middle point belongs to the half of $A$ if $A<_T B$.
		We call a union of a vertex and all the outgoing half-edges a {\it star figure}.			
		For each point $x$ in $\Gamma$ we have defined therefore the vertex $v(x)$ (to the half-interval of which it belongs).
		If $v(x)<_T v(y)$ we put $x <_{{\rm Star}(T)} y$. On points with the 
		same $v(x)$ we consider a hierarchical order on the corresponding star figure, assuming that the vertex is smaller than the other points. 
		So we have an order on the set of half-edges,
		and each half-edge is ordered by identifying it with the interval $[0,1]$ or $[0,1)$, where 0 corresponds to the vertex.
	\end{definition}
	A comment on the definition above: the choice to which half edge the middle point belongs, and the order  we have chosen for half edges with a common vertex (the corresponding star figure) is not important.
	We have made this choice to fix an order on star figures which we will study in the sequel.
	
	Lemma \ref{lem:edgesnotedges} below shows that for  the asymptotic behaviour of the order ratio function it does not matter whether we consider the graph together with the edges or only the metric
	on the vertices. In a more general setting,  the asymptotic behavior of the order ratio function 
	depends on both the local and global geometry of the metric space.

	\begin{lemma}\label{lem:edgesnotedges}
		For any graph $\Gamma$ and all
		$k\ge 2$  it holds that
		$$
		\OR_{\Gamma, \Star(T)}(k) \leq 8 \OR_{V,T}(k) + 4.
		$$
	\end{lemma}
	This lemma is proven by  associating to a path in $(\Gamma, \Star(T))$ a path that passes through the centers of corresponding star figures. For details of the proof see Appendix \ref{app:edgesnotedges}.
	We do not use this lemma in the sequel, but we will use some other properties of the $\Star$ orders.

	It is clear that order breakpoint can change
	by applying  ${\rm Star}$ operation: for example, $\mins$ of the tripod is $3$ while $\mins$ of the set of its four vertices is $2$. However, this operation provides a bound on $\mins$ of the graph.
	
	\begin{lemma}\label{lem:stargirth}
		If $\Gamma$  is any (finite or infinite)
		graph, then $\mins(\Gamma, {\rm Star}(T))$ is at most $\max (\mins({V, T}), 4)$.
		In particular, if  $\Gamma$ is a finite graph, then $\mins(\Gamma, {\rm Star}(T)) \le 4$.
	\end{lemma}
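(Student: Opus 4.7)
The plan is to argue by contradiction: set $s = \max(\mins(V,T), 4)$ and suppose that $(\Gamma, \Star(T))$ admitted a sequence of snakes $x_1^{(n)} <_{\Star(T)} \dots <_{\Star(T)} x_{s+1}^{(n)}$ with diameters $a_n$, widths $b_n$ and elongations $a_n/b_n\to\infty$; by Lemma~\ref{lem:snake} this would falsify the desired bound. Let $v_i^{(n)} = v(x_i^{(n)})$; the sequence $v_1^{(n)} \le_T \cdots \le_T v_{s+1}^{(n)}$ is weakly $T$-increasing, and I group it into maximal runs of equal consecutive values, which I call blocks. The whole argument is then an analysis of this block structure.

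First I would show every block has size at most $3$. Restricted to a single star figure, $\Star(T)$ is a hierarchical order on a metric tree, so Lemma~\ref{lem:hierarchicaltrees} (together with Corollary~\ref{re:anytree} for possibly infinite stars) gives $\OR \le 2$ there, and Lemma~\ref{lem:snake} then forbids any $4$-point snake of unbounded elongation inside a single star figure. But any block of size $\ge 4$ would supply exactly such a snake: four consecutive snake-points have width $\le b_n$ and consecutive distances $\ge a_n - b_n$, hence elongation $\to\infty$. This is the geometric source of the threshold $4$ in the statement.

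Next I would split on whether $(a_n)$ is bounded. In the unbounded case (passing to a subsequence) we have $a_n - b_n > 1$ for large $n$, whereas two consecutive snake-points lying in a common block lie in one star figure and thus at distance $\le 1$; so every block is a singleton and the $v_i^{(n)}$ are pairwise distinct and strictly $T$-ordered. Because $|d(x_i^{(n)}, x_j^{(n)}) - d(v_i^{(n)}, v_j^{(n)})| \le 1$, these form a snake in $(V,T)$ of diameter $\ge a_n - 1$ and width $\le b_n + 1$, whose elongation also tends to infinity -- contradicting $s \ge \mins(V,T)$.

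The main obstacle is the bounded-$a_n$ case (so $b_n \to 0$). Here the key geometric input is that, for distinct adjacent vertices $u,u'$, any two points $y$ in the star figure of $u$ and $y'$ in the star figure of $u'$ with $d(y,y') < 1/2$ must lie on the edge $uu'$, each within distance $d(y,y')$ of its midpoint, while for non-adjacent $u,u'$ one has $d(y,y') \ge 1/2$; equivalently, the closures of three star figures with pairwise distinct centres have empty triple intersection. Combined with the block-size bound $\le 3$ and $s+1 \ge 5$, I would dispose of the remaining configurations: two-block patterns (for instance $(3,2)$, $(2,3)$, $(3,3)$) confine every point to a $b_n$-neighbourhood of the midpoint of a single edge, forcing diameter $\le 2b_n$ and elongation $\le 2$; block patterns in which some parity class meets at least three distinct star figures force the corresponding same-parity triple to cluster simultaneously at three pairwise midpoints of three distinct edges, which is impossible for $b_n \to 0$; and the exceptional patterns $(3,1,1)$ and $(1,1,3)$ (only two distinct vertices in one parity) are handled by noting that hierarchicality inside the size-$3$ block forces its three points onto one half-edge near the midpoint of the associated edge, after which the opposite-parity same-parity estimate forces an auxiliary vertex to lie within $O(b_n)$ of that midpoint, contradicting the fact that any such vertex is at distance $\ge 3/2$ from it. This completes the proof, and the ``in particular'' clause is immediate since $\mins(V,T) = 2$ whenever $V$ is finite.
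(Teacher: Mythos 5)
Your argument is correct, but it is organized quite differently from the paper's proof, and it is considerably longer. The paper splits into just two cases according to whether the centres $v(x_i)$ of the snake points are all distinct or not. When they are all distinct, it observes that the width is automatically at least $1/2$ (three same-parity points in three distinct star figures cannot be pairwise close), so passing to the centres costs only a bounded factor in elongation \emph{regardless of whether the diameter is large}, and one immediately gets a snake in $(V,T)$; when two snake points share a star figure, convexity of the star figure forces a same-parity pair into it, giving $D\le 1+2\delta$, and a short localization argument (all points land in one extended star, then convexity of half-edges produces a $3$-point sub-snake of elongation $1$) bounds the elongation by an absolute constant. You instead split on whether the diameters $a_n$ are bounded: your unbounded case parallels the paper's first case but uses $a_n\to\infty$ rather than the width bound $\delta\ge 1/2$ to absorb the additive errors, while your bounded case ($b_n\to 0$) replaces the paper's short localization by an explicit enumeration of block patterns ($(3,2)$, $(2,3)$, $(3,3)$, the ``parity meets three stars'' configurations, and the exceptional $(3,1,1)$, $(1,1,3)$), resting on the correct geometric fact that two points of distinct star figures at distance $<1/2$ must sit on a common edge near its midpoint. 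I checked that your enumeration is exhaustive (with block sizes $\le 3$ and at least five points, only these patterns avoid the midpoint-clustering contradiction) and that each case closes, including the exceptional ones via convexity of half-edges in the order of Definition~\ref{def:star}; so the proof works, and its benefit is that it makes the midpoint geometry and the constants explicit, at the price of a case analysis the paper avoids. Two small points to tidy up: consecutive snake points are at distance at least $a_n-2b_n$ (not $a_n-b_n$), which changes nothing; and in ruling out blocks of size $4$ you should quote the \emph{quantitative} content of the proof of Lemma~\ref{lem:snake} (as the paper does, e.g.\ elongation at most $12$ for $4$-point snakes in a tree with a hierarchical order), since the star figures vary with $n$ and you need a bound uniform over them, not just the statement that each fixed star figure admits no snakes of unbounded elongation.
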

	
	\begin{proof}
		Let $C$ be a constant such that any star figure does not
		admit snakes on $4$ points of elongation greater
		than $C$ with respect to the order ${\rm Star}(T)$ (from the proof of Lemma \ref{lem:snake} it follows that for any 
		hierarchical order on a metric tree, the elongation of snakes on $4$ points is bounded by $C = 10$). 
		Let $k\ge 5$ be such that
		$(V,T)$ does not admit snakes of arbitrarily large elongation on $k$ points.
		Let us  show that 
		$(\Gamma, {\rm Star}(T))$ does not admit snakes of arbitrarily large elongation on $k$ points.
		Suppose that this is not the case and $\Gamma$ admits such snakes.
		For each point $x_i$ of the snake $(x_i)$, $1\le i \le k$,  consider the vertex $v_i\in V$
		such that $x_i$ belongs to the star figure of $v_i$.
		If the elongation of the snake is 
		$> C$,
		then it is clear that among the vertices $v_i$ there are at least  two distinct points.
		
		Let $D$ and $\delta$ be the diameter and the width of the snake $(x_i)$, $1 \le i \le k$.
		First suppose that all $v_i$, $1\le i \le k$ are distinct.
		Observe  that $\delta \ge 1/2$. Indeed, $x_1$, $x_3$, $x_5$ belong to three distinct star figures,
		so not all the pairwise distances between these points can be less than $1/2$.
		We can assume that the elongation of the snake is $\ge 4$, hence $D \ge 4 \delta \ge 2$.
		Observe that in this case  the corresponding vertices $v_i$ of $\Gamma$, $1 \le i \le k$,   form a snake of  diameter $D'$ satisfying  $D'\ge D-1 \ge D/2$ and
		of width $\delta' \le \delta+1 \le 3 \delta$. We see that under our assumption the elongation of the snake $(v_i)$ 
		is at least $1/6$ of the elongation of $(x_i)$.
		
		Now observe that if there exist at least two points of the snake belonging to the same star figure $S$, then (since the star figure is convex with respect to the order on $\Gamma$) there exist two points of indices of different parity belonging to $S$.  We see that $D\le 1+2 \delta$. Assume that the elongation $D/\delta \ge 6$. We have $6 \delta  \le D \le 1+ 2\delta$, and
		$\delta \le 1/4$.
		We conclude that all points of our snake are in the ``extended'' star figure $S'$ with edges of length $1$.
		We know that not  all $x_j$ belong to $S'$, 
		and that this star figure is convex with respect to the Star order. Thus, reversing if  necessary our order we can assume that $x_1$ does not belong to this star figure.
		We know that the distance between $x_1$ and $x_3$ and between $x_3$ and $x_5$ is at most $1/4$.
		This implies in particular that these points belong to the same edge  of $\Gamma$ as $x_1$.
		
		If $x_3$ belongs to the same  half-edge as $x_1$, then,
		taking in account convexity of this half-edge, we conclude that  $x_2$ also belong to the same half-edge. 
		In this case the elongation of the 
		$3$-point snake $(x_1,x_2,x_3)$ is $1$. 
		Its diameter is $\ge D-2\delta$ and its width is $\le \delta$, where $D$ and $\delta$ are the parameters of the initial snake.
		Hence $1 \ge (D-2\delta)/\delta$, and the elongation $D/\delta$ of the initial snake is at most $3$. 
		
		Finally, if $x_3$ does not belong to the same (external) half-edge as $x_1$, 
		then $x_5$ can not belong to the same interval as $x_1$ because the half edge is convex with respect to the order. 
		Then $x_3$ and $x_5$ belong to the same half-edge.
		In this case we obtain a snake 
		$(x_3, x_4, x_5)$ on $3$ points with elongation $1$ and again get a contradiction.
		
		We have proved the first claim of the lemma. The second claim follows from the fist one, since it is clear that finite sets do not admit snakes on three points of large elongation.
		
	\end{proof}
	
	For the rest of this section, we assume that our graph is finite.
	As before, we consider a metric space of a finite graph $\Gamma$, with edges (not only vertices) included.

	Now we compute the order breakpoint for such graphs. It is clear that $\mins$ is equal to $2$ if $\Gamma$ is homeomorphic to an interval.
	We have seen already that $\mins$ is equal to $3$ for a circle or a tripod. We start with an example with $\mins(\Gamma) =4$. 	
	The {\it domino graph} is the graph with 6 vertices and 7 edges, shown at Figure \ref{pic:domino}.

	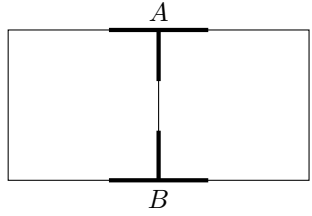
\begin{figure}[!htb] 
		\centering
		\begin{tikzpicture}[scale = 2]
		\draw (0,0) -- (0,1) -- (1,1) -- (1,0) -- (-1,0) -- (-1,1) -- (0,1);
		\draw[ultra thick] (-0.33,0) -- (0.33,0);
		\draw[ultra thick] (-0.33,1) -- (0.33,1);
		\draw[ultra thick] (0,0) -- (0,0.33);
		\draw[ultra thick] (0,1) -- (0,0.66);
		\node[above] at (0, 1) {$A$};
		\node[below] at (0, 0) {$B$};
		\end{tikzpicture}
		\caption{Two tripods in a domino graph.}
		\label{pic:domino}
	\end{figure}

	\begin{lemma}\label{lem:domino}[Order breakpoint of a domino graph]
		Let $\Gamma$ be a graph homeomorphic to the domino graph. 
		Then for any order $T$
		on $\Gamma$, this graph admits snakes on $4$ points of large elongation. More precisely, for any $\varepsilon>0$ there exists a snake on 4 points of diameter $\ge 1/3-\varepsilon$ and of width at most $2 \varepsilon$.
	\end{lemma}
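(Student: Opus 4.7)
The plan is to use the two disjoint tripods centered at $A$ and $B$ shown in Figure~\ref{pic:domino}: each has three legs of length $1/3$ along the three paths from $A$ to $B$ (whose lengths are assumed to be at least $1$). A triangle-inequality argument, together with $d(A,B)\ge 1$ and the tripods having radius $1/3$, shows that any point of the tripod at $A$ lies at distance at least $1/3$ from any point of the tripod at $B$.

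For every $\varepsilon > 0$, I apply Corollary~\ref{cor:tripod} to each tripod (with the restriction of $T$), obtaining a 3-point snake $(a_1, a_2, a_3)$ inside the $A$-tripod and a 3-point snake $(b_1, b_2, b_3)$ inside the $B$-tripod, each of diameter $\ge 1/3 - \varepsilon$ and width $\le 2\varepsilon$. In particular the pairs $\{a_1, a_3\}$ and $\{b_1, b_3\}$ have diameter at most $2\varepsilon$, while every cross-distance $d(a_i, b_j)$ is at least $1/3$.

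Among the six points $\{a_1, a_2, a_3, b_1, b_2, b_3\}$ I then extract a four-point $T$-alternating subsequence $w_1 <_T w_2 <_T w_3 <_T w_4$ whose positions $1,3$ lie in one of the two close pairs and whose positions $2,4$ lie in the other. Any such quadruple automatically satisfies width $\le 2\varepsilon$ (from the two close pairs) and diameter $\ge 1/3 - \varepsilon$ (from the cross-tripod distance). A short case analysis on how $\{a_1, a_3\}$ interleaves with $\{b_1, b_3\}$ in $T$ produces such an alternation in most configurations.

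The main obstacle will be the non-interleaved case, in which the two close pairs sit $T$-consecutively (say all three $a$-points precede all three $b$-points in $T$). In that case I would exploit the flexibility in Corollary~\ref{cor:tripod} — the corollary applies to every $\varepsilon$ and, by choosing different antipodal pairs on the auxiliary circle, yields snakes clustered at different pairs of tripod endpoints — to reselect the two 3-point snakes so that their close pairs are forced to interleave, using that the three parallel $A$-to-$B$ paths of $\Gamma$ provide enough room to place the clusters compatibly. I expect this re-selection step to be the most delicate part of the argument.
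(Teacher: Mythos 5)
There is a genuine gap, and it sits exactly where you flagged it: the non-interleaved case is not a delicate technicality to be fixed by re-selection, it is the case your approach cannot handle at all. Consider an order $T$ in which every point of the tripod at $A$ precedes every point of the tripod at $B$ (such orders exist and are perfectly admissible). Then no four points of the union of the two tripods can alternate between the two close pairs: since every cross-distance between the tripods is at least $1/3$, a $4$-point snake of width $\le 2\varepsilon$ would need positions $1,3$ in one tripod and $2,4$ in the other, which contradicts the prefix/suffix structure of the order. Moreover, Corollary \ref{cor:tripod} (via Lemma \ref{le:examplecircle}) only guarantees that \emph{some} antipodal pair of the auxiliary circle carries a snake; it gives you no control over \emph{which} pair, so there is no ``flexibility'' to exploit, and even with full control the interleaving would remain impossible for such an order. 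In this configuration the required $4$-point snake must use points lying outside the two tripods (on the connecting paths), so any argument confined to the six points $a_1,\dots,b_3$, or even to the two tripods, cannot close the proof.

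The paper's proof supplies precisely the missing mechanism. It takes only one $3$-point snake $x_1<_T x_2<_T x_3$ in the tripod at $A$ (your $(a_1,a_2,a_3)$), and then uses connectivity of the domino graph: there is a path from $x_2$ to any point of the tripod at $B$ staying at distance at least $1/3-\varepsilon$ from both $x_1$ and $x_3$. If some point of that path fails to lie $T$-between $x_1$ and $x_3$, then two points $u,v$ of the path at distance at most $2\varepsilon$ from each other, one inside and one outside the order interval, yield the snake (for instance $x_1<_T u<_T x_3<_T v$), of width at most $2\varepsilon$ and diameter at least $1/3-\varepsilon$ because $u,v$ stay far from $x_1,x_3$. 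If no such escape occurs, then the whole second tripod is $T$-between two points of the first; by the symmetric argument the first tripod is $T$-between two points of the second, which is a contradiction. This ``follow a path avoiding a neighborhood of $x_1,x_3$ and catch the order jump'' step is the idea your proposal lacks, and without it the plan fails on the non-interleaved orders described above.
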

	
	\begin{proof} Consider two tripods ${\rm Tr}_1$ and ${\rm Tr}_2$, with segments  of length $1/3$, with their joint vertices at $A$ and $B$ (see Figure \ref{pic:domino}).
		By Corollary \ref{cor:tripod}
		we know that in ${\rm Tr}_1$  there exists a snake on  three points of diameter at least $1/3-\varepsilon$ and of width at most $2 \varepsilon$. 
		Denote its points $x_1$, $x_2$, $x_3$, here $x_1<_T< x_2<_T x_3$.
		Without loss of generality, the distance between $x_1$ and $x_2$ is $\ge 1/3 - \varepsilon$.
		Observe that there is a continuous path from $x_2$ to any point of ${\rm Tr}_2$ which stays at distance at least $1/3-\varepsilon$ from $x_1$. If $\Gamma$ equipped with 
		our order $T$ does not admit
		snakes on $4$ points of width at most $2\varepsilon$ and diameter at least $1/3-\varepsilon$, we would see that any point on this continuous path stays (with respect to the order $T$) between $x_1$ and $x_3$. 
		This would show that all points of ${\rm Tr}_2$ are between some two points of the first tripod ${\rm Tr}_1$.
		The same argument shows that all points of the first tripod are between some two points of the second one, and this contradiction implies the claim of the lemma.
	\end{proof}
	
	An argument  similar to the argument above about the order breakpoint of the domino graph will be used later to estimate the order breakpoint of infinitely presented groups.
	
	\begin{definition}\label{def:cactus}
		A {\it cactus graph} is a graph homeomorphic to an acyclic gluing of circles and intervals.
	\end{definition}
	
	\begin{lemma}\label{le:cactus}
		Let $\Gamma$ be a finite connected graph that does not contain a subgraph homeomorphic to the domino graph, then $\Gamma$ is a cactus graph. 
	\end{lemma}
	
	\begin{proof}
		Any tree is a cactus graph. If $\Gamma$ is not a tree, then $\Gamma$ contains a simple cycle $c$.  Observe that either
		any two vertices $A,B$ of this cycle belong to distinct connected components
		of $(\Gamma \setminus  c) \bigcup \{A,  B\}$, in this case we can argue by induction claiming that the connected components of $\Gamma \setminus c$ are acyclic gluings.
		Or there exist  $A,B$ on $c$, and a continuous path from $A$ to $B$ not passing through $c$, and this provides a homeomorphic image of a domino graph.
	\end{proof}
	
	Now we formulate a corollary of Lemmas \ref{lem:stargirth}, \ref{lem:domino} and \ref{le:cactus}.
	
	\begin{thm}\label{cor:finitegraphs}
		Let $\Gamma$ be a finite connected graph, with edges included. Then there are $3$ possibilities
		\begin{enumerate}
			\item $\Gamma$ is homeomorphic to an interval, then  $\mins(\Gamma) =2$.
			
			\item $\Gamma$ is a cactus graph, but not homeomorphic to an interval, then $\mins(\Gamma) =3$.
			
			\item Otherwise, $\mins(\Gamma) =4$.
		\end{enumerate}
	\end{thm}
	
	\begin{proof}
		If $\Gamma$ is homeomorphic to an interval, then the claim is straightforward.
		
		Otherwise, if $\Gamma$ does not contain a homeomorphic copy of a domino graph, then $\Gamma$ is a cactus graph, as follows from Lemma \ref{le:cactus}. 
		Observe that in this case either there is at least one circle in this wedge sum decomposition, or $\Gamma$ is a tree which contains an isometric  copy of a tripod.
		In both  cases, we know that for a subset of $\Gamma$  it holds that $\mins \ge 3$, and hence $\mins(\Gamma) \ge 3$. Also observe that $\mins(\Gamma) \le 3$, as follows from Corollary \ref{cor:acyclicgirth}. 	
		
		If $\Gamma$ contains a homeomorphic image of the domino graph, Lemma \ref{lem:domino} implies that $\mins(\Gamma) \ge 4$.  By Lemma \ref{lem:stargirth} we know that $\mins(\Gamma) \le 4$, and we can conclude that
		$\mins(\Gamma) = 4$.
		
	\end{proof}
	
	\section{Spaces and Groups with small order breakpoint}
	
	We recall that given a metric
	space $M$
	and an order $T$ on $M$, $\mins({M,T})$ is the minimal $s$ such that $\OR_{M,T}(s)<s$. One can define
	$\mins(M)$ as the minimum of $\mins({M,T})$, where the minimum is taken over all orders $T$ on $M$.
	
	In this section we will be mostly interested in uniformly discrete metric spaces. 
	Given a graph $\Gamma = (V,E)$, we consider the metric space $V$ with graph metric (all the edges have length $1$). 
	For a group $G$ with generating set $S$ we consider the word metric on $G$. 
	
	It is straightforward that for any metric space $M$ we have $\mins(M) \ge 2$.
	We know (see Lemma \ref{le:snakesqi}) that quasi-isometric uniformly discrete metric spaces have the same value of $\mins$.
	In particular, 
	for any finite metric space $M$, $\mins(M, T) = 2$ for any order $T$, because there are finitely many triples of points and elongation of snakes on $3$ points is bounded (see Lemma \ref{lem:snake}).  
	
	The following lemma characterizes spaces (quasi-isometric to geodesic ones) with this minimal possible value of $\mins$.
	
	\begin{lemma} \label{le:notrayline}
		Let $M$ be quasi-isometric to  a geodesic metric space, and not quasi-isometric to a point, a ray or a line. Then $\mins \ge 3$. If we assume moreover that $M$ is
		uniformly discrete, then $\mins = 2$ 
		if and only if $M$ is either bounded or quasi-isometric to a ray or to a line.
	\end{lemma}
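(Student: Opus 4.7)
The plan is to prove the two claims in sequence. The easy half of the biconditional yields $\mins \le 2$ for bounded or line-like spaces by a direct argument, and the main content is the lower bound $\mins \ge 3$, which I would derive from M.~Kapovich's tripod theorem combined with the snake argument of Corollary \ref{cor:tripod}.

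For the easy direction, assume $M$ is uniformly discrete. If $M$ is bounded, let $D = \diam(M) < \infty$ and let $\delta > 0$ be a separation constant. Every three-point subset of $M$ has width at least $\delta$ and diameter at most $D$, so elongations of snakes on three points are bounded by $D/\delta$. By Lemma \ref{lem:snake} this forces $\OR_{M,T}(2) < 2$ for every order $T$, hence $\mins(M) = 2$ (or $\mins = 1$ when $M$ is a singleton). If $M$ is quasi-isometric to $\mathbb{N}$ or $\mathbb{Z}$, the natural total order satisfies $l_T(X) = l_{\opt}(X)$ for every finite $X$, giving $\OR = 1 < 2$; by the quasi-isometric invariance of $\mins$ in Lemma \ref{le:snakesqi} we conclude $\mins(M) = 2$.

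For the first claim, assume $M$ is quasi-isometric to a geodesic metric space $X$ that is not quasi-isometric to a point, a ray, or a line. I would invoke M.~Kapovich's tripod theorem: there is a fixed $\varepsilon > 0$ such that for every $R > 0$ the space $X$ contains an $(R,\varepsilon)$-tripod, namely three points $x_1, x_2, x_3$ and a centre $c$ with $d(c, x_i) \ge R$ and $d(x_i, x_j) \ge d(c, x_i) + d(c, x_j) - \varepsilon$ for $i \ne j$. The three geodesics from $c$ to the $x_i$ then form a subset of $X$ which is within additive error $\varepsilon$ of an isometric tripod with arms of length $\ge R$. Given any order $T$ on $M$, I would transport such a tripod to $M$ via the quasi-isometry, obtaining a configuration with arm length $\asymp R$ and additive tripod defect depending only on $\varepsilon$ and the quasi-isometry constants. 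I then rerun the argument of Corollary \ref{cor:tripod}: a circle of length proportional to $R$ maps $2$-to-$1$ onto the three arms in such a way that antipodal points on the circle map to points on different arms at distance $\asymp R$; pulling back $T$ and applying Lemma \ref{le:examplecircle} produces three $T$-ordered points whose image forms a snake of diameter $\asymp R$ and width only $O(\varepsilon) + O(1)$. Letting $R \to \infty$ yields snakes on three points of arbitrarily large elongation, so by Lemma \ref{lem:snake} $\OR_{M,T}(2) = 2$ for every $T$, and $\mins(M) \ge 3$. Under uniform discreteness, if $M$ is neither bounded nor quasi-isometric to a ray or line, then $M$ is not quasi-isometric to a point, a ray, or a line and the first claim gives $\mins \ge 3$; this is the nontrivial direction of the second claim.

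The principal obstacle, which is why the paper defers this proof to Appendix B, is Kapovich's tripod theorem itself. Once that result is in hand, the remaining task is just careful bookkeeping of additive errors when transporting the tripod through the quasi-isometry, ensuring that the width of the resulting snake stays bounded while the diameter grows unboundedly with $R$.
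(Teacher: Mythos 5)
Your route is the same as the paper's: the easy direction via the quasi-isometric invariance of $\mins$ (Lemma \ref{le:snakesqi}) together with the fact that a ray or a line carries an order with $\OR=1$, and the lower bound via Kapovich's tripod theorem fed into the circle-to-tripod snake argument of Lemma \ref{le:examplecircle} and Corollary \ref{cor:tripod}; the paper packages exactly this as Lemma \ref{lem:containstripod} combined with the second claim of Lemma \ref{lem:discretecircletripod}. Your handling of the bounded case (elongations of snakes on three points bounded by $\diam(M)/\delta$, hence $\OR_{M,T}(2)<2$ by Lemma \ref{lem:snake}) and of the ray/line case is correct, as is the reduction of the ``only if'' direction of the second claim to the first claim.

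The one genuine problem is your statement of what Kapovich's theorem provides. You assert points $c,x_1,x_2,x_3$ with $d(c,x_i)\ge R$ and $d(x_i,x_j)\ge d(c,x_i)+d(c,x_j)-\varepsilon$ for all $i\ne j$, i.e.\ an almost isometrically embedded tripod, and later that the three geodesics from $c$ are ``within additive error $\varepsilon$ of an isometric tripod''. That statement is false: in $\mathbb{R}^2$ (geodesic, and not quasi-isometric to a point, a ray or a line) any three points at distance at least $R$ from $c$ contain a pair whose directions from $c$ make an angle at most $2\pi/3$, and the law of cosines gives for that pair $d(x_i,x_j)\le d(c,x_i)+d(c,x_j)-R/6$, so no such configuration exists once $R>6\varepsilon$. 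Kapovich's actual theorem (Claim \ref{kapovich}) produces an $(R,\varepsilon)$-tripod in which only the pair $Y,Z$ is nearly additive through $O$; the third arm is merely a near-shortest curve from $X$ to the $\varepsilon$-geodesic $\mu$, and extracting an embedded abstract tripod from this is precisely Claim \ref{le:Bclaim1}, which yields only the multiplicative constant $1/3$ on the lower side (and $1/3$ is optimal, as the circle example in Appendix \ref{sec:tripods} shows). Your argument does survive the correction: the width of the resulting snake in the geodesic space is controlled by the additive upper error alone, so it stays bounded after transporting through the quasi-isometry to $M$, while the diameter shrinks only by a bounded factor, and elongations still tend to infinity. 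But as written the key intermediate claim is false, and the missing repair --- passing from Kapovich's $(R,\varepsilon)$-tripod to the $(1/3,\varepsilon)$-quasi-isometrically embedded tripod of Lemma \ref{lem:containstripod} --- is exactly the content of the paper's Appendix B, which cannot be bypassed by quoting the stronger version of the theorem.
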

	A particular case 
	for the last claim of  Lemma
	\ref{le:notrayline}
	above is when  $\Gamma = (V,E)$ is a connected graph, finite or infinite (as we have mentioned, a convention of this section is that the length of edges is equal to $1$).
	Then the order breakpoint $\mins(\Gamma) \leq 2$ if and only if the graph is quasi-isometric to a point, a ray or a line.  
	Moreover, it can be shown in this case that if  $\Gamma$ is not quasi-isometric to a point, a ray or a line, then for any order $T$ the space $(V, T)$ admits an $(\infty, \rm{Bounded})$-sequence of snakes on $3$ points.
	
	Before we prove this lemma, we formulate a discrete version
	of Lemma \ref{le:examplecircle} and Corollary \ref{cor:tripod}, which we will use also later in this section.
	\begin{lemma}\label{lem:discretecircletripod}
		\begin{enumerate}
			\item Let $M$ be a set of $n$ points of a
			circle of length $L$, with distance $L/n$ between consecutive points.
			Let $T$ be an order on $M$. Then
			$(M,T)$ admits a snake $(x_1, x_2, x_3)$ on three points,
			of diameter at least $L/2 -L/n$ and of
			width at most $L/n$.
			
			\item Consider a tripod with segments of length $L$. Let $N$ be a set of $3n+1$ points, containing the center of the tripod and $n$ points on each segment of the tripod, with distance between consecutive points $L/n$.
			Let $T$ be an order on $N$. Then $(N,T)$ admits a snake on $3$ points, of diameter at least $L-L/n$ and of width at most $L/n$.
		\end{enumerate}
	\end{lemma}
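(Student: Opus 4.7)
The plan is to adapt the pigeonhole arguments of Lemma \ref{le:examplecircle} and Corollary \ref{cor:tripod} to the discrete setting, handling each part in turn.

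For part (1), I would enumerate $M = \{y_1, \dots, y_n\}$ in the natural cyclic order $T_0$ along the circle and set $m = \lfloor n/2 \rfloor$, so that $y_{i+m}$ (indices modulo $n$) is a near-antipode of $y_i$ at circle-distance at least $L/2 - L/(2n)$. Define $\phi \colon M \to \{0,1\}$ by $\phi(y_i) = 0$ if $y_{i+m} <_T y_i$ and $\phi(y_i) = 1$ otherwise. The next step is to show that $\phi$ takes both values: for even $n$ this is immediate from $\phi(y_i) + \phi(y_{i+m}) = 1$, while for odd $n$ one uses that $\gcd(m,n) = 1$, so if $\phi \equiv 0$ then iterating $i \mapsto i+m$ would produce a cyclic $T$-decreasing sequence through all the $y_i$, which is absurd. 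Hence there is a cyclically consecutive pair $y_i, y_{i+1}$ with $\phi(y_i) \neq \phi(y_{i+1})$, giving $T$-inequalities for $(y_i, y_{i+m})$ and $(y_{i+1}, y_{i+m+1})$ that point in opposite directions. A brief case split on the $T$-position of $y_{i+1}$ relative to $y_{i+m}$ then produces one of the triples $(y_{i+m+1}, y_{i+1}, y_{i+m})$ or $(y_i, y_{i+m}, y_{i+1})$ as a $T$-ordered snake whose outer points are cyclically consecutive on the circle (bounding the width by $L/n$) while both middle-to-outer pairs are near-antipodal (giving diameter at least $L/2 - L/n$).

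For part (2), I would imitate the strategy of Corollary \ref{cor:tripod} at the discrete level. Let $N$ be the equally spaced set of $6n$ points on a circle of length $6L$, and let $\varphi \colon N \to M$ be the 1-Lipschitz surjection that wraps the circle around the tripod in the pattern center $\to$ tip$_1 \to$ center $\to$ tip$_2 \to$ center $\to$ tip$_3 \to$ center. A direct verification shows that any two antipodal points on $N$ (at circle-distance $3L$) are sent to a pair of points of $M$ at distance exactly $L$, while the center of the tripod has three preimages, each tip has one, and every other arm point has two. Given an order $T$ on $M$, I would fix a pullback $T_N$ on $N$ and apply part (1) to $(N, T_N)$ to obtain a snake $y_1 <_{T_N} y_2 <_{T_N} y_3$ of circle-diameter at least $3L - L/n$ and circle-width at most $L/n$. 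The 1-Lipschitz bound then gives $d_M(\varphi(y_1), \varphi(y_3)) \leq L/n$, while each of the two consecutive circle-pairs is within circle-distance $L/n$ of a true antipodal pair, so the triangle inequality combined with the antipodal-to-$L$ property yields $d_M(\varphi(y_j), \varphi(y_{j+1})) \geq L - L/n$ for $j = 1, 2$. The pullback property forces $\varphi(y_1) <_T \varphi(y_2) <_T \varphi(y_3)$, and since the computed distances are positive (for $n \geq 2$) the three images are distinct, yielding the desired snake.

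The most delicate step will be the pigeonhole argument of part (1) for odd $n$, where the pairing $y_i \mapsto y_{i+m}$ is no longer an involution and the cyclic $\gcd$ argument must be invoked to rule out $\phi \equiv 0$; everything else is a routine discretization of the continuous proofs, provided the circle in part (2) is chosen with the correct number of points so that the antipodal structure on $N$ matches the tripod distances exactly.
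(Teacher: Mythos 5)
Your proposal is correct, but it takes the route the paper only alludes to rather than the one it actually writes down. The paper's proof does not redo the pigeonhole argument: it extends the given discrete order to the $\Star$ order (Definition \ref{def:star}) on the continuous circle, respectively tripod, invokes Lemma \ref{le:examplecircle} and Corollary \ref{cor:tripod} as black boxes to produce a continuous snake near two antipodal points, and transfers it back to the discrete set, using that the $\Star$ order compares points through their associated vertices and that distances are perturbed only by the mesh $L/n$. You instead discretize those two arguments directly: the near-antipode map $i \mapsto i+\lfloor n/2\rfloor$ with the involution/gcd pigeonhole in part (1), and a discrete wrap of a $6n$-point circle of length $6L$ around the tripod with a pullback order in part (2). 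Both work; yours is self-contained and produces explicit snakes with the stated constants, at the price of redoing the case analysis (note you should also rule out $\phi\equiv 1$ for odd $n$, which is the symmetric gcd argument, and treat the degenerate cases $n\le 3$ of part (1) separately), whereas the paper's version is shorter but leaves the transfer step implicit. One point you should make explicit: in part (2) the bound $d(\varphi(y_j),\varphi(y_{j+1}))\ge L-L/n$ for both $j=1,2$ does not follow from the bare conclusion of part (1), since diameter $\ge 3L-L/n$ and width $\le L/n$ alone only give $L-2L/n$ for one of the two consecutive pairs; it relies on the fact that your part (1) construction on an even circle makes the middle point an exact antipode of one outer point, with the other outer point one step away. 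You gesture at this in your closing remark, but it should be stated in the proof (for instance by strengthening part (1) to record this antipodal structure). Finally, your letter $N$ for the auxiliary circle clashes with the lemma's $N$ for the tripod point set; rename one of them.
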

	\begin{proof}
		The claims of the lemma can be proven in the same way as Lemma \ref{le:examplecircle} and Corollary \ref{cor:tripod}. To avoid referring to these proofs, we observe that it follows from the claims of this lemma
		and this corollary. Indeed,
		if $T_1$ and $T_2$ are some orders on $M$ and $N$, consider the corresponding $\Star$ orders on the circle (which is the union of points of $M$ and edges between neighbouring points) and on the tripod.
		Let us call these star orders $T'_1$ and $T'_2$. By Lemma \ref{le:examplecircle} we know that $(M,T'_1)$ admits a snake on three points in $\varepsilon$-neighborhoods of two antipodal points, and by Corollary \ref{cor:tripod} we know that $T'_2$ admits a snake on $3$
		points of diameter close to $L$ and of width at most $L/n$. The claims for $T_1$ and $T_2$ follow by mapping each point in each snake to the center of its star figure (noting that the points are in distinct star figures).
	\end{proof}

	The statement of Lemma \ref{lem:containstripod} below can be obtained
	as a corollary of a result of
	M.~Kapovich  \cite{kapovich}, see
	Appendix \ref{sec:tripods}.
	
	In this lemma we use the following notation for tripods. A tripod $T_R$ consists of 3 segments of length $R$ glued
	together by an endpoint.
	
	\begin{lemma}\label{lem:containstripod}
		There exists $C: 0<C < 1$
		such that the following holds. If $M$ is a geodesic metric
		space which is not quasi-isometric to a point, a ray or a line, then
		for any $\varepsilon>0$,
		$M$ admits
		a  quasi-isometric embedding of a
		tripod of arbitrarily large size. More precisely,
		for each $n\ge 1$ there exists $\rho_n: T_n \to M$ such that for any $x,y \in T_n$
		$$
		C d_{T_n}(x,y) - \varepsilon \le d_M(\rho_n(x), \rho_n(y)) \le 
		d_{T_n}(x,y) +\varepsilon
		$$
		One can choose $C=1/3$.
	\end{lemma}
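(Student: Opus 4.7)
The plan is to deduce this lemma from Kapovich's theorem on large tripods, which is recalled and proved in Appendix \ref{sec:tripods}. Kapovich's result asserts that in a geodesic metric space $M$ which is not quasi-isometric to a point, a ray, or a line, one can find, for every $R > 0$ and every $\varepsilon > 0$, an $(R,\varepsilon)$-\emph{tripod}: three ``tips'' $a_1, a_2, a_3 \in M$ together with a ``center'' $o \in M$ with $d_M(o, a_i) \ge R$ and such that every geodesic between $a_i$ and $a_j$ (for $i \ne j$) passes within distance $\varepsilon$ of $o$. Granted this, the lemma reduces to a construction.

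Given such an $(R,\varepsilon)$-tripod with $R = n$, define $\rho_n : T_n \to M$ by sending the center $c$ of the abstract metric tripod $T_n$ to $o$ and parametrising each of the three arms of $T_n$ isometrically along a geodesic from $o$ to the corresponding tip $a_i$. The upper bound $d_M(\rho_n(x), \rho_n(y)) \le d_{T_n}(x,y) + \varepsilon$ is immediate: if $x, y$ lie on a single arm the map is an isometry there, and if $x,y$ lie on two different arms at distances $t$ and $s$ from $c$, then concatenating the two geodesic sub-arcs $\rho_n(x)\to o$ and $o\to\rho_n(y)$ gives a path in $M$ of total length $t + s = d_{T_n}(x,y)$.

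For the lower bound $d_M(\rho_n(x), \rho_n(y)) \ge \tfrac{1}{3} d_{T_n}(x,y) - \varepsilon$ the only substantive case is when $x$ and $y$ lie on different arms, say arms $i$ and $j$, at distances $t$ and $s$ from $c$. Here one exploits the defining property of the $(R,\varepsilon)$-tripod: if a geodesic from $\rho_n(x)$ to $\rho_n(y)$ were to significantly short-cut past $o$, one could concatenate it with the arm segments $a_i \to \rho_n(x)$ and $\rho_n(y) \to a_j$ to produce a path between $a_i$ and $a_j$ avoiding the $\varepsilon$-neighbourhood of $o$, contradicting the tripod property. Converting this into a quantitative estimate via the triangle inequality applied to $a_i, \rho_n(x), \rho_n(y), a_j$, together with the bound $d_M(a_i, a_j) \ge d_M(a_i, o) + d_M(o, a_j) - 2\varepsilon$ coming from the tripod property, yields a linear lower bound on $d_M(\rho_n(x),\rho_n(y))$. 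Tracking the three pairwise distances carefully gives the stated constant $C = 1/3$.

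The main obstacle is Kapovich's theorem itself, whose proof is deferred to Appendix \ref{sec:tripods} (and which requires a genuine argument, ruling out the three degenerate quasi-isometry types in turn). Once that input is granted, the construction and verification above are routine.
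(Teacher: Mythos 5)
There is a genuine gap, and it lies in the statement of the key input. You recall Kapovich's theorem as providing three tips $a_1,a_2,a_3$ and a center $o$ with $d_M(o,a_i)\ge R$ such that \emph{every} geodesic between $a_i$ and $a_j$, for all pairs $i\ne j$, passes within $\varepsilon$ of $o$. That is not what the theorem (Claim \ref{kapovich}, with the definition of an $(R,\varepsilon)$-tripod used in Appendix \ref{sec:tripods}) gives, and it is in fact false for general geodesic spaces: in the Euclidean plane take $Y=(-R,0)$, $Z=(R,0)$, $O=(0,0)$, $X=(0,R)$; this is an $(R,0)$-tripod in Kapovich's sense (the concatenation $YO\cup OZ$ is a geodesic and $XO$ realizes the distance from $X$ to it), yet the geodesic from $X$ to $Y$ stays at distance about $R/\sqrt2$ from $O$. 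More tellingly, if your stronger statement were available, your own estimate $d_M(a_i,a_j)\ge d_M(a_i,o)+d_M(o,a_j)-2\varepsilon$ for \emph{all} pairs would give the lower bound $d_M(\rho_n(x),\rho_n(y))\ge d_{T_n}(x,y)-2\varepsilon$, i.e.\ $C$ arbitrarily close to $1$ --- which is impossible (again the plane, or the circle example in the appendix, shows $1/3$ cannot be improved). So the misstatement both overshoots the input and would prove a false strengthening of the lemma; the near-additivity of distances is only guaranteed for the one pair $(Y,Z)$ joined by the $\varepsilon$-geodesic $\mu$, not for pairs involving the third tip $X$.

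The missing content is precisely how to handle a point $z$ on the leg $\gamma=XO$ against a point $t$ on $\mu$, which is the substance of Claim \ref{le:Bclaim1}: what one knows about $\gamma$ is not that geodesics from $X$ to the tips pass near $O$, but that $\gamma$ has length at most $\operatorname{dist}(X,\mu)+\varepsilon$, whence a point of $\gamma$ at parameter $b$ from $O$ satisfies $d_M(\rho(z),\mu)\ge b-\varepsilon$. The constant $1/3$ then comes from a case analysis (with $a=d_T(t,O)$, $b=d_T(z,O)$): if $a\ge 2b$ one compares distances to $O$ and gets $\ge a-b-\varepsilon\ge \tfrac13 d_T(z,t)-\varepsilon$, and if $a\le 2b$ one uses $d_M(\rho(z),\mu)\ge b-\varepsilon\ge \tfrac13 d_T(z,t)-\varepsilon$. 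Your construction of $\rho_n$ and the upper bound are fine, and your contradiction sketch (concatenating arcs to avoid the $\varepsilon$-ball around $o$) would anyway not be a legitimate use of your stated property, since the concatenated path is not a geodesic; but the real fix is to use the correct definition of the $(R,\varepsilon)$-tripod and supply the two-case estimate above.
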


	Now we prove Lemma  \ref{le:notrayline}.
	
	\begin{proof}
		The order breakpoint for $\mathbb{Z}$ or $\mathbb{N}$ is 2 (we can take the natural order).
		If a uniformly discrete metric space $M$ is quasi-isometric to  $\mathbb{Z}$ or $\mathbb{N}$, then $\mins(M) = 2$ because of Lemma \ref{le:snakesqi}.
		Any uniformly discrete space that is quasi-isometric to a point has $\mins = 2$.
		
		Now let $M$ be a geodesic metric space that is not quasi-isometric to a point, ray or line, and let $T$ be an order on $M$.
		 	Now observe that from Lemmas \ref{lem:uniform}, \ref{lem:containstripod} and \ref{lem:discretecircletripod} it follows that $(M,T)$ admits an $(\infty, \rm{Bounded})$-sequence of snakes on 3 points.
		Using Lemma \ref{lem:uniform} we can extend this for metric spaces that are not necessarily geodesic but are quasi-isometric to a geodesic space.
		Then $\mins(M)\ge 3$ follows from Lemma \ref{lem:snake}.		
	\end{proof}
	
	A graph with $\mins \leq 3$ does not need to be quasi-isometric to a tree.
	Indeed, consider a ray and for each $i$ glue a circle of length $i$ at the point at distance $i$ from  the base point.
	This is a particular case of an acyclic
	gluing constructions
	(in this case applied to parts that are circles and intervals),
	considered in Section $2$ (see  Lemma  \ref{le:ORacyclicunion}). 
	We can choose an order on each part such that  the elongations of snakes on $4$ points are uniformly bounded from above in each part.
	From Corollary \ref{cor:acyclicgirth} it follows that for this acyclic gluing we have $\mins = 3$.
	
	However, groups with the word metric on their vertices satisfying  $\mins \le 3$ are virtually free and, hence, quasi-isometric to trees. We start with the following observation.
	
	\begin{lemma} \label{le: treesmins} 
		Let $M$ be a uniformly discrete metric space which is  quasi-isometric to a tree. Then there exists an order $T$ such that
		$\OR_{M,T}(3) <3$.
	\end{lemma}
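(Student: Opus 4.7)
My plan is to transport a hierarchical order from a metric tree to $M$ via a discretization, and to exploit the fact that, although a general quasi-isometric pullback of an order need not preserve the sharp inequality $\OR(3)<3$, restriction of an order to a sub-net does. By hypothesis $M$ is quasi-isometric to some metric tree $T_0$. Using Remark~\ref{re:separatednet}, I fix an $(\varepsilon,\varepsilon)$-net $N\subset T_0$; it is uniformly discrete and quasi-isometric to $T_0$, hence to $M$. On $T_0$ I choose a hierarchical order $T_h$ produced by Corollary~\ref{re:anytree}, so that $\OR_{T_0,T_h}(k)\le 2$ for every $k\ge 1$. In particular $\OR_{T_0,T_h}(3)\le 2<3$, which by Lemma~\ref{lem:snake} produces a uniform upper bound $C_0$ on the elongation of every snake on four points of $(T_0,T_h)$.

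I then restrict $T_h$ to $N$. The key observation is that any snake $(x_1,x_2,x_3,x_4)$ in $(N,T_h|_N)$ is automatically also a snake in $(T_0,T_h)$ with \emph{exactly} the same diameter, width, and elongation, because both the metric and the order on $N$ are inherited from $T_0$. Consequently, four-point snakes in $(N,T_h|_N)$ have elongation at most $C_0$, and the reverse direction of Lemma~\ref{lem:snake} yields $\OR_{N,T_h|_N}(3)<3$, giving $\mins(N)\le 3$.

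Since $M$ and $N$ are both uniformly discrete and quasi-isometric, Lemma~\ref{le:snakesqi}(2) gives $\mins(M)=\mins(N)\le 3$. Choosing an order $T$ on $M$ that realizes this minimum yields $\OR_{M,T}(s)<s$ for some $s\in\{2,3\}$, and by the observation recorded immediately after Lemma~\ref{lem:snake} (that $\OR(k)=k$ forces $\OR(a)=a$ for all $a<k$), the contrapositive gives $\OR_{M,T}(3)<3$ in either case. The only delicate point in the argument is the step from $T_0$ to $N$: one cannot simply pull $T_h$ back along the quasi-isometry $M\to T_0$, because such a pullback can inflate snake widths by an additive constant and thereby destroy the strict inequality; routing through a net of $T_0$ preserves distances exactly and avoids this obstacle.
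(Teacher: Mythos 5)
Your argument is correct and follows essentially the paper's route: the paper's proof is exactly Corollary \ref{re:anytree} (hierarchical orders give $\OR \le 2$ on any metric tree) combined with the quasi-isometry invariance of the order breakpoint from Lemma \ref{le:snakesqi}. Your detour through an $(\varepsilon,\varepsilon)$-net of the tree is a harmless extra precaution making claim (2) of Lemma \ref{le:snakesqi} literally applicable (the tree itself need not be uniformly discrete), though your closing worry about pullbacks is already resolved by claim (1) of that lemma: uniform discreteness of $M$ bounds the widths of $4$-point snakes from below, so the additive quasi-isometry constant cannot destroy the strict inequality.
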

	
	\begin{proof}
		It follows immediately from Lemma \ref{lem:hierarchicaltrees} and the second claim of Lemma \ref{le:snakesqi}.
	\end{proof}
	
	Let $G$ be a group with a finite generating set
	$S \subset G$, and let $\Gamma(G,S)$ be the Cayley graph of $G$ with respect to $S$. 
	The {\it number of ends} of a group is the supremum, taken over all finite
	sets $V$, of the  number of infinite connected components of $\Gamma \setminus V$.
	It is not difficult to see that the number of ends does not depend on the choice of the generating set. Moreover, it is a quasi-isometric invariant
	of groups. The number of ends of an infinite  group can be equal to $1$,
	$2$ or $\infty$. The notion of ends and their numbers can be more generally defined for metric spaces, or even more generally for topological spaces,
	but this notion has particular importance in group theory due to Stallings theorem that we will recall in the proof of
	Corollary \ref{cor:oneendedsubgroup}.
	For generalities about the number of ends see for example
	\cite{drutukapovichbook}.

	\begin{lemma}[One-ended groups have snakes on $4$ points] \label{le:oneended}
		Let $G$ be a one-ended finitely generated group.
		Then for any order $T$ and any integer $N$ in $(G, T)$ there exists a snake on $4$ points  of width $1$ and of diameter at least $N$. 
		In particular, $\mins(G) \ge 4$.
	\end{lemma}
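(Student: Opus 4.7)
We argue by contradiction. Assume there is an order $T$ on $G$ and an integer $N$ such that $(G,T)$ admits no snake on $4$ points of width $1$ and diameter $\ge N$. Equivalently, no two edges $e,f$ of the Cayley graph $\Gamma$ of $G$ at mutual distance $\ge N$ have endpoints that interleave in $T$. The plan is to reproduce at large scale in $\Gamma$ the argument used for the domino graph in Lemma~\ref{lem:domino}, deriving a contradiction from one-endedness.

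The main technical step is the construction, for a parameter $L$ chosen large compared to $N$, of a \emph{discrete theta subgraph} $\Theta\subset\Gamma$ of size $L$: two vertices $A,B$ with $d_\Gamma(A,B)\ge L$ joined by three internally vertex-disjoint paths $P_1,P_2,P_3$ of length $O(L)$ each, with $\Theta$ quasi-isometrically embedded in $\Gamma$ with bounded distortion. Such $\Theta$ exists because one-endedness of $G$ guarantees a unique infinite component in the complement of every finite subset of $\Gamma$, allowing iterative rerouting of (quasi-)geodesic segments past previously used vertices to produce three pairwise disjoint long (quasi-)geodesic paths between suitable $A,B$. The initial segments of $P_1,P_2,P_3$ of length $\lfloor L/3\rfloor$ at $A$ (resp.\ at $B$) form tripods $\mathcal{T}_A,\mathcal{T}_B$, isometric in $\Gamma$ up to bounded error. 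By the discrete tripod lemma (Lemma~\ref{lem:discretecircletripod}(2)), the restriction of $T$ to $\mathcal{T}_A$ admits a $3$-snake $(x_1,x_2,x_3)$ with $d(x_1,x_3)\le 1$ and $d(x_2,x_i)\ge L/3-1$ for $i\in\{1,3\}$; similarly a $3$-snake $(y_1,y_2,y_3)$ in $\mathcal{T}_B$.

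We attempt to extend $(x_1,x_2,x_3)$ to a $4$-snake of width $1$. If some $\Gamma$-neighbor $w$ of $x_2$ satisfies $w<_T x_1$ or $w>_T x_3$, then inserting $w$ produces a $4$-snake of width $1$ and diameter $\ge L/3-2\ge N$, as required. Otherwise every $\Gamma$-neighbor of $x_2$ lies strictly between $x_1$ and $x_3$ in $T$. Choose a path $\Pi$ inside $\Theta$ going from $x_2$ along the segment of $\mathcal{T}_A$ containing $x_2$ toward $B$, and then within $\mathcal{T}_B$ to reach $y_1$ and $y_3$, avoiding $\{x_1,x_3\}$ throughout (possible because $\{x_1,x_3\}$ lies on at most one of the three segments of $\mathcal{T}_A$). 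For $L$ sufficiently large compared to $N$, the quasi-isometric embedding of $\Theta$ guarantees that every edge on $\Pi$ is at $\Gamma$-distance $\ge N$ from $\{x_1,x_3\}$, so the non-interleaving hypothesis applies. From the base case (that $x_2$ is strictly between $x_1$ and $x_3$ in $T$), an induction along $\Pi$ forces every vertex of $\Pi$, in particular $y_1$ and $y_3$, to lie strictly between $x_1$ and $x_3$ in $T$. The symmetric argument centered at $y_2\in\mathcal{T}_B$ either produces a $4$-snake, or forces $x_1,x_3$ to lie strictly between $y_1$ and $y_3$ in $T$. Combining the two conclusions gives $x_1<_T y_1<_T x_1$, a contradiction; hence some extension succeeds.

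The main obstacle is the construction of the quasi-isometrically embedded theta subgraph of size $L$, where one-endedness of $G$ enters essentially. Particular care is needed to keep the three paths $P_i$ pairwise disjoint and (quasi-)geodesic, so that the uniform distance bound between $\Pi$ and $\{x_1,x_3\}$ holds and the non-interleaving hypothesis can be invoked consistently along the induction.
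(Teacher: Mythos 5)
There is a genuine gap, and it sits exactly where you yourself flag the ``main obstacle'': the existence, for every $L$, of a theta subgraph $\Theta\subset\Gamma$ of size $L$ that is quasi-isometrically embedded with distortion constants independent of $L$. This is not merely unproved (``iterative rerouting'' is not an argument: one-endedness controls infinite components of complements of finite sets, but by itself gives neither Menger-type $3$-connectivity between arbitrary far-apart $A,B$ nor the $O(L)$ length bound); it is in fact false for an important class of one-ended groups that the lemma must cover. If $G$ is a one-ended hyperbolic group (say a closed surface group) and $\Theta$ were $(C,C)$-quasi-isometrically embedded, then each of the three paths $P_i$ would be a uniform quasi-geodesic from $A$ to $B$ in $\Gamma$ (its $\Theta$-distances are comparable to arc length at scales up to $L$, and local-to-global applies), so by the Morse lemma $P_1$ and $P_2$ would lie at Hausdorff distance bounded in terms of $C$ and $\delta$ only; the arc-midpoint of $P_1$ would then be at bounded $\Gamma$-distance from some point of $P_2$, while its $\Theta$-distance to that point is of order $L$, since inside $\Theta$ any route between the interiors of $P_1$ and $P_2$ passes through $A$ or $B$. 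This contradicts bounded distortion for large $L$. Consequently the later induction along $\Pi$, which needs every vertex of $\Pi$ to stay at $\Gamma$-distance at least $N$ from $\{x_1,x_3\}$ and gets this from the uniform embedding of $\Theta$, collapses. (A smaller point: to get snakes of width exactly $1$ you need honest subgraph tripods whose arms are edge-paths, not just abstract tripods mapped in with distortion.)

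For comparison, the paper's proof avoids building any embedded configuration at all. Assuming no width-$1$ snakes of diameter $\ge N_0$, it uses superlinear growth (groups of linear growth are two-ended, by Justin's theorem) to find, by pigeonhole along a short path through a ball $B(x,R)$ enumerated by $T$, two adjacent vertices $z<_T w$ with more than $N_2$ points of the ball $T$-between them; one such point lies in the unique infinite component of the complement of $B(z,2N_0)$, and the no-snake hypothesis then forces that entire infinite component, in particular a far-away ball $B(y,R)$, to be $T$-between $z$ and $w$; running the same argument from $y$ sandwiches $B(x,R)$ between two points of $B(y,R)$, which is absurd for disjoint balls. If you wish to rescue a domino-style argument like yours, note that uniformly embedded \emph{tripods} do exist in any such group (Lemma \ref{lem:containstripod}, after Kapovich), unlike theta graphs; one would then have to use one-endedness together with homogeneity separately to route the propagation path around a ball of radius comparable to $N$ about the close pair $x_1,x_3$, rather than relying on a globally undistorted $\Theta$. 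As written, however, the proposal's key step fails.
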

	
	\begin{proof}
		Assume the contrary: for some order $T$ and integer $N_0$ there are no snakes on $4$ points of width $1$ and diameter at least $N_0$.
		Take a point $x \in G$ and consider a ball $B(x,2N_0)$ of radius $2N_0$, centered at $x$. 
		Since $G$ is one-ended,
		the complement of this ball has (in the Cayley graph $\Gamma(G,S)$ for some generating set $S$) one infinite connected component, which we denote by $Y$, and finitely many finite connected components.
		Take $N_1$ such that all these finite connected components belong to the ball $B(x,N_1)$, and let $N_2$ be the number of points in a ball of radius $N_1$.
		Note that for any two points $a, b\in G$ if 
		$d_G(a,b) > N_1$ then $b$ lies in the unique infinite connected component of $\Gamma(G,S) \setminus B(a, 2 N_0)$ (this unique connected component is a translation of $Y$).
		
		We recall that the {\it growth function} of a group $G$ with respect to a generating set $S$ is the cardinaly $\#B(e,n)$ of the ball of radius $n$ in the word metric of $(G,S)$.		
		Observe that all groups of linear growth are virtually $\Z$, and thus have
		two ends. We know therefore that  the  growth function of $G$
		satisfies  $\#B(x,n)/n  \to \infty$,
		as follows from an elementary case of the Polynomial Growth theorem, due to Justin (see  \cite{justin71}, see also \cite{MannHowgroupsgrow}).
		
		Now we fix a sufficiently large $R$. It is sufficient to assume that
		$\#B(x,R)/R > 2 N_2 +2$, $R>N_2$.
		Consider any two points $x, y$ of $G$ at distance  larger than $3R$.
		We define $M=\#B(x,R)$, observe that $M \geq R (2N_2+2)$. 
		Enumerate the points of $B(x,R)$ with respect to the order $T$:  $A_1,A_2,\dots, A_M$;
		$A_1 <_T A_2 <_T A_3 \dots <_T A_M$.
		It is clear that  $A_1$ and $A_M$ can be connected with a path of length not
		greater than $2R$.
		There are two neighbouring points  along this path such that their indices differ by at least  
		$
		\frac{M-1}{2R}> N_2$.
		We denote these points by $z$ and $w$, assuming
		that $z<_T w$.
		Observe that $G$ (in fact the ball
		$B(x,R)$) contains  at least $N_2$ elements $t$ such that $z <_T t <_T w$. 
		The infinite connected component of $\Gamma(G,S) \setminus B(z,2N_0)$ contains at least one such $t$.
		
		Observe that in this case  all elements in this connected  component are between $z$ and $w$ with respect to  $T$. 
		Indeed, otherwise in $\Gamma(G,S) \setminus B(z,2N_0)$ there exist two vertices at distance $1$ such that one of them is $T$-between $z$ and $w$ and the second is either $<_T z$ or  $>_T w$.
		This would imply that  in $G$ there is a snake on $4$ points with width $1$ and diameter $\geq N_0$, this is in  a contradiction with our assumption.
		In particular, for any point $t$ in $B(y,R)$ we have 
		$z <_T t <_T w$.
		
		But in the same way we can prove that in $B(y,R)$ there are two points $z',w'$ at distance $1$ such that for any $t \in B(x,R)$ it holds that $z' <_T t <_T w'$. 
		So we see that all points of $B(y,R)$ are between some two points
		of $B(x,R)$, and all points of $B(x,R)$ are between some two points
		of $B(y,R)$. Since these two balls do no intersect, we obtain a contradiction.
	\end{proof}

	\begin{corol}\label{cor:oneendedsubgroup}
		If a finitely generated group $G$ contains a one-ended (finitely generated) subgroup, then $\mins(G) \ge 4$. In particular, this inequality 
		holds for any finitely presented not virtually free group.
	\end{corol}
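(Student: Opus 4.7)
The plan is to reduce the first statement to Lemma~\ref{le:oneended} via Lemma~\ref{lem:uniform}, and then to obtain the second statement from the first by invoking Dunwoody's accessibility theorem together with the classical fact that fundamental groups of finite graphs of finite groups are virtually free.

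For the first claim, let $H$ be a one-ended finitely generated subgroup of $G$. Fix word metrics $d_H$ and $d_G$; one may enlarge the generating set of $G$ so that the generators of $H$ are included, which guarantees $d_G(h_1, h_2) \le d_H(h_1, h_2)$ for $h_1, h_2 \in H$. The key observation will be that the inclusion $\iota: H \hookrightarrow G$ is a uniform imbedding in the sense of Definition~\ref{de:ue}: the upper bound on $d_G$ is just noted, and for the lower bound, if $d_G(h_1, h_2) \le R$ then $h_1^{-1} h_2$ lies in the $R$-ball of $G$, which is finite, so $d_H(h_1, h_2)$ is bounded by a function of $R$. Now fix any order $T_G$ on $G$ and let $T_H$ be its restriction to $H$, which is a pullback of $T_G$ along $\iota$. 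Lemma~\ref{le:oneended} applied to $(H, T_H)$ supplies snakes on $4$ points of width $1$ and diameter tending to infinity. Their $\iota$-images form snakes on $4$ points in $(G, T_G)$ of uniformly bounded width (at most $\rho_2(1)$) and diameter tending to infinity (since $\rho_1 \to \infty$). Lemma~\ref{lem:uniform} then yields $\OR_{G, T_G}(3) = 3$, so $\mins(G, T_G) \ge 4$. Since $T_G$ was arbitrary, $\mins(G) \ge 4$.

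For the second claim, assume $G$ is finitely presented and not virtually free. By Dunwoody's accessibility theorem, $G$ is the fundamental group of a finite graph of groups with finite edge groups whose vertex groups are each either finite or one-ended. If every vertex group were finite, $G$ would be the fundamental group of a finite graph of finite groups and hence virtually free, contradicting the assumption. Therefore some vertex group $H$ is infinite and one-ended; it is a finitely generated subgroup of $G$, and the first part of the corollary applies to give $\mins(G) \ge 4$.

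There is no serious obstacle here: the snake structure inside $H$ is handed to us by Lemma~\ref{le:oneended}, and the mechanism for transporting such snakes through a uniform imbedding is already packaged in Lemma~\ref{lem:uniform}. The only point requiring a moment of care is the verification that $\iota: H \hookrightarrow G$ is a uniform imbedding, which boils down to finiteness of balls in the finitely generated group $G$.
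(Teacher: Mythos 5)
Your proposal is correct and follows essentially the same route as the paper: apply Lemma \ref{le:oneended} to the one-ended subgroup, transport the width-$1$ snakes through the inclusion (a uniform imbedding) via Lemma \ref{lem:uniform}, and for the second claim use Dunwoody's accessibility together with the fact that fundamental groups of finite graphs of finite groups are virtually free. Your explicit verification that the inclusion $H\hookrightarrow G$ is a uniform imbedding is a detail the paper merely asserts, but the argument is the same.
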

	\begin{proof}
		Let $H$ be a finitely generated one-ended subgroup of $G$ and $T$ an order on $G$.
		By Lemma \ref{le:oneended} we know that
		$H$ admits an $(\infty, \rm{Bounded})$-sequence of snakes on $4$ points.
		Observe that the embedding map of any finitely generated subgroup into an ambient group is uniform.
		We can therefore apply Lemma \ref{lem:uniform} to conclude that for any order $T$ the space $G$ admits a sequence of snakes of 
		bounded width on $4$ points, and therefore that  $\mins(G) \ge 4$.
		
		Now we explain the second claim of the corollary about finitely presented
		groups.
		Stallings theorem (see e.g.
		\cite{drutukapovichbook})
		shows that if a finitely generated group has at least  two ends, then it is an amalgamated free product over a finite group or an HNN extension over a finite subgroup.
		For any group which we obtain in this way if it has at least two ends, one can again write it as an amalgamated free product or an HNN extension over a finite subgroup.
		We recall that a group is said to be {\it accessible} if this process terminates. That is, if the group is accessible, then it is the fundamental group of a finite graph of
		group such that each of the vertex groups is either finite or has one end, and each of the edge groups is finite, see \cite{dunwoody}, \cite{serreTrees}.
		If all vertex groups
		are finite, the group is virtually free (see Proposition 11,
		Section 2.6 \cite{serreTrees}). 
		Since the vertex groups are subgroups of the fundamental group of a graph of groups, it is clear that if a group is accessible and not virtually free, then
		it contains a one-ended subgroup. By a result of Dunwoody any finitely presented group is accessible
		(see \cite{dunwoody}, see also \cite{drutukapovichbook}). Therefore, any finitely presented not virtually free group admits a one-ended subgroup.
	\end{proof}

	Any  infinitely presented group
	admits isometric embeddings of arbitrarily large
	cycles. 
	See Theorem A in \cite{Nima}, which states that shortcut groups
	(they are by definitions those that act properly and cocompactly on graphs that do not admit isometric
	embeddings of large cycles)  are finitely presented.
	In particular, this result shows that Cayley graphs of not finitely presented groups have long cycles. 
	This is formultated in the first part of the Lemma below, for the  convenience of the reader we include the proof.
	
	\begin{lemma}[Cycles and Trimino graphs in infinitely presented groups]\label{le:infprcycle}
		Let $G$ be a group with a finite generating set $S$. Assume that  $G$ is not finitely presented.
		
		\begin{enumerate}
			\item Then for any $N>0$ there is an isometric embedding of a cycle with length more than $N$ into the Cayley graph $\Gamma(G,H)$.
			\item For any $N_0$ and $N_1>0$ there exist  three isometric embeddings of cycles $\omega_1$, $\omega_2$, $\omega_3$ in such a way that $\omega_1$ and $\omega_2$ have a common path
			$AB$, $\omega_2$ and $\omega_3$ have a common path
			$CD$, $AB$ and $CD$ have lengths $\ge N_0$; the ratio of the length of $BC$ or $AD$ to that of $\omega_1$ or $\omega_3$ is $\ge N_1$.
			(see Figure \ref{pic:figureeight}).
		\end{enumerate}
		
	\end{lemma}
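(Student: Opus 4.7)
The plan is to prove part (1) by a standard shortcut argument and then bootstrap it in part (2) by assembling long isometrically embedded cycles into a trimino configuration.

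\emph{Part (1).} I argue by contrapositive. Suppose every isometrically embedded cycle in $\Gamma(G,S)$ has length at most some $N$. Let $R_N$ be the finite set of words of length at most $N$ representing the identity in $G$; I claim $\langle S\mid R_N\rangle$ presents $G$, contradicting the hypothesis. By induction on $|w|$, every null-homotopic word $w$ lies in the normal closure of $R_N$. The base case $|w|\le N$ is immediate. For $|w|>N$, the closed loop labeled $w$ in $\Gamma(G,S)$ is not isometrically embedded, so two of its vertices $P,Q$ satisfy $d_\Gamma(P,Q)<d_{\mathrm{loop}}(P,Q)$. A geodesic from $P$ to $Q$ splits the loop into two strictly shorter closed loops; their labels are conjugate (in the free group) to null-homotopic words $w_1,w_2$ whose product is conjugate to $w$, and the induction hypothesis handles both.

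\emph{Part (2).} Fix $N_0,N_1,N_2$. Apply part (1) to obtain an isometrically embedded cycle $\omega_2$ of length $L$ chosen much larger than $N_0 N_1 N_2$. Pick four vertices $A,B,C,D$ on $\omega_2$ in cyclic order so that the arcs $AB,CD$ have length exactly $N_0$ and the complementary arcs $BC,AD$ have length at least $N_2\cdot N_1 N_0$. To build $\omega_1$ sharing the arc $AB$ with $\omega_2$: apply part (1) again to produce an auxiliary isometrically embedded cycle $\sigma$ whose label, at some cyclic starting point, has the same first $N_0$ letters as the label of $AB$. Translating $\sigma$ so that its starting vertex is $A$ yields a cycle $\omega_1$ of length about $N_1 N_0$, isometrically embedded, and coinciding with $\omega_2$ along $AB$. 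The cycle $\omega_3$ sharing $CD$ is produced symmetrically. The large separation $|BC|,|AD|\ge N_2\cdot N_1 N_0$ ensures the three cycles do not meet outside the prescribed shared arcs.

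\emph{Main obstacle.} The hard step is producing the auxiliary isometrically embedded cycles whose labels share prescribed sub-words with the label of $\omega_2$ along $AB$ and $CD$: part (1) by itself yields arbitrarily long isometrically embedded cycles but gives no a priori control over their labels. I expect this to be handled by reversing the order of choice, namely first selecting $\omega_1,\omega_3$ from part (1) with some labels $r_1,r_3$ of length about $N_1 N_0$, and then constructing $\omega_2$ as an isometrically embedded cycle whose label contains $r_1$ and $r_3$ as sub-segments separated by long geodesic arcs, invoking infinite presentability to supply a sufficiently flexible relator for $\omega_2$. The coordinated choice of these three cycles, and the verification that the resulting $\omega_2$ remains isometrically embedded, is the technical heart of the lemma.
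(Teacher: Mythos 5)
Your part (1) is fine: the contrapositive/induction formulation is just a repackaging of the paper's direct argument (take a relator $u$ not implied by shorter relators; if the loop labelled $u$ had a shortcut, $u$ would follow from two shorter relators), so there is no issue there.

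Part (2), however, contains a genuine gap, and it is exactly the one you flag yourself. Both versions of your plan require an isometrically embedded cycle whose label contains a \emph{prescribed} subword: either an auxiliary cycle $\sigma$ whose label begins with the label of the arc $AB$ of $\omega_2$, or, in the reversed order, a long relator for $\omega_2$ containing the previously chosen labels $r_1,r_3$ as subsegments. Part (1) gives no control whatsoever over the labels of the isometric cycles it produces, and ``infinite presentability supplies a sufficiently flexible relator'' is not a statement one can invoke: there is no reason an infinitely presented group has relators (let alone relators not implied by shorter ones) containing prescribed subwords in prescribed positions. So the assembly step is not proved, and the proposal as written does not close. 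The paper avoids prescribing labels altogether by a pigeonhole argument: for each sufficiently long isometric relator cycle $u_i$ (length $>2N_1N_0$) it fixes the decomposition $u_i=x_ia_iy_ib_i$ with $l_S(x_i)=l_S(y_i)=N_0$ and $|l_S(a_i)-l_S(b_i)|\le 1$, i.e.\ with $x_i$ and $y_i$ nearly antipodal on the cycle. Since there are only $(\#S)^{2N_0}$ possible pairs $(x_i,y_i)$ and infinitely many such cycles of unbounded length, one finds indices $i,j$ with $x_i=x_j$, $y_i=y_j$ and $l_S(u_j)>4N_2\,l_S(u_i)$. Then $\omega_2$ is the cycle labelled $u_j=x_ia_jy_ib_j$, while $\omega_1$ and $\omega_3$ are the cycles labelled $u_i$ and a cyclic shift of $u_i$, glued to $\omega_2$ along the paths labelled $x_i$ and $y_i$; all three are isometric by part (1), the coincidence of labels forces the shared arcs $AB$ and $CD$, and the antipodal placement of $x_i,y_i$ inside $u_j$ is what guarantees that both $BC$ and $AD$ have length comparable to $l_S(u_j)/2$, hence at least $N_2$ times the lengths of the small cycles. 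This coordinated pigeonhole choice (same pair of antipodal subwords, very different cycle lengths) is the missing idea; without it, or some substitute giving label control, your construction of $\omega_1$ and $\omega_3$ cannot be carried out.
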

	
	\begin{figure}[!htb] 
		\centering
		\includegraphics[scale=.75]{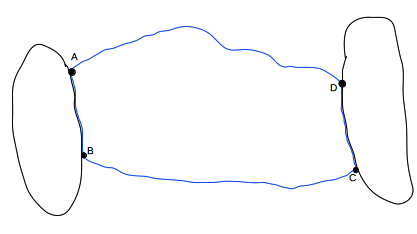}
		\caption{Both black closed paths ($\omega_1$ and $\omega_3$) and the blue closed path (that is $\omega_2$) are isometrically embedded cycles. The length of both grey-and-blue intervals are much smaller than the lengths of both black intervals, which are in its turn are much smaller than the length of both blue intervals.}
		\label{pic:figureeight}
	\end{figure}
	
	\begin{proof}
		$(1)$ Take in $G$ a  cyclically irreducible relation $u = s_1s_2\dots s_n$, $s_i\in S$ such that $u$ is not a consequence of shorter relations of $G$ (since $G$ is not finitely presented, there are infinitely many such $u$).
		Consider in $\Gamma(G,S)$ a path starting at an arbitrary vertex and having labelling $u$.
		This is a cycle $c_u$ with length $n$, assume it is not an isometric embedding.
		Then some two vertices $v_i$ and $v_j$ are connected with a path $p$ such that $p$ has fewer edges than both parts of $c_u$ between $v_i$ and $v_j$. 
		If we keep one of these parts and replace another one by $p$, we obtain two relations in $G$ that are shorter than $u$ and imply $u$. This contradiction
		implies the first claim.
		
		$(2)$ To prove the second part of the lemma, it is sufficient to take an isometric embedding of a long cycle, choose another, much longer isometrically embedded cycle that has a long common word with  the first one, and finally take a third, long but much shorter than the second one isometrically embedded cycle which has a long almost antipodal common subword with the second one.
		
		More precisely, we argue as follows. For a word $u$ in the alphabet
		$S$ we denote by $l_S(u)$ its length.
		From $(1)$ we know that there exists an infinite sequence of words $(u_i)$ in
		the alphabet $S$ such that $\lim_{i\to \infty} l_{S}(u_i) = \infty$ and for any $i$ and any $g\in G$, the path in $\Gamma(G,S)$, starting at $g$ and
		labelled $u_i$
		is an isometric image of a cycle of length $l_{S} (u_i)$.
		
		Any word $u_i$  of length greater than $2N_1N_0$ can be decomposed as follows:
		\[
		u_i = x_i a_i y_i b_i,
		\]
		where $l_S(x_i) = l_S(y_i) = N_0$ and $|l_{S}(a_i) - l_{S}(b_i)| \leq 1$. 
		In other words, we chose this decomposition in such a way that two segments labeled by $x_i$ and $y_i$ are almost opposite in the cycle labeled by $u_i$.
		
		\noindent Since the number of such pairs $(x_i, y_i)$ is bounded by $(\#S)^{2N_0}$, there exist $i$ and $j$ such that $x_i = x_j$, $y_i = y_j$,  and $l_S(u_j) > 4N_1l_S(u_i)$.
		
		\noindent In $\Gamma(G,S)$ there are vertices $A, B, C, D$ and six paths (lower indices denote starting and ending points) $p_{AB}$, $p_{BA}$, $p_{BC}$, $p_{CD}$, $p_{DC}$, $p_{DA}$ with label words 
		$x_i$, $a_iy_ib_i$, $a_j$, $y_i$, $b_ix_ia_i$, $b_j$ correspondingly (see Figure \ref{pic:figureeight}).
		
		The cycles $p_{AB}p_{BA}$, $p_{AB}p_{BC}p_{CD}p_{DA}$ and $p_{CD}p_{DC}$ have labels $u_i$, $u_j$ and a cyclic shift of  $u_i$ correspondingly, they are isometrically embedded,
		$p_{BC}$ and $p_{DA}$ are geodesics, 
		and all the required inequalities hold.
	\end{proof}
	
	\begin{thm}\label{thm:girth4}
		Let $G$ be a finitely generated group.
		Then $\mins(G) \leq 3$ if and only if $G$ is virtually free.
	\end{thm}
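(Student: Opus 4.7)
The plan is to prove the two directions separately, splitting the harder ``only if'' direction according to whether $G$ is finitely presented.

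For the ``if'' direction I would use that any finitely generated virtually free group is quasi-isometric to a bounded valence tree (via Stallings/Bass--Serre), hence to a metric tree. Since $G$ is uniformly discrete in its word metric, Lemma \ref{le: treesmins} supplies an order $T$ on $G$ with $\OR_{G,T}(3) < 3$, which is exactly $\mins(G) \le 3$.

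For the converse I would argue the contrapositive: if $G$ is not virtually free, then $\mins(G) \ge 4$. When $G$ is finitely presented this is immediate from Corollary \ref{cor:oneendedsubgroup}, which combines Stallings' theorem with Dunwoody accessibility to produce a one-ended subgroup and thereby forces $\mins(G) \ge 4$. The remaining, and main, case is when $G$ is infinitely presented, where I would apply Lemma \ref{le:infprcycle} with parameters $N_0, N_1, N_2$ chosen very large to obtain three isometrically embedded cycles $\omega_1, \omega_2, \omega_3$ in the Cayley graph $\Gamma(G,S)$ with the ``trimino'' overlap/length structure.

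For this last case, fix any order $T$ on $G$ and imitate the proof of Lemma \ref{lem:domino}. By part~(1) of Lemma \ref{lem:discretecircletripod}, inside the isometrically embedded cycle $\omega_1$ there is a $3$-point snake $x_1 <_T x_2 <_T x_3$ of width at most $1$ and of diameter roughly half the length of $\omega_1$; symmetrically, there is such a snake $y_1 <_T y_2 <_T y_3$ inside $\omega_3$. The long arcs $BC$ and $DA$ of $\omega_2$ provide a discretised path in $\Gamma(G,S)$ from $x_2$ to any vertex of $\omega_3$ whose vertices, by the ratio bounds in the trimino, stay uniformly far from both $x_1$ and $x_3$. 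Now suppose for contradiction that $(G,T)$ admits no $4$-point snake of width at most a controlled $\delta$ and diameter at least a controlled $D$. Then along the above path, the first vertex that fails to lie $T$-between $x_1$ and $x_3$, combined with its predecessor on the path and with the pair $x_1, x_3$, would furnish exactly such a forbidden $4$-point snake. Consequently every vertex of $\omega_3$ must lie $T$-between $x_1$ and $x_3$, and symmetrically every vertex of $\omega_1$ must lie $T$-between $y_1$ and $y_3$. Choosing $N_1, N_2$ large enough that these two simultaneous containments become geometrically impossible then yields the contradiction.

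The hard part will be the parameter bookkeeping in this infinitely presented case: matching the widths and diameters of the $3$-point snakes on $\omega_1, \omega_3$ to the width/diameter thresholds of the hypothetical $4$-point snake, while ensuring that $N_2$ is chosen so large that neither cycle can be ``order-nested'' inside a bounded $T$-interval of the other. The ``if'' direction and the finitely presented case reduce directly to lemmas already established in the paper; it is only the trimino analysis that requires genuinely new work.
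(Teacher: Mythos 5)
Your proposal is correct and follows essentially the same route as the paper's proof: quasi-isometry invariance (via trees) for the ``if'' direction, Corollary \ref{cor:oneendedsubgroup} for the finitely presented case, and for infinitely presented groups the trimino of Lemma \ref{le:infprcycle} combined with the discrete circle snake of Lemma \ref{lem:discretecircletripod}, the ``first vertex leaving the $T$-interval gives a forbidden $4$-point snake'' argument, and the mutual-nesting contradiction as in Lemma \ref{lem:domino}. The one point your sketch glosses over is that the snake points $x_1, x_3$ may sit anywhere on $\omega_1$ (possibly at a junction vertex), so producing a path from $x_2$ to $\omega_3$ avoiding their neighbourhood is not a direct consequence of the ratio bounds: the paper shows the ball around $x_1$ cannot meet both connecting arcs $p_{BC}$ and $p_{DA}$ (using that $\omega_2$ is isometrically embedded and $l(p_{AB})$ is much larger than the snake width scale) and then reroutes inside $\omega_1$ around the arc cut out by that ball --- exactly the bookkeeping you anticipate.
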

	
	\begin{proof}
		We know that  free groups have  $\mins(G,T) \leq 3$,
		see Remark \ref{rem:freegroup}. 
		In fact, $\mins = 3$ if
		the number of generators is at least $2$ (see Lemma \ref{le:notrayline}), and is $2$ otherwise (if $G$ is $\Z$).
		Since $\mins$ is a quasi-isometric invariant of groups (see Lemma \ref{le:snakesqi}), we can conclude that any virtually free group $G$ satisfies $\mins(G) \leq 3$.
		
		Now we have to prove the non-trivial claim of the theorem. That is, to prove that if $G$ is not virtually free and $S$  is a finite generating set, then  for any order $T$ it holds that $\mins({G,T}) \ge 4$.
		
		\noindent The second claim of Corollary \ref{cor:oneendedsubgroup} says that if $G$ is finitely presented but not virtually free, then $\mins({G,T}) \ge 4$ for any order $T$. 
		
		Therefore, it is enough to assume now  that $G$ is not finitely presented. In this case we consider a mapping of the ``trimino figure'', guaranteed by Lemma \ref{le:infprcycle}, and search for an appropriate snake in the image of this figure. (If we would have a quasi-isometric embedding of a domino figure, 
		with  length of its edges large enough, in our group
		then we could use directly a discrete version of the domino Lemma \ref{lem:domino}. Since we do not have this information, it is more convenient for us to work with the trimino figure).
		
		Suppose that the claim we want to prove is not satisfied: $G$ is a group which is not finitely presented, $S$ is a finite generating set, $T$ is an order on $G$ and
		there are no snakes on $4$ points
		with large elongation in $(G,S)$, with respect to the order $T$. In particular, this means
		that for  some $K$ there are no snakes on $4$ points  in $(G,T)$ with width $1$ and of diameter greater than or equal to $K$.
		
		Apply the  second part of Lemma \ref{le:infprcycle} and consider  
		the image of the  ``trimino figure'' satisfying the claim of this lemma 
		for $N_0 = 10K$, $N_1 = 10$.
		
		The cycles $p_{AB}p_{BA}$, $p_{AB}p_{BC}p_{CD}p_{DA}$ and $p_{CD}p_{DC}$,
		constructed in this lemma, are isometrically embedded, 
		$l_{G,S}(p_{AB}) \ge 10K$, $l_{G,S}(p_{CD}) \ge 10K$,  $\min(l_{G,S}(p_{BC}),l_{G,S}(p_{DA})) > 10 \max(l_{G,S}(p_{BA})+ l_{G,S}(p_{AB}), l_{G,S}(p_{CD})+ l_{G,S}(p_{DC}))$.
		
		 Since $p_{AB}$ and $p_{CD}$ are geodesics, 
		the cycles $p_{AB}p_{BA}$ and $p_{CD}p_{DC}$ have length $\ge 20K$.		
		From the first claim of Lemma \ref{lem:discretecircletripod}
		we know that 
		the cycle $p_{AB}p_{BA}$ contains points $v_1, v_2, v_3$ 
		such that $v_1 <_T v_2 <_T v_3$, $d(v_1, v_3) = 1$, $d(v_1, v_2) > 9 K$.
		
		We claim that in $\Gamma(G, S)$ there is a (not necessarily directed) continuous path $p$  from $v_2$ to the vertex $C$ such that it does not intersect with the ball $B(v_1, 2K)$.
		
		\noindent First observe that the distance between the cycles $p_{AB}p_{BA}$ and $p_{CD}p_{DC}$ is 
		$$\ge \min(l_{G,S}(p_{BC}),l_{G,S}(p_{DA})) - \left(l_{G,S}(p_{BA})+ l_{G,S}(p_{AB})+ l_{G,S}(p_{CD})+ l_{G,S}(p_{DC})\right) > 50K.
		$$
		
		Hence $B(v_1, 2K)$ does not intersect with $p_{CD}p_{DC}$, so reaching any point of $p_{CD}p_{DC}$ is enough for our purpose.
		Since the cycle $p_{AB}p_{BA}$ is isometrically embedded, 
		its intersection with $B(v_1, 2K)$ is a geodesic segment of length $4K$, denote this segment by $p_s$.
		It is  clear that $v_2\not\in p_s$.
		
		\noindent We also observe that $B(v_1,2K)$ can not intersect with both $p_{BC}$ and $p_{DA}$. 
		Indeed, since the cycle $p_{AB}p_{BC}p_{CD}p_{DA}$ is
		isometrically embedded, the distance in $\Gamma(G,S)$ 
		between the subsets $p_{BC}$ and $p_{DA}$ is equal to 
		$\min(l_{G,S}(p_{AB}), l_{G,S}(p_{CD})) \geq 10K$.
		Without loss of the generality we assume that the set $B(v_1, 2K)$ does not intersect with $p_{BC}$.
		In particular, $B \not \in p_s$. 
		Then there is a path $p_{v_2B}$ from $v_2$ to $B$ such that 
		vertices of it belong to $p_{AB}p_{BA} - p_s$.
		Denote by $p$ the path $p_{v_2B}p_{BC}$, this path satisfies the property we were looking for.
		
		Consider some points
		$x_1,x_2 \in G \setminus B(v_1,2K)$. If $d(x_1, x_2) = 1$ and $v_1 <_T x_1 <_T v_3$ 
		then we claim that $v_1 <_T x_2 <_T v_3$, otherwise we find a snake on $4$ points with width $1$ and diameter greater than $K$.
		So, all points of the path $p$ are between $v_1$ and $v_3$, and all points of the circle $p_{CD}p_{DC}$ are greater than $v_1$ (in this sentence ``between'' and ``greater'' refers to the order $T$).
		
		But in the same way we can prove that for some point $v_4\in p_{CD}p_{DC}$ and any point $v$ of $p_{AB}p_{BA}$ we have 
		$v_4 <_T v$, this leads to a contradiction.
	\end{proof}

	\section{Hyperbolic spaces} \label{sec:hyper}
	
	It is known that $\delta$-hyperbolic spaces behave nicely with respect to some questions related to the travelling
	salesman problem. See Krauthgamer and Lee \cite{KrauthgamerLee}  who show that under a natural local assumption on such spaces there exists an
	approximate randomized algorithm for the travelling salesman problem. 
	
	The goal of this section is to prove Theorem \ref{thm:hyperbolicspace}, showing that there is a very efficient order on these spaces to solve the universal travelling salesman  problem.
	In contrast with the context of \cite{KrauthgamerLee},  the constant in the bound for the order ratio function can not be close to one even in the case of metric trees (unless the tree is homeomorphic to a ray, line or an interval, otherwise it clearly admits an embedded tripod).

	In the introduction it was already mentioned that in view of the theorem of  Bonk and Schramm, our main step in the proof of Theorem \ref{thm:hyperbolicspace} is to prove it for uniformly discrete spaces that are quasi-isometric to hyperbolic spaces $\H^d$. 
	We work with a graph that corresponds to a specific tiling of $\H^d$ (also shown on Figure 
	\ref{pic:tilingwithtree2}  for this tiling in the case of $\H^2$).
	The order we define on this graph comes from a  hierarchical order on a certain associated tree (see Figure \ref{pic:tilingwithtree2}). It is well known that the metric of any finite subset of a $\delta$-hyperbolic space can be well approximated by the metric of a tree. A comparison lemma (see for example \cite{ghysdelaharpe}) states that given a subset of cardinality $k$ in a $\delta$-hyperbolic space, its metric can be approximated by the metric of a finite tree
	up to an additive logarithmic error $ \delta O(\ln k)$.
	As we have already mentioned in the introduction, this metric approximation is not enough for our purposes.
	We will work with some trees,  which in some other sense approximate  our space,
	associate an order to these trees and we need check some properties of these trees and their associated orders.

	\subsection{Definitions and basic properties of $\delta$-hyperbolic spaces}
	
	A metric space is said to be  {\it $\delta$-hyperbolic} if there exists $\delta\ge 0$ such that the following holds. For any points $X_1$, $X_2$, $X_3$, $X_4$ consider three numbers
	$|X_1X_2| +|X_3X_4|$, $|X_1X_3|+|X_2X_4|$, $|X_1X_4|+|X_2X_3|$.  Here we use the notation $|X_iX_j|=
	d_M(X_i,X_j)$.
	We require that the difference between the largest of these three numbers and the middle one is at most $\delta$.
	It is well-known that for geodesic metric spaces  this definition (with appropriate choice of $\delta$) is equivalent to several other possible definitions, one of them is in terms of thin triangles. A geodesic triangle is said to be {\it $\delta$-slim} if each of its sides is contained in the $\delta$-neighborhood of the union of the two other sides.
	A geodesic metric space is $\delta$-hyperbolic if there exists $\delta$ such that any geodesic triangle is $\delta$-slim.
	Any Hadamard manifold (complete simply connected Riemannian manifold of sectional curvature $\le \kappa <0$) is $\delta$-hyperbolic for some $\delta$ depending on $\kappa$ (see e.g. Chapter $3$ in \cite{ghysdelaharpe}).
	In particular, a hyperbolic space $\mathbb{H}^d$ is $\delta$-hyperbolic for some $\delta>0$.
	We recall that a {\it quasi-geodesic} is the image of an interval with respect to a quasi-isometric embedding.
	If this quasi-isometry has multiplicative constant $A$ and additive constant $B$, we say that such quasi-geodesic is an $(A,B)$-quasi-geodesic.

	If a metric space is geodesic, a basic property of a $\delta$-hyperbolic space is
	that all $(A,B)$-quasi-geodesics between points $X$ and  $Y$  lie at a bounded distance from any geodesic between $X, Y$, where this distance is estimated in terms of $\delta$, $A$ and $B$.
	This statement is sometimes called the Morse Lemma.
	
	We also recall that if $M_1$ and $M_2$ are quasi-isometric geodesic  metric spaces and $M_1$ is $\delta$-hyperbolic, then there exists $\delta'$ such that $M_2$
	is $\delta'$-hyperbolic.
	For these and other basic properties of $\delta$-hyperbolic spaces see e.g. \cites{ghysdelaharpe,  BridsonHaefliger}. For optimal estimates of the constants in the Morse Lemma see
	\cites{schur, gouezelschur}.

	\subsection{Binary tiling}\label{subsec:bin}
	
	There is a well-known tiling of the hyperbolic plane $\H^2$, which is called
	{\it the binary tiling} and also sometimes called the B{\o}r{\o}czky tiling. A version of this tiling that we describe below
	exists in $(d+1)$-dimensional
	hyperbolic spaces $\H^{d+1}$, $d \geq 0.$
	
	Consider the space  $\R^{d+1}$, the coordinates of this space we denote by $(x_0, x_1, \dots, x_d)$.
	Denote by  $H^{d+1}$ its half-space with  $x_0 > 0$.
	We will use the words {\it up} and {\it down} to refer to increases and decreases in the coordinate
	$x_0$. We also call  changes of this coordinate {\it vertical}, and changes of all the other coordinates {\it horizontal}.
	We subdivide $H^{d+1}$ (viewed as a subset of Euclidean space $\R^{d+1})$ into cubes: for integers $k, a_1, a_2, \dots, a_d$ consider the set of points satisfying
	\[
	\begin{cases} 2^{k} \leqslant x_0 \leqslant 2^{k+1} 
	\\ 2^k a_1 \leqslant x_1 \leqslant 2^k (a_1 + 1) \\
	\dots \\
	2^k a_d \leqslant x_d \leqslant 2^k (a_d + 1) \end{cases}
	\]
	
	We call such a cube  {\it a tile with coordinates} $(k,a_1,\dots, a_d)$,  and when we speak about {\it faces} of this cube we refer to its $d$-dimensional faces. 
	(This tiling of  $\mathbb{R}\times\mathbb{R}_+$, that is for  $\H^2$ and   $d = 1$, is shown on Figure \ref{pic:tilingwithtree2}).
	
	We call the subset  $2^{k} \leqslant x_0 \leqslant 2^{k+1}$  
	{\it the layer of the level $k$}. The layer of level $k$ consists
	of $(d+1)$-dimensional cubes  of the same size, with the sides of length $2^k$. The centers of these cubes form a standard
	Euclidean $d$-dimensional lattice. 
	A tile  with coordinates $(k,a_1,\dots,a_d)$ is adjacent to  $2d$ tiles of the same level  with coordinates $(k, a_1,\dots, a_{i-1}, a_i\pm 1, a_{i+1}, \dots, a_d)$, it is also adjacent to  one of the upper level, namely to $(k+1, \lfloor a_1/2\rfloor,\dots, \lfloor a_{d}/2\rfloor)$ and it is adjacent to  $2^d$ tiles of the lower level of the form $(k-1, 2a_1 + e_1, \dots, 2a_d + e_d)$, where each  $e_i$ is equal to $0$ or $1$.

	Consider the graph $\Gamma_{d+1}$, the vertices of which correspond to (centers of) tiles, and
	two vertices are joined by an edge if the tiles are adjacent. 
	We know that the degree of each vertex
	of  $\Gamma_{d+1}$ is equal to  $2d + 2^d + 1$.

	We define a metric on the space $H^{d+1}$ by putting 
	$$(ds)^2 = \frac{(dx_0)^2 + (dx_1)^2 + \dots + (dx_d)^2}{x_0^2}.
	$$
	We identify $H^{d+1}$ together with this metric with the hyperbolic space $\H^{d+1}$ (half-space model). 
	The distance between two points
	$x = (x_0, x_1, \dots, x_d)$ and 
	$x' = (x'_0, x'_1, \dots, x'_d)$ 
	is then 
	$$
	d_{\H^{d+1}}(x,x') =2\ln
	\frac{\sqrt{\sum_{i=0}^d(x_i-x'_i)^2} +\sqrt{(x_0+x'_0)^2 + \sum_{i=1}^d(x_i-x'_i)^2}}
	{2\sqrt{x_0 x'_0}}.
	$$
	
	Observe that the   graph $\Gamma_{d+1}$ is quasi-isometric to  $\H^{d+1}$. To see this observe that
	the mapping from  $\H^{d+1}$ to $\H^{d+1}$ 
	\[(x_0,\dots,x_d) \to (2^k x_0, 2^k x_1 + 2^ka_1, \dots, 2^k x_d + 2^ka_d)
	\]
	is an isometry of $\H^{d+1}$, as follows from the above mentioned definition of the hyperbolic metric.
	Hence any tile  can be mapped to any other tile of our tiling by an
	isometry of $\H^{d+1}$. 
	Observe also that each tile has a bounded number of faces, and that each point is adjacent to a bounded number of tiles.
	Therefore, our graph is quasi-isometric to $\H^{d+1}$.
	
	\begin{definition}
		For given two vertices $\alpha$ and $\omega$ of the graph $\Gamma_{d+1}$ we consider a path of the following form.
		This path makes from $\alpha$ several (possibly none) steps   upwards to some vertex $z$.
		After this it makes several horizontal steps to some vertex $t$ that is close to $z$: close in the sense that the tiles corresponding to $z$ and $t$ have a common point. 
		We require that  this segment of the path has minimal length among all horizontal paths connecting $z$ and $t$ in $\Gamma_{d+1}$.  
		After that the geodesic makes
		several steps (possibly none) downwards from $t$ to $\omega$. 
		We will call paths of the form described above
		{\emph{up-and-down paths}}. 
		The first group of edges of the path is called its \emph{upwards component}, the last group of edges is called its \emph{downwards component}.
		We call an up-and-down path  which has a minimal number of vertical edges \emph{ a standard up-and-down path.}
		
	\end{definition}
	
	Such standard up-and-down paths are essentially unique, that is, their ``upwards'' and ``downwards'' components
	are uniquely defined, and the only freedom is the choice of at most $d$ horizontal  steps.
	
	\begin{lemma}[Standard quasi-geodesics]\label{le:up_and_down}
		There exist $A, B>0$, depending on the dimension $d$, such that for any two vertices $\alpha$ and  $\omega$ of the graph $\Gamma_{d+1}$ a standard up-and-down path between them is an $(A,B)$-quasi-geodesic in $\Gamma_{d+1}$.
	\end{lemma}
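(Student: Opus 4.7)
The plan is to show that the length $L(\alpha,\omega)$ of the standard up-and-down path between $\alpha$ and $\omega$ is within an additive constant $B = B(d)$ of the graph distance $d_{\Gamma_{d+1}}(\alpha,\omega)$, and then to deduce the quasi-geodesic property by checking that sub-paths of standard paths are themselves standard paths up to a bounded correction.

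For the main estimate, recall that $L(\alpha,\omega) = (\hat{k} - k_\alpha) + h + (\hat{k} - k_\omega)$ with $h \leq d$, where $\hat{k}$ is the smallest level at which the ancestors of $\alpha$ and $\omega$ share a common point. By the structure of the tiling, $\hat{k}$ equals, up to an additive constant depending on $d$, the maximum of $k_\alpha$, $k_\omega$, and $\log_2 D_\infty$, where $D_\infty$ denotes the $\ell^\infty$ horizontal Euclidean distance between the centres of the two tiles. For the matching lower bound on $d_{\Gamma_{d+1}}$, observe that any path from $\alpha$ to $\omega$ reaching maximum level $K$ contains at least $2K - k_\alpha - k_\omega$ vertical edges, and, since a horizontal edge at level $k \leq K$ spans Euclidean distance at most $2^K$, at least $D_\infty/2^K$ horizontal edges. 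Optimising over $K \geq \max(k_\alpha,k_\omega)$ yields
\[
d_{\Gamma_{d+1}}(\alpha,\omega) \;\geq\; \max\bigl(k_\omega - k_\alpha,\; 2\log_2 D_\infty - k_\alpha - k_\omega\bigr) - O_d(1),
\]
which matches $L(\alpha,\omega)$ up to the additive constant $B$.

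To upgrade this to the quasi-geodesic property of the parameterised path $\gamma$, consider two vertices $\gamma(s),\gamma(t)$ on it. If $\gamma(s)$ lies in the upward component and $\gamma(t)$ in the downward component, then the ancestors of these two points at every level strictly below $\hat{k}$ coincide with those of $\alpha$ and $\omega$ respectively, so the minimal apex level for the pair $(\gamma(s),\gamma(t))$ is again $\hat{k}$, and the sub-path of length $t - s$ is precisely the standard path between them. In the degenerate cases where $\gamma(s)$ or $\gamma(t)$ lies on the bounded horizontal segment, the sub-path length exceeds the standard-path length between its endpoints by at most $d$, which can be absorbed into the additive constant. Combining with the main bound gives $t - s \leq d_{\Gamma_{d+1}}(\gamma(s),\gamma(t)) + B'$ for $B'$ depending only on $d$, establishing a $(1,B')$-quasi-geodesic property, which is in fact stronger than the statement of the lemma. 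The principal technical obstacle is the optimisation step in the lower bound for $d_{\Gamma_{d+1}}$: one must balance the vertical cost of climbing to level $K$ against the efficiency of horizontal movement at high altitude, and verify that the minimum is attained (up to $O_d(1)$) at exactly the level $\hat{k}$ that the standard path itself uses, keeping careful track of how the constants propagate through the dimension.
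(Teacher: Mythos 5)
Your overall route is genuinely different from the paper's: the paper never tries to estimate the graph distance explicitly. It observes that subpaths of standard up-and-down paths are standard, so it suffices to bound the length of a standard path by a multiple of $d_{\Gamma_{d+1}}(\alpha,\omega)$, and it obtains the needed lower bounds on the distance, namely $L\ge k_\omega-k_\alpha$ and $L\ge C_2\ln(m+1)$, from the explicit formula for the metric of $\H^{d+1}$ together with the quasi-isometry between $\Gamma_{d+1}$ and $\H^{d+1}$. Your plan stays entirely inside the graph: your formula for the standard-path length, $2\hat k-k_\alpha-k_\omega+O_d(1)$ with $\hat k=\max(k_\alpha,k_\omega,\log_2 D_\infty)+O_d(1)$, and your subpath reduction (a subpath with one endpoint on the upward and one on the downward component has the same apex $\hat k$, and the horizontal segment contributes at most $d$) are both sound, and this combinatorial route would make the lemma self-contained.

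The gap is in the ``optimisation over $K$'' that is supposed to give the additive matching. From ``at least $2K-k_\alpha-k_\omega$ vertical edges'' and ``at least $D_\infty/2^K$ horizontal edges'' (the latter needs a constant in any case, because vertical edges also displace the centre horizontally by up to $2^{K-1}$ per coordinate, so one should count all edges), the most you can conclude about a path with apex level $K$ is $|p|\ge\max\bigl(2K-k_\alpha-k_\omega,\,D_\infty/(c2^K)\bigr)$, and the minimum over $K$ of this maximum is \emph{not} within $O_d(1)$ of $2\log_2 D_\infty-k_\alpha-k_\omega$: with $k_\alpha=k_\omega=0$ and $D_\infty=2^x$ the two expressions balance at $K\approx x-\log_2(2x)$, where the value is $2x-\Theta(\log x)$. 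So your displayed lower bound, and hence the advertised $(1,B')$-quasi-geodesic conclusion, do not follow from the argument as written. Two repairs are available. First, the multiplicative statement, which is all the lemma requires, does follow from your two inequalities: $x-K\le\log_2(c|p|)$ gives $2x-k_\alpha-k_\omega\le|p|+2\log_2(c|p|)\le C|p|+B$, and your subpath reduction then yields an $(A,B)$-quasi-geodesic. Second, if you insist on the additive version, you must use that the level changes by at most one per step and sum the horizontal displacements along the whole path as a geometric series, $D_\infty\le\sum_i c\,2^{l_i}\le c'\,2^{(n+k_\alpha+k_\omega)/2}$, which gives $n\ge 2\log_2 D_\infty-k_\alpha-k_\omega-O_d(1)$; bounding every step's displacement by $2^K$ is too crude to see this.
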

	\begin{proof}
		Observe that a subpath of an up-and-down path is also an up-and-down path, and that this subpath is clearly standard if the initial one is standard. Therefore in order to show that standard up-and-down paths are quasi-geodesics,
		it is sufficient to show that for all $\alpha$ and $\omega$ in $\Gamma_{d+1}$, such paths between $\alpha$ and $\omega$
		have length at most $C L$, where $L$ is the distance between $\alpha$ and $\omega$ and $C$ depends only on $d$.

		Observe that a standard quasi-geodesics between two points can be obtained in the following way:
		given two points   $\alpha$ and  $\omega$, if the level of $\omega$ is greater than or equal than that of $\alpha$,  we first move
		from $\alpha$ upwards  to some point $\alpha'$ of the same level as $\omega$. Then we start moving upwards (simultaneously) from $\alpha'$ and $\omega$ until we reach for the first time the points of the same level that are close in the sense described in the definition of
		up-and-down paths, see Figure \ref{pic:path}.
		
		Let $m$ be the shortest length of a path among paths that move only horizontally from $\alpha'$ to $\omega$.
		Observe that the length of a standard up-and-down path between $\omega$ and $\alpha'$  
		is at most $d + 2\log_2 m$.
		The length of a standard up-and-down path connecting $\alpha$ and $\alpha '$ is clearly the difference of their levels $k_{\alpha'} - k_{\alpha }$.  
		This shows that, for some positive $C_1$ depending on $d$, the length of a standard up-and-down path 
		between $\alpha$ and $\omega$ in $\Gamma_{d+1}$ is  at most $C_1 \ln (m+1) +k_{\alpha '} -k_\alpha=
		C_1 \ln (m+1) + k_\omega - k_\alpha$. 
		
		On the other hand, for any path between $\omega$ and $\alpha$ in $\Gamma_{d+1}$, observe that its length $L$ satisfies $L \ge  k_\omega - k_\alpha$.
		From the distance formula of the half-space model we have 
		
		$$
		d_{\H^{d+1}}(\alpha,\omega) \ge C_2 \ln(m+1),
		$$
		
		where $C_2>0$ depends only on $d$. 
		Since $\Gamma_{d+1}$ and $\mathbb{H}^{d+1}$ are quasi-isometric, we also conclude that  
		$L \ge C_3 \ln (m+1)$ for some $C_3>0$ depending only on $d$.
		Hence the length of our up-and-down path is $\le (\frac{C_1}{C_3}+1)L$.
	\end{proof}
	
	We will also call standard up-and-down paths 
	{\it  up-and-down quasi-geodesics} and {\it standard quasi-geodesics}.

	If we relax the closeness condition in the definition of up-and-down paths by saying that $z$ and $t$ are of the same vertical level and at bounded distance (the bound depends on $d$), one can moreover show that a geodesic between any two points in $\Gamma_{d+1}$
	can be chosen in this standard form. 
	We do not use this statement in our paper, but we mention that it can lead to an alternative proof of Lemma \ref{le:up_and_down}. 
	We are grateful for the referee for indicating us that such proof is somehow analogous to Lemma 3.10 in \cite{manning}.

	Let us say that an edge in $\Gamma_{d+1}$ is {\it vertical} if this edge has a non-zero change of $x_0$ coordinate.
	Now consider a graph which  has the same vertex set as $\Gamma_{d+1}$ and where the edges consist of just the vertical edges of
	$\Gamma_{d+1}$. We denote this graph by $\overline{\Gamma_{d+1}}$.
	
	\begin{rem} The graph $\overline{\Gamma_{d+1}}$ is a forest. Indeed, observe that any point has only one vertical ascendant, hence this graph does not have cycles.
		This forest consists of $2^{d}$ disjoint trees. For example, in the case $d=1$, these two trees correspond to the two quadrants $x_1<0$,
		and $x_1>0$, in the upper half plane $x_0>0$.
	\end{rem}

	Denote by $\mathcal{F}_{d+1}$ one of the connected components of $\overline{\Gamma_{d+1}}$. 
	We know that $\mathcal{F}_{d+1}$ is a tree.  We can introduce an orientation on the edges of $\mathcal{F}_{d+1}$, saying that the edges are oriented downwards. 
	\begin{rem}
		Any finite set of vertices of $\Gamma_{d+1}$ can be moved to the  component $\mathcal{F}_{d+1}$ by an isometry of $\H^{d+1}$. 
	\end{rem}
	
	By Lemma \ref{rem:goedel} if we want to  find an order on  $\Gamma_{d+1}$ with bounded order ratio function, 
	it is sufficient to find an order with this property on the connected component $\mathcal{F}_{d+1}$. 
	We will check this claim for  orders on $\mathcal{F}_{d+1}$ with convex
	branches.
	
	In this section when we discuss branches of $\mathcal{F}_{d+1}$ we mean branches in the sense of an oriented tree:
	a branch of $\mathcal{F}_{d+1}$ corresponds to the set of tiles belonging to 
	
	\begin{equation} \label{eq:branch}
\begin{cases}  x_0 \leqslant 2^{k+1} 
\\ 2^k a_1 \leqslant x_1 \leqslant 2^k (a_1 + 1) \\
\dots \\
2^k a_d \leqslant x_d \leqslant 2^k (a_d + 1) \end{cases}
	\end{equation}

	for some choice of integers $a_1, \dots, a_d$ and $k$.
	
	Observe that Lemma \ref{le:convex} implies that  $\mathcal{F}_{d+1}$ admits  orders $T$ such that 
	branches are convex
	with respect to $T$.
	We also described explicitly such an order for a
	finite  tree in the beginning of Subsection \ref{subsec:trees}, this construction can easily be modified for the infinite tree $\mathcal{F}_{d+1}$. 
	
	Now we prove
	
	\begin{figure}
		\centering
		\begin{tikzpicture}[scale=0.8, every node/.style={scale=0.8}]
		\newcommand{\bintile}[5]{
			\begin{scope}[shift={(#1,#2)}, scale = 0.5*#3]
			\draw[fill=#5] (0,0) -- (0,1) -- (1,1) -- (1,0) -- (1,0) -- (0,0) -- cycle ;
			\draw (0.5,0.5)
			node{\scalebox{#3}{#4}};
			\end{scope}
		}
		
		\foreach \x in {0,...,3}{
			\bintile{4*\x}{4}{8}{}{white}
		}
		
		\foreach \x in {0,...,7}{
			\bintile{2*\x}{2}{4}{}{white}
		}
		
		\foreach \x in {0,...,15}{
			\bintile{\x}{1}{2}{}{white}
		}
		
		\foreach \x in {1,...,30}{
			\bintile{0.5*\x}{0.5}{1}{}{white}
		}
		\foreach \x in {2,...,61}{
			\bintile{0.25*\x}{0.25}{0.5}{}{white}
		}
		\bintile{2}{2}{4}{$x$}{gray!40!white}
		\bintile{2}{1}{2}{}{gray!40!white}
		\bintile{3}{1}{2}{}{gray!40!white}
		\foreach \x in {0,...,3}{
			\bintile{2 + 0.5*\x}{0.5}{1}{}{gray!40!white}
		}
		\foreach \x in {0,...,7}{
			\bintile{2 + 0.25*\x}{0.25}{0.5}{}{gray!40!white}
		}
		\filldraw [fill=gray!40!white] (2,0.) rectangle (4,0.25);
		
		\draw[dotted] (0,0) -- (16,0);
		\bintile{8}{2}{4}{}{green!20!white}
		\bintile{12}{2}{4}{}{green!20!white}
		\bintile{8}{4}{8}{$z$}{green!20!white}
		\bintile{12}{4}{8}{$t$}{green!20!white}
		\bintile{9}{1}{2}{}{green!20!white}
		\bintile{12}{1}{2}{}{green!20!white}
		\bintile{9}{0.5}{1}{$\alpha'$}{green!20!white}
		\bintile{12.5}{0.5}{1}{$\omega$}{green!20!white}
		\bintile{9}{0.25}{0.5}{$\alpha$}{green!20!white}
		\draw (0,8) -- (0,9);
		\draw (8,8) -- (8,9);
		\draw (16,8) -- (16,9);
		
		\end{tikzpicture}
		\caption{Branch of a tile $x$ and standard up-and-down path between tiles $\alpha$ and $\omega$.} \label{fig:box}
		\label{pic:path}
	\end{figure}
	
	\begin{lemma}\label{lem:insidethebox}[Standard paths are inside branches]
		Suppose that  $\alpha$ and $\omega$ belong to some branch of the tree $\mathcal{F}_{d+1}$. 
		Then the vertices of  any standard
		up-and-down path between $\alpha$ and $\omega$ also belong to this branch.
	\end{lemma}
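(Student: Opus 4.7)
The plan is to analyze each segment of the standard up-and-down path separately and verify it lies in the branch. Write the root of the branch as a tile $B$ with coordinates $(k_B, a^B_1, \dots, a^B_d)$; unwinding the description given just before the lemma, a tile $(k, c_1, \dots, c_d)$ lies in this branch exactly when $k \le k_B$ and $2^{k_B-k} a^B_i \le c_i \le 2^{k_B-k}(a^B_i+1) - 1$ for each $i$. Both $\alpha$ and $\omega$ satisfy this by hypothesis. Let $k^*$ denote the common level of the endpoints $z$ and $t$ of the horizontal segment. My first task is to show $k^* \le k_B$. Since $\alpha$ and $\omega$ lie in the branch of $B$, their respective vertical ancestors at level $k_B$ are both equal to $B$ (as follows by iterating the formula $(k,c)\mapsto(k+1,\lfloor c/2 \rfloor)$ for the parent map and using the branch inequalities for $\alpha$ and $\omega$). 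So the concatenation of the upward path from $\alpha$ to $B$, the trivial horizontal path at $B$, and the downward path from $B$ to $\omega$ is an up-and-down path of the desired form, with plateau level $k_B$. The minimality condition in the definition of standard up-and-down path then forces $k^* \le k_B$, since the number of vertical edges $(k^* - k_\alpha) + (k^* - k_\omega)$ is increasing in $k^*$.

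Next I will check that the two vertical segments lie in the branch. The level-$k$ vertical ancestor of $\alpha$ (for $k_\alpha \le k \le k^*$) has horizontal coordinates $\lfloor a^\alpha_i / 2^{k-k_\alpha} \rfloor$. Dividing the branch inequalities for $\alpha$ by $2^{k - k_\alpha}$ and using $k \le k_B$ (from the first step), a direct check gives $2^{k_B-k} a^B_i \le \lfloor a^\alpha_i / 2^{k-k_\alpha} \rfloor \le 2^{k_B-k}(a^B_i+1)-1$, so this ancestor is in the branch. The same argument applied to $\omega$ handles the downward segment from $t$ to $\omega$. As for the horizontal segment at level $k^*$, both its endpoints $z$ and $t$ are in the branch by the previous step, and the ``close'' condition in the definition of an up-and-down path forces their horizontal coordinates to differ by at most $1$ in each entry. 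A shortest path between them in the level-$k^*$ grid of $\Gamma_{d+1}$ changes one coordinate at a time monotonically, so it stays inside the coordinate box $\prod_i [\min(z_i, t_i), \max(z_i, t_i)]$, which sits inside the horizontal range of the branch at level $k^*$. Hence every intermediate vertex is also in the branch.

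I do not expect any genuine obstacle. The substantive point is the first step, showing that the plateau level cannot exceed $k_B$: the minimality built into the definition of \emph{standard} rules out any detour above the root of the branch. Once this is in place, the remaining two claims are immediate consequences of the explicit formula for the parent map and of the fact that geodesics in a grid stay within their bounding box.
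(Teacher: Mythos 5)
Your proof is correct. It differs from the paper's mainly in packaging: the paper argues by contradiction, using the structural fact that the only vertical edge leaving a branch of $\mathcal{F}_{d+1}$ passes through its root, so a standard path that exits the branch would have to climb above the root and descend back through it, and truncating at the root would give an up-and-down path with strictly fewer vertical edges — contradicting standardness. You use the very same minimality in a direct way (comparing with the explicit up-and-down path $\alpha \to B \to \omega$ through the branch root $B$, with empty horizontal part) to bound the plateau level by $k_B$, and then verify branch membership of each segment by the explicit coordinate description of tiles: iterated floor division for the two vertical segments, and the bounding-box property of $\ell^1$-geodesics in the level-$k^*$ grid for the horizontal segment (which plays the role of the paper's observation that the horizontal part cannot be partly inside and partly outside the branch). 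So the two key ingredients — minimality of vertical edges forbidding an overshoot of the branch root, and horizontal "convexity" of the branch at each level — are the same; your version is more computational and self-contained, the paper's is shorter and coordinate-free.
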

	\begin{proof}
		Let $x$ be a vertex of $\mathcal{F}_{d+1}$ such that $\alpha$ and $\omega$ belong to the branch of $x$.
		Let 
		$$x = y_0, y_1, \dots, y_n = \alpha$$
		be the shortest path connecting $x$ and $\alpha$ and let 
		$$x = z_0, z_1, \dots, z_m = \omega$$
		be the shortest path connecting $x$ and $\omega$.
		If one of these paths contains the other, the statement of the Lemma is clear.
		
		Otherwise there exists a maximal index $k$ such that $y_k \neq z_k$ but the tiles corresponding to $y_k$ and $z_k$ have a common point.
		For any standard up-and-down path connecting $\alpha$ and $\omega$ its upwards component is $(y_n, y_{n-1}, \dots y_k)$ and its downwards component  is $(z_k, \dots, z_m)$.
		Consider arbitrary vertex $v$ of its horizontal component. Any of its coordinates is equal to the corresponding coordinate of $y_k$ or $z_k$, and from formula \ref{eq:branch} it follows that $v$ also belongs to the branch of $x$.
		The Lemma follows.
	\end{proof}

	\begin{lemma}[Multiplicity upper bound on paths with respect to hierarchical orders] \label{lem:multiplicity}
		Let  $T$ be an  order on $\Gamma_{d+1}$ such that all the branches of the tree $\mathcal{F}_{d+1}$ are convex with respect to $T$. 
		Consider distinct vertices
		$\alpha_0 >_T \alpha_1 >_T \dots >_T \alpha_n$
		and for each  $0\leqslant i < n$ assume that the sequence of tiles
		\[(u^i_0, u^i_1, u^i_2, \dots, u^i_{n_i}),
		\]
		with $u^i_0= \alpha_i$ and $u^i_{n_i} = \alpha_{i+1}$
		is a standard up-and-down quasi-geodesic, we denote it by $p_{i+1}$.
		Then each vertical edge   appears at most once among upwards edges of the above mentioned paths $p_1, \dots, p_n$
		and at most
		once among downwards edges.
		There exists $C$ depending only on $d$ such that any vertex belongs to at most $C$ paths among  $p_1, \dots, p_n$. 
	\end{lemma}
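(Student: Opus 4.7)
My plan is to handle the two claims separately, both using convexity of branches together with Lemma~\ref{lem:insidethebox}.

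For the first claim, I would argue by contradiction. Fix a vertical edge $\{u, v\}$ with $u$ the parent of $v$ and suppose $p_i$ and $p_j$, with $i < j$, both traverse it upward. The upward direction forces $\alpha_{i-1}$ and $\alpha_{j-1}$ to be descendants of $v$ (or equal to $v$), so both lie in the branch of $v$. Since $\alpha_0 >_T \alpha_1 >_T \cdots >_T \alpha_n$ and this branch is convex with respect to $T$, every intermediate $\alpha_\ell$ with $i-1 \le \ell \le j-1$ also lies in the branch of $v$; in particular $\alpha_i$ does. Then both endpoints of $p_i$ lie in the branch of $v$, so by Lemma~\ref{lem:insidethebox} the path $p_i$ stays inside this branch. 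This is absurd, because $p_i$ passes through $u$, which is not in the branch of $v$. The downward case is symmetric: if $p_i$ and $p_j$ (with $i < j$) both use $\{u,v\}$ downward, then $\alpha_i$ and $\alpha_j$ lie in the branch of $v$, hence so does $\alpha_{j-1}$ by convexity, and the same contradiction applies to $p_j$.

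For the second claim, I would bound the number $M_w$ of paths through a fixed vertex $w$ by counting edge-incidences at $w$. The vertex $w$ has $1 + 2^d$ vertical neighbors (one parent, at most $2^d$ children) and at most $2d$ horizontal neighbors. Part (1) immediately bounds the vertical edge-uses at $w$ by $2(1 + 2^d)$. The key remaining step is to bound horizontal edge-uses by a similar constant. For this, I claim each horizontal edge $e'$ at level $k$ is used by at most $N_d^2$ paths, where $N_d$ is the number of lattice points within horizontal distance $d$ at a single level. Indeed, if $p_\ell$ uses $e'$, then the endpoints $z_\ell, t_\ell$ of its horizontal part both lie within horizontal distance $d$ of $e'$, so $(z_\ell,t_\ell)$ takes at most $N_d^2$ values. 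For a fixed pair $(z, t)$ with $z \neq t$, if two paths $p_\ell, p_m$ (with $\ell < m$) realized it, then $\alpha_{\ell-1}, \alpha_{m-1}$ would lie in the branch of $z$ while $\alpha_\ell, \alpha_m$ would lie in the branch of $t$; convexity of the branch of $z$ forces $\alpha_\ell$ into the branch of $z$, contradicting the fact that distinct branches at the same level are disjoint.

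Finally, each path through $w$ contributes $1$ edge-use at $w$ if $w$ is an endpoint of the path and $2$ otherwise; since the $\alpha_\ell$'s are distinct, $w$ is an endpoint of at most two paths. Combining the three bounds, $M_w \le (1 + 2^d) + d\,N_d^2 + 2$, a constant depending only on $d$. The main obstacle is the horizontal-edge bound, since the vertical case is a direct corollary of part~(1); the horizontal case requires the additional observation that branches at the same level are disjoint, which together with convexity of the ordering rules out two distinct paths using the same horizontal edge with identical horizontal-part endpoints.
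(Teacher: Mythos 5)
Your proposal is correct. For the first claim it is essentially the paper's argument: an upward traversal of a vertical edge $(v,u)$ (with $u$ the parent) pins the starting vertex of that path into the branch of $v$, and convexity of branches plus Lemma \ref{lem:insidethebox} rules out a second traversal; the paper phrases this as ``once $p_{i+1}$ leaves the branch of $w$, no later $\alpha_j$ can re-enter it,'' while you sandwich $\alpha_i$ (resp.\ $\alpha_{j-1}$) into the branch and contradict the path passing through the parent, but the ingredients and logic are the same.

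For the second claim your route genuinely differs from the paper's in the key counting step. The paper reduces vertex multiplicity to edge multiplicity and then handles horizontal edges by a charging argument: each passage of a path through a horizontal edge $f$ has, within $d$ steps along that path, either a vertical (upward or downward) edge of the path or one of the endpoints $\alpha_i$; since vertical edges have multiplicity at most one in each direction by part (1), the $\alpha_i$ are distinct, and the graph has bounded degree, this bounds the number of passages through $f$. You instead bound the multiplicity of a horizontal edge directly and structurally: a path using it determines the ordered pair $(z,t)$ of endpoints of its horizontal segment, which lies among boundedly many same-level tiles near the edge, and two distinct paths cannot realize the same pair because convexity of the branch of $z$ would force some $\alpha_\ell$ into both the branch of $z$ and the branch of $t$, which are disjoint for distinct tiles of the same level. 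Your version thus does not rely on part (1) for the horizontal bound and yields an explicit constant ($(1+2^d)+d\,N_d^2+2$ after the edge-incidence count at a vertex, with endpoints of paths handled separately), at the cost of one extra observation (disjointness of same-level branches); the paper's charging argument is shorter but gives a less explicit constant. Both are valid, and your vertex count correctly accounts for a path meeting a vertex via one incident edge (endpoint case, at most two such paths since the $\alpha_i$ are distinct) or two (interior case).
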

	
	\begin{proof}
		Let $e=(v,w)$ be some edge in $\mathcal{F}_{d+1}$, with $w$  one level lower than $v$.
		Suppose that this edge $e$  belongs to the upward part of  the standard up-and-down path $p_{i+1}$ between
		$\alpha_i$ and $\alpha_{i+1}$. Take $j>i$ and let us show that $e$ can not belong to $p_{j+1}$. 
		
		Indeed, $\alpha_i$ belongs to the branch of $w$. We know that the level of $v$ is higher than that of $w$, and hence
		$v$ does not belong to the branch of $w$. We can therefore
		conclude
		by Lemma \ref{lem:insidethebox} that $\alpha_{i+1}$ is also not inside the branch of $w$. 
		Since branches of the tree are convex with respect to our order, this implies that for  any $j\ge i+1$ the vertex $\alpha_j$ does not
		belong to the branch of $w$ (since by assumption of the lemma $\alpha_{i+1}$ is between $\alpha_i$ and $\alpha_j$, and since we know that $\alpha_i$ is in the branch, and $\alpha_{i+1}$ is not in the branch).
		This implies that
		any path going upward from $\alpha_{j}$ does not enter this branch. In particular, the edge $e$ is not on the standard up-and-down path from $\alpha_{i+1}$ to $\alpha_{i+2}$.
		The case of downwards paths is analogous.
		
		Now to prove the last claim of the lemma observe that it is sufficient to bound the multiplicity
		of all (not necessarily  vertical) edges on our path. (Indeed, any but the very last vertex of our path belongs to an edge of our path
		starting from this vertex).
		
		Consider a horizontal edge $f$. 
		Observe that each standard up-and-down quasi-geodesic that contains  $f$ is either {\it short}, i.e. has length $\le d$, or {\it long} and contains a vertical edge at distance $\le d$ from $f$.
		The total number of possible short paths that contain $f$ is bounded.
		The number of long paths among $(p_i)$ that contain $f$ is bounded by the doubled number of vertical edges at distance $\le d$ from $f$.
	\end{proof}

	Take a subset $X$ in $\H^d$ and a point $x\in \H^d$.
	Let us call {\it a cone} the union of all geodesics between
	$x$ and $y$, where the union is taken over all $y\in X$. We denote this cone by   $\Cone (x, X)$.
	This definition can be also given for an arbitrary metric space $M$
	if we consider the union of all geodesics between $x$ and $y$, but we will use  this notion only for $M = \H^d$.
	
	For a set $X$ we denote by  $X_{\varepsilon}$ its $\varepsilon$-neighborhood.
	The standard volume in the space $\H^d$ is denoted by  $\Vol$,
	the volume of a ball of radius $\varepsilon$ is denoted by $\Vol(B(\cdot, \varepsilon))$.
	Clearly, this volume does not depend on the center of the ball, and in the notation above we write $\cdot$ without introducing the notation for the center. 
	
	\begin{lemma}[Linear bound for the  volume of neighborhoods of a cone]\label{le:cone}
		For all $\varepsilon > 0$ there exists  $C$, depending on $d$ and $\varepsilon$,
		such that if  $\gamma\in \H^d$  is a curve of length $l$ and $x$ is a point
		of $\gamma$, 
		then the volume of the $\varepsilon$-neighborhood of the cone satisfies  ${\rm Vol}(\Cone (x, \gamma)_{\varepsilon}) \leqslant C(l+1)$.
	\end{lemma}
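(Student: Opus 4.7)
The plan is to compute the volume in polar coordinates $(r,\theta) \in [0,\infty) \times S^{d-1}$ centred at $x$, in which the hyperbolic volume element is $\sinh^{d-1}(r)\,dr\,d\Omega$, so that
\[
\Vol(\Cone(x,\gamma)_\varepsilon) = \int_0^\infty \sinh^{d-1}(r)\,\Omega(r)\,dr,
\]
where $\Omega(r)$ denotes the solid angle on $S^{d-1}$ of those directions $\theta$ such that $(r,\theta)\in\Cone(x,\gamma)_\varepsilon$. First I would split $\gamma$ at $x$ into sub-arcs emanating from $x$ and handle each separately; on such a sub-arc, parametrized by arc length $\sigma:[0,l]\to\H^d$ with $\sigma(0)=x$, let $v(t) \in S^{d-1}$ be the direction from $x$ to $\sigma(t)$ and $R(t)=d(x,\sigma(t))$. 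At radius $r$ the cone occupies the $1$-dimensional set of directions $V_r := \{v(t):R(t)\geq r\}\subset S^{d-1}$; denote its arc length by $L(r)$.

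The proof rests on two geometric estimates. First, decomposing $\sigma'(t) = R'(t)\partial_r + \eta_\perp(t)$ at $\sigma(t)$ and using a Jacobi-field computation in curvature $-1$, the angular speed is $|v'(t)| = |\eta_\perp(t)|/\sinh R(t)$, hence $L(r) \leq l/\sinh r$. Second, the hyperbolic law of cosines yields the metric comparison: for $r,r'\geq\varepsilon$, a distance $\leq \varepsilon$ between $(r,\theta)$ and $(r',\theta')$ in $\H^d$ forces $|r-r'|\leq \varepsilon$ and $\angle(\theta,\theta') \leq C_1\varepsilon/\sinh r$. Since $V_{r'}\subset V_{r-\varepsilon}$ for $r'\in[r-\varepsilon,r+\varepsilon]$, I conclude that for $r\geq 2\varepsilon$,
\[
\Omega(r) \leq C_2\,L(r-\varepsilon)\left(\tfrac{\varepsilon}{\sinh r}\right)^{d-2} + C_3\!\left(\tfrac{\varepsilon}{\sinh r}\right)^{d-1},
\]
the first summand being a spherical $(d-2)$-tube around the curve $V_{r-\varepsilon}$ and the second accounting for two endpoint spherical caps.

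The $r\leq 2\varepsilon$ region contributes $\leq |S^{d-1}|\int_0^{2\varepsilon}\sinh^{d-1}(r)\,dr = O_{d,\varepsilon}(1)$ by the trivial bound $\Omega(r) \leq |S^{d-1}|$. For $r\geq 2\varepsilon$ the factor $\sinh^{d-1}(r)$ cancels the $(\sinh r)^{-(d-2)}$ in $\Omega(r)$, leaving $C_2\varepsilon^{d-2}\int_{2\varepsilon}^{l+\varepsilon}\sinh(r)\,L(r-\varepsilon)\,dr$; Fubini rewrites this as $C_2\varepsilon^{d-2}\int_{\{t:R(t)\ge\varepsilon\}}(|\eta_\perp(t)|/\sinh R(t))\,(\cosh(R(t)+\varepsilon)-\cosh 2\varepsilon)\,dt$, and the inequality $\cosh(R+\varepsilon)/\sinh R \leq \cosh\varepsilon\coth\varepsilon+\sinh\varepsilon$ (valid for $R\geq\varepsilon$) bounds this by $C(d,\varepsilon)\cdot l$. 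The endpoint term gives $O_{d,\varepsilon}(l)$ similarly. Summing produces $\Vol(\Cone(x,\gamma)_\varepsilon)\leq C(d,\varepsilon)(l+1)$. The main obstacle is the geometric comparison $\angle(\theta,\theta')\leq C_1\varepsilon/\sinh r$, which requires a careful analysis of the hyperbolic law of cosines separating radial and angular discrepancies; once in hand, the remarkable cancellation in the Fubini step is exactly what tames the exponential growth of the volume element against the intrinsic $2$-area of the cone, $\int_0^l|\eta_\perp(t)|\tanh(R(t)/2)\,dt\leq l$, converting an a priori exponential bound into the linear one.
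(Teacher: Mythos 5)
Your argument is correct in substance but takes a genuinely different route from the paper's. The paper reduces to a closed piecewise-geodesic curve, cuts the cone at $x$ into sub-cones of angular length at most $\pi$, unfolds (develops) each sub-cone isometrically onto a polygon in $\H^2$ of perimeter at most twice the length of the corresponding arc, invokes the linear isoperimetric inequality in the hyperbolic plane to bound the polygon's area, builds from the area and the perimeter an $\varepsilon$-net of cardinality linear in the length, and pushes it forward by the distance-nonincreasing unfolding map; the volume of the $\varepsilon$-neighbourhood is then at most the number of net points times the volume of a $2\varepsilon$-ball. You instead work directly in polar coordinates at $x$, using the angular-speed identity $|v'|=|\eta_\perp|/\sinh R$, the law-of-cosines comparison $\angle(\theta,\theta')\le C_1\varepsilon/\sinh r$, a spherical tube estimate, and Fubini. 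Both proofs ultimately rest on the same fact --- the lateral $2$-dimensional area of a hyperbolic cone is linear in the length of the curve --- which the paper extracts via isoperimetry in $\H^2$ and you via the explicit cancellation $\int|\eta_\perp|\tanh(R/2)\,dt\le l$; your version avoids the reduction to piecewise geodesics and the unfolding/net machinery and bounds the neighbourhood's volume directly, at the cost of heavier coordinate computations.

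One step needs patching: the superlevel set $\{t:\ R(t)\ge r-\varepsilon\}$ need not be an interval, so $V_{r-\varepsilon}$ is in general a union of arcs rather than a single curve, and the cap term in your bound for $\Omega(r)$ must be multiplied by the number of components, which is not bounded pointwise in $r$. The repair is cheap: extra components arise only at crossings of the level $r-\varepsilon$ by $R$, and the coarea (Banach indicatrix) inequality gives $\int_0^\infty \#\{\text{crossings of level }s\}\,ds\le\int_0^l|R'(t)|\,dt\le l$, so after the $\sinh^{d-1}r$ factor cancels $(\sinh r)^{-(d-1)}$ the total cap contribution is still $O_{d,\varepsilon}(l+1)$. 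With that emendation (and counting the ball $B(x,2\varepsilon)$ once, not once per sub-arc of $\gamma$ through $x$) your proof is complete.
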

	
	\begin{proof}
		Note that adding a geodesic interval to our curve, we obtain a closed curve of length at most $2l$. 
		Therefore
		we can assume that our curve in the assumption of the lemma is closed.
		
		If we move the point $x$ a small distance away from $\gamma$ and replace $\gamma$ by another curve that is close to $\gamma$ in the sense of the Hausdorff metric, then the original cone is contained in a small neighbourhood of the new one. 
		Therefore we can also assume that $x$ is at distance $\le \varepsilon$ from $\gamma$ but $x\not \in \gamma$. 
		We also assume that $\gamma$ is a union of geodesics. 
		Further we will use the estimation $d(x,\gamma) < l$.
		
		In the sequel we use the following notation. Given a continuous path $\kappa$, its length is denoted  by  $|\kappa|$.
		Given a metric space $M$ and $\delta > 0$, we 
		will use the notation
		$$
		N(\delta, M)= \min \{ n : \exists M'\subseteq M, \# M' = n, M \subseteq M'_{\delta}\},
		$$
		that is $N(\delta,M)$ is the minimal cardinality  of a $\delta$-net of $M$.
		
		Observe that for  $M \subset \mathbb{H}^d$ we have
		\begin{equation}\label{eq:coneball}
		\Vol(M_{\varepsilon}) \leq \Vol(B(\cdot, 2\varepsilon)) N(\varepsilon,M).
		\end{equation}
		The volume $\Vol(B(\cdot, 2\varepsilon))$ depends clearly only on  $\varepsilon$ and $d$.
		Therefore, it is sufficient to prove a linear upper bound for $N(\varepsilon, \Cone(x, \gamma))$.

		For a piecewise geodesic curve $\theta = y_1y_2\dots y_m$, (with $y_iy_{i+1}$ geodesic for all $i<m$) and $x \notin \theta$,  define the {\it angular length} of $\theta$  with respect to  $x$ as
		$$
		\Angle_x(\theta) = \angle y_1xy_2 +\dots + \angle y_{m-1}xy_m.
		$$
		
		Decompose the curve $\gamma$ into pieces $\gamma_0, \gamma_1, \gamma_2, \dots, \gamma_n$ in such a way that  $\Angle_x(\gamma_0) \leq \pi$ and
		$\Angle_x(\gamma_i) = \pi$ for  $ 1 \leq i \leq n$.
		
		It is clear that  $\Cone(x,\gamma)$ is the union  of cones $\cup_i \Cone(x, \gamma_i)$.
		Let us show that for some  $C$, 
		depending on $d$ and $\varepsilon$, we have
		\begin{equation}\label{eq:cone0}
		N(\varepsilon, \Cone(x, \gamma_0)) \leq C\cdot |\gamma| + 1,
		\end{equation}

		and
		for $1 \leq i \leq n, $
		\begin{equation}\label{eq:cone1}
		N(\varepsilon, \Cone(x, \gamma_i)) \leq C\cdot |\gamma_i| + 1.
		\end{equation}
		
		Moreover, we will show that the $\varepsilon$-nets that assure the inequalities 
		above can be chosen among $\varepsilon$-nets containing $x$. 
		This would imply the claim of the lemma, because the union of these  $\varepsilon$-nets is an $\varepsilon$-net for $\Cone(x,\gamma)$, and we have the estimation 
		\begin{equation}\label{eq:conesum}
		N(\varepsilon, \Cone(x,\gamma))\leq 1+ C\cdot|\gamma| + \sum_{i: 1\le i \le m}C\cdot |\gamma_i| \le 2Cl + 1.
		\end{equation}
		
		\noindent The $+1$ term corresponds to the point $x$ which belongs to each of our $\varepsilon$-nets.
		
		Now we will prove our statement for $\gamma_i$, we assume that it decomposes as a piecewise geodesic  $\gamma_i = y^i_0\dots y^i_{k^i}$.
		The cone  $\Cone(x, \gamma_i)$ is a union of  $k^i$ triangles.
		In the hyperbolic plane  $\mathbb{H}^2$  consider a polygon (possibly degenerate) consisting of the same triangles:
		$\Phi_i = XY^{i}_0Y^{i}_1\dots Y^{i}_{k^i}$ 
		is such that  the triangle $y^{i}_jxy^{i}_{j+1}$ 
		is isometric to the triangle $Y^{i}_jXY^{i}_{j+1}$ for all  $0 \leq j < k^i$.
		(Such $\Phi_i$ is what is called a {\it net}, not to be confused with $\varepsilon$-nets).
		The  perimeter of  $\Phi_0$ can be  estimated by 
		$d(x,y_0^0) + |\gamma_0| + d(x,y^0_{k^0})\leq 5|\gamma|$.
		For all  $i\ge 1$ observe that
		$\angle {Y}^i_0X{Y}^i_{k^i}=\pi$, 
		this implies that the union of  $Y^i_0X$ and $XY^i_{k^i}$ forms a geodesic,
		and hence  $|Y^i_0XY^i_{k^i}| \leq |\gamma_i|$, and therefore,  the perimeter $p_i$ of $\Phi_i$ is at most $2 |\gamma_i|$.
		
		By a linear isoperimetric inequality in $\H^2$ (see e. g. \cite{teufel} for a more general version), we know that a loop  $K$, of length $L$,  in a hyperbolic plane of constant curvature $-c$ is the boundary of  a  figure of volume (area in $\H^2$) at most  $L/\sqrt{c}$.
		
		Denote by  $\Phi'_{i}$ the set of points of  $\Phi_i$  that are at distance  greater than or equal to
		$\varepsilon/2$ from its boundary.
		Consider  an $(\varepsilon,\varepsilon)$-net $\mathcal{U}_i$ of $\Phi'_i$ (such a net exists by Remark \ref{re:separatednet}).
		Its cardinality $\#\mathcal{U}_i$ is at most  $\frac{p_i}{S\sqrt c}$, where  $S$ is the volume (the area in $\H^2$)  of a ball of radius $\varepsilon/2$.
		We can also find an $(\varepsilon/2)$-net  $\mathcal{W}_i$ of the boundary $\partial \Phi_i$ of  $\Phi_i$ such that $X\in \mathcal{W}_i$ and $\#\mathcal{W}_i \leq \frac{p_i}{\varepsilon/2} +1$.
		Observe that $\mathcal{W}_i$ is an $\varepsilon$-net of $\Phi_i \setminus \Phi'_i$.
		
		We get therefore an $\varepsilon$-net $\mathcal{U}_i \cup \mathcal{W}_i$ of $\Phi_i$ of cardinality at most  $2p_i/\varepsilon + \frac{p_i}{S\sqrt c} + 1$ points (including $X$).
		To show that there exists an  $\varepsilon$-net of $\Cone(x,\gamma_i)$ of the same cardinality,  observe the following.
		The polygon $\Phi_i$ is obtained by unfolding cone $\Cone(x,\gamma_i)$ of piecewise geodesic curve $\gamma_i$ of angular length at most $\pi$,
		then $\Phi_i$ maps to this cone by a map that does not increase distances.
		Assuming the estimations on $p_i$, we have proved inequalities \ref{eq:cone0} and \ref{eq:cone1} for
		
		\begin{equation}
		C = \frac{10}{\varepsilon} + \frac{5}{S\sqrt{c}}.
		\end{equation}

		Finally, from inequalities \ref{eq:coneball} and \ref{eq:conesum} we obtain
		
		\begin{equation}\label{eq:conefinal}
		{\rm Vol}(\Cone (x, \gamma)_{\varepsilon}) \leqslant \Vol(B(\cdot, 2\varepsilon)) \cdot (2Cl + 1).
		\end{equation} 
		
	\end{proof}

	In the claim of the Lemma below the word ``convex'' refers to adding paths between two points, not to Definition \ref{def:convex} of convexity
	with respect to an order. 
	Note that in contrast to the result of \cite{benjaminironen} the number of points (on our path) is not bounded.
	Given a set $X$ in a metric space $M$, define its {\it partial convex hull}  $\rm{PCH}(X)$ as the union of all geodesics between pairs of points in $X$.
	
	\begin{lemma}[Bound on the partial convex hull]\label{lem:PCH}
		\label{lm_union_geodes}
		There exists  $C$, depending on $d$ and $\varepsilon$,
		such that if  $\gamma \in \H^d$  is a curve of  length $l$, then
		the volume of the $\varepsilon$-neighborhood of its partial convex hull satisfies
		${\rm Vol} ({\rm PCH} ({\gamma})_{\varepsilon}) \leqslant C l+C$.	
	\end{lemma}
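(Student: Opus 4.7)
The plan is to reduce this to the previous Lemma \ref{le:cone} by exploiting the $\delta$-hyperbolicity of $\H^d$: we show that the entire partial convex hull sits inside a bounded neighborhood of a single cone from a point on $\gamma$, and then apply the cone volume bound.

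More precisely, I would fix an arbitrary point $x \in \gamma$ and consider the cone $\Cone(x, \gamma)$, which by definition is a union of geodesics from $x$. For any other two points $y, z \in \gamma$ and any geodesic $[y,z]$ between them, the geodesic triangle $xyz$ in $\H^d$ is $\delta$-thin for some $\delta = \delta(d)$ coming from the hyperbolicity of $\H^d$. The thin-triangle property then implies that $[y,z]$ lies in the $\delta$-neighborhood of $[x,y] \cup [x,z]$, and both of the latter geodesic segments are contained in $\Cone(x, \gamma)$. Varying $y$, $z$ and the choice of geodesic $[y,z]$, I obtain the inclusion
$$
{\rm PCH}(\gamma) \subseteq \Cone(x, \gamma)_{\delta}.
$$

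Taking $\varepsilon$-neighborhoods on both sides yields ${\rm PCH}(\gamma)_{\varepsilon} \subseteq \Cone(x, \gamma)_{\varepsilon + \delta}$. Now I apply Lemma \ref{le:cone} with $\varepsilon$ replaced by $\varepsilon + \delta$: since $\delta$ depends only on $d$, the resulting constant $C'$ depends only on $d$ and $\varepsilon$, and we obtain
$$
\Vol({\rm PCH}(\gamma)_{\varepsilon}) \leq \Vol(\Cone(x, \gamma)_{\varepsilon + \delta}) \leq C'(l+1),
$$
which is exactly the desired inequality $Cl + C$.

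I do not expect any real obstacle here: the only nontrivial ingredient is the thin-triangle property in $\H^d$, which is standard, and it is applied in precisely the form that lets the linear bound from Lemma \ref{le:cone} carry over verbatim (up to enlarging the neighborhood by the constant $\delta$). The choice of $x \in \gamma$ is arbitrary; any single base point on $\gamma$ suffices because the cone from one point already reaches every point of $\gamma$, and hyperbolicity takes care of all the \emph{other} geodesics in the partial convex hull.
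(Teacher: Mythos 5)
Your proposal is correct and follows essentially the same route as the paper: fix a point $x\in\gamma$, use thin triangles ($\delta$-hyperbolicity of $\H^d$) to show ${\rm PCH}(\gamma)$ lies in a bounded neighborhood of $\Cone(x,\gamma)$, and then apply Lemma \ref{le:cone} with the enlarged radius $\varepsilon+\delta$. The only difference is that you spell out the thin-triangle argument that the paper leaves implicit, which is harmless since geodesics in $\H^d$ are unique and hence $[x,y],[x,z]\subseteq\Cone(x,\gamma)$.
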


	\begin{proof}
		Fix some point $x\in \gamma$.
		Observe that by $\delta$-hyperbolicity of $\H^d$ we know that  ${\rm PCH}(\gamma)$ lies in the $\delta'$-neighborhood
		of the cone $\Cone (\gamma,x)$, for some $\delta'>0$. Therefore, its $\varepsilon$-neighborhood belongs to the $(\delta'+\varepsilon)$-neighborhood
		of the cone, and the claim follows therefore from the previous lemma.
	\end{proof}
	
	\begin{prop}\label{prop:hyperbolic}
		There exists a constant $K>0$ depending on $d$  such that
		for any
		order $T$ on $\Gamma_d$, with branches of all trees
		of $\overline{\Gamma_d}$
		convex with respect to $T$,
		we have  $\OR_{\Gamma_d, T}(k) \leqslant K$,  for any $k$.
	\end{prop}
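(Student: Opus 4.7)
The plan is to transcribe the strategy sketched in the paragraphs immediately preceding the proposition into a self-contained argument, combining three ingredients: a length bound on individual up-and-down paths, a volume bound on the region they all traverse, and a multiplicity bound coming from convexity of branches.

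Fix a finite set $X \subset \Gamma_d$ of cardinality $n+1$ and let $L := l_{\opt}(X)$ be the length of an optimal path through $X$ in $\Gamma_d$. Enumerate $X$ according to $T$ as $\alpha_0 >_T \alpha_1 >_T \dots >_T \alpha_n$. For each consecutive pair I would join $\alpha_{i-1}$ and $\alpha_i$ by a standard up-and-down path $\gamma_i$ in $\mathcal{F}_d$. By Lemma \ref{le:up_and_down} each $\gamma_i$ is an $(A,B)$-quasi-geodesic in $\Gamma_d$, and since $\Gamma_d$ is quasi-isometric to $\H^d$, the union $\bigcup_i \gamma_i$ lies in $\H^d$ as a family of quasi-geodesics with constants depending only on $d$.

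Next, I would promote the optimal tour through $X$ in $\Gamma_d$ to a piecewise-geodesic curve $\gamma_*$ in $\H^d$ of length at most $C_2 L$ visiting all points of $X$; this is possible because vertices of $\Gamma_d$ form a uniformly discrete net in $\H^d$. By Morse Lemma applied in the $\delta$-hyperbolic space $\H^d$, each quasi-geodesic $\gamma_i$ lies in a uniformly bounded neighborhood of the geodesic segment $[\alpha_{i-1},\alpha_i]$, which in turn lies in $\mathrm{PCH}(\gamma_*)$. Therefore every tile visited by $\bigcup_i \gamma_i$ lies in a fixed $\varepsilon$-neighborhood of $\mathrm{PCH}(\gamma_*)$, and Lemma \ref{lem:PCH} bounds the volume of this neighborhood by $C_4(L+1)$. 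Since all tiles are isometric and hence have the same fixed volume, the total number of distinct vertices of $\Gamma_d$ appearing on $\bigcup_i \gamma_i$ is at most $C_7(L+1)$ for a constant $C_7$ depending only on $d$.

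Finally — and this is the step that requires the hypothesis that branches of $\mathcal{F}_d$ are convex with respect to $T$ — I invoke Lemma \ref{lem:multiplicity} to conclude that each vertex of $\Gamma_d$ appears at most $C_6$ times in the multiset of vertices of $\gamma_1, \dots, \gamma_n$, where $C_6$ depends only on $d$. Consequently $\sum_i |\gamma_i| \le C_6 \cdot C_7 (L+1)$, and since $d_{\Gamma_d}(\alpha_{i-1},\alpha_i) \le |\gamma_i|$ we obtain $l_T(X) \le K\, l_{\opt}(X)$ for an appropriate constant $K = K(d)$, as required.

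The main obstacle, already isolated in the supporting lemmas, is the coupling between the global volume estimate for $\mathrm{PCH}(\gamma_*)$ (which ignores how the $\gamma_i$ overlap) and the multiplicity control on these overlaps; the convexity assumption on $T$ is precisely what makes the multiplicity bound work, by forbidding a standard up-and-down path from re-entering a branch that has already been exited.
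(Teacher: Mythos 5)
Your proposal is correct and follows essentially the same route as the paper's own argument (which appears in the text immediately before the proposition): connect consecutive points in the $T$-order by standard up-and-down quasi-geodesics, bound the number of distinct tiles they visit via the Morse lemma and the volume estimate for neighborhoods of the partial convex hull of a short curve through $X$ (Lemma \ref{lem:PCH}), and then convert the vertex count into a length bound using the multiplicity estimate of Lemma \ref{lem:multiplicity}, which is exactly where convexity of branches enters. The only cosmetic difference is that the paper first reduces to a single tree component $\mathcal{F}_d$ of the vertical-edge forest (where branches and the multiplicity lemma are stated), a step worth making explicit, but otherwise the two arguments coincide.
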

	
	\begin{proof}
		For any finite subset $X$ of $\Gamma_{d}$ we can choose a hierarchical order of $\mathcal{F}_d$ and find a subset $X'\subset \mathcal{F}_d$ that is isometric to $X$ and that is ordered in the same way as $X$.
		So we assume that we have a finite subset $X \subset \mathcal{F}_{d} \subset \Gamma_d \subset \H^d$, of cardinality  $n+1$,  and assume that this subset admits a path of length $L$ in $\Gamma_d$
		visiting its points.
		Let us number the  points of the set $X$ with respect to  an order
		on $\mathcal{F}_d$, satisfying the assumption of the proposition: 
		$\alpha_0 >_T \alpha_1 >_T \dots >_T \alpha_n$. We want to show that
		$$
		\sum_{i=1}^{n} d_{\Gamma_d}(\alpha_{i-1}, \alpha_i) \le KL,
		$$
		for some constant $K$ depending only on $d$.
		
		Connect $\alpha_i$ with $\alpha_{i+1}$, $0 \le i \le n$ by a geodesic in
		$\Gamma_d$, and consider the union of these
		geodesics.
		We estimate the total number of vertices on 
		the obtained path
		using Lemma \ref{le:cone}
		about cones and its corollary (Lemma \ref{lem:PCH}) to partial convex hulls. 
		
		Now for each $i: 0 \le i \le n-1$ consider a standard up-and-down path $\gamma_i$ between $\alpha_{i}$ and $\alpha_{i+1}$:
		\[\alpha_i = u^i_0, u^i_1, u^i_2, \dots, u^i_{n_i} = \alpha_{i+1}.
		\]
		From Lemma \ref{le:up_and_down} we know that these paths are quasi-geodesics in $\Gamma_d$ with some constants $A,B$. 
		Hence they are also  quasi-geodesics in $\H^d$ with some constants $A'$ and $B'$ (depending only on  $d$).
		
		As we have already mentioned, the Morse Lemma for $\delta$-hyperbolic spaces implies that there exists $C_1$ depending on $A'$ and $B'$ (and the hyperbolicity constant of the space $\H^d$)
		such that any $(A',B')$-quasi-geodesic between two points in $\mathbb{H}^d$
		belongs to the $C_1$-neighborhood of any geodesic between these points, see e.g. Chapter 5 of \cite{ghysdelaharpe}. 
		As we have mentioned, $A',B'$, and therefore also $C_1$ depend only on $d$.
		
		Recall that we are discussing a finite set $X$, which admits a path of length $L$ in $\Gamma_d$ visiting all its points. 
		Since the vertices of $\Gamma_d$ form a uniformly discrete space,  quasi-isometrically embedded in $\H^d$, there is a piecewise geodesic path $\gamma_*$ of length at most $C_2 L$ in $\H^d$, visiting all points of $X$. Here the constant $C_2$ depend only on $d$. The constants $C_3,C_4 \dots$
		that we will choose later in the proof will also depend only on $d$.
		
		The geodesics joining  points $a_i$ and $a_{i+1}$ in $\mathbb{H}^d$ lie in ${\rm PCH}(\gamma_*)$. 
		All tiles corresponding to $u^i_j$ are in the $C_3$-neighborhood of ${\rm PCH}(\gamma_*)$, for some $C_3$. Constant  $C_3$ can be chosen as the sum of $C_1$ and the diameter of a tile.
		
		We can therefore apply Lemma \ref{lem:PCH} for $\gamma_*$ and $\varepsilon = C_3$ and find a constant $C_4$
		depending only on $d$ such that all tiles containing corresponding points $u^i_j$
		belong to a subset of volume at most $C_4(L+1)$ of $\H^d$. 
		
		Let $C_5$ be the volume of a tile in $\H^d$ (since  the tiles are isometric they have the same volume). 
		We can conclude that
		the number of distinct vertices in $\Gamma_d$ in the union of quasi-geodesics $\gamma_i$ is at most  $\frac{C_4}{C_5}(L+1)$.
		
		The observation above holds for any order, we did not use that branches are convex with respect to $T$. However, in the following observation this assumption  for $T$ is essential. 
		We recall that by Lemma \ref{lem:multiplicity} there exists a constant $C_6$ such that any point appears
		at most $C_6$ times among the vertices of the quasi-geodesics $\gamma_i$. This implies that  $l_{T}(X)$ 
		is at most
		$\frac{C_4C_6}{C_5}(L+1)$.

	\end{proof}

	As we have already mentioned, the result of Bonk and Schramm \cite{bonkschramm}
	implies  that any  $\delta$-hyperbolic metric space of bounded growth  embeds quasi-isometrically into $\H^d$ for some $d$.
	We do not explain the definition of bounded growth but recall that any graph of bounded degree is a particular case of this definition.
	
	Observe that if a space embeds  quasi-isometrically into $\H^d$, this space also embeds quasi-isometrically into $\Gamma_d$. Therefore, combining   Proposition \ref{prop:hyperbolic}  with
	Lemma \ref{le:pullback} we obtain
	
	\begin{thm} \label {thm:hyperbolicspace}
		Let $M$  be a $\delta$-hyperbolic graph of
		bounded degree. 
		Then there exists  an order $T$ and
		a constant $C$
		such that for all $k$
		$$
		\OR_{M,T}(k) \le C.
		$$
	\end{thm}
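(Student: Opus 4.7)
The plan is to reduce Theorem \ref{thm:hyperbolicspace} to the already-established Proposition \ref{prop:hyperbolic} via a quasi-isometric embedding into the model graph $\Gamma_d$, followed by an application of the pullback lemma \ref{le:pullback}. Concretely, I would invoke the theorem of Bonk and Schramm: since $M$ is $\delta$-hyperbolic of bounded degree, it has bounded growth, and therefore admits a quasi-isometric embedding $\alpha : M \to \H^d$ for some dimension $d$ depending on $M$.

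Next, from the construction in Subsection \ref{subsec:bin}, $\Gamma_d$ is quasi-isometric to $\H^d$ (each tile has bounded diameter and bounded number of neighbors, and every point of $\H^d$ lies in some tile). Choosing a quasi-inverse, I can compose with $\alpha$ to obtain a quasi-isometric embedding $\beta : M \to \Gamma_d$. Both spaces are uniformly discrete: $M$ is a bounded-degree graph, so its vertex distances are integers $\ge 1$, and $\Gamma_d$ is a graph as well. Now I would take the order $T_0$ on $\Gamma_d$ provided by Proposition \ref{prop:hyperbolic} (any hierarchical order on the forest $\mathcal{F}_d$, extended across the finitely many trees making up $\Gamma_d$), which satisfies $\OR_{\Gamma_d, T_0}(k) \le K$ for every $k$, with $K$ depending only on $d$.

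Finally, let $T_M$ be the pullback of $T_0$ under $\beta$; such a pullback exists since it suffices to choose an arbitrary order on each fiber $\beta^{-1}(y)$. Applying Lemma \ref{le:pullback} to $\beta$, whose source and target are uniformly discrete, yields a constant $K'$ (depending on the quasi-isometry constants of $\beta$ and the uniform discreteness constant) such that
$$
\OR_{M, T_M}(k) \;\le\; K' \cdot \OR_{\Gamma_d, T_0}(k) \;\le\; K' K \;=:\; C
$$
for all $k \ge 1$, which is the claimed bound. The theorem then follows with this choice of order and constant. There is no real obstacle at this stage: the substantive work has been front-loaded into Proposition \ref{prop:hyperbolic} (and its supporting Lemmas \ref{lem:insidethebox}, \ref{lem:multiplicity}, \ref{le:cone}, \ref{lem:PCH}); what remains here is purely the bookkeeping of composing the Bonk--Schramm embedding with the quasi-isometry $\H^d \sim \Gamma_d$ and invoking the quasi-isometric invariance of the order ratio function on uniformly discrete spaces.
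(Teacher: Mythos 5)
Your proposal is correct and follows essentially the same route as the paper: invoke Bonk--Schramm to embed $M$ quasi-isometrically into $\H^d$, note that $\H^d$ is quasi-isometric to $\Gamma_d$ so $M$ embeds quasi-isometrically into $\Gamma_d$, and then combine Proposition \ref{prop:hyperbolic} with the pullback Lemma \ref{le:pullback} to transfer the uniform bound on the order ratio function back to $M$. This is exactly how the paper deduces Theorem \ref{thm:hyperbolicspace}.
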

	
	In the formulation of the Theorem above we use the convention that edges of the graph
	are of length $1$, and we can assume that they are included in the metric space $M$, see Lemma \ref{lem:edgesnotedges}.
	
	\begin{rem}\label{rem:graphsqiHd}
		The order breakpoint of $\Gamma_{d}$ and the constant $C$ in Theorem \ref {thm:hyperbolicspace} of $\Gamma_d$ tend to infinity as $d$ tends to infinity. 
		To see this it is enough to consider uniform embeddings
		of a cube of dimension $d-1$ into $\Gamma_d$.
		For more on this see Section $6$
		in \cite{ErschlerMitrofanov2}.		
		
		From Lemma \ref{le:snakesqi} it follows that the same claim is clearly true for any uniformly discrete metric space, quasi-isometric to $\H^d$. 
		We remind the reader that
		for each $d\ge 1$ there exist finitely generated groups quasi-isometric to $\H^d$ \cites{Borel63, gromovpiatetski}, but it seems to be a challenging open problem  \cite{gromovMarkov} to understand 
		whether one can find more elementary constructions of such groups (for
		$d\ge 3$).
	\end{rem}
	
	\begin{rem}\label{rem:ahmedov}
		The properties of the $OR$ function we consider are not to be confused with the ``travelling salesman property''  of a very different nature,  introduced for finitely generated groups by Akhmedov 
		\cite{akhmedov1}, see also \cite{akhmedov2}, \cite{guba}. 
		While  hyperbolic groups (and spaces) are best possible cases for our question about $\OR(k)$, 
		hyperbolic groups are among worst possible case (``TS'' groups)
		for the question of Akhmedov, see \cite{akhmedov1}.
		We also  mention that all amenable groups are not so bad for that problem (they are not ``TS'' groups, see \cite{guba} ,
		where this observation is attributed to Thurston), while for the questions of our paper there is no visible  relation with amenability. Both amenable and non-amenable groups can  have finite $AN$-dimension (and thus satisfy logarithmic bound for $OR(k)$, as we will show in  \cite{ErschlerMitrofanov2}, see Theorem I)  and there are numerous classes of amenable groups satisfying  $OR(k)=k$ ( \cite{ErschlerMitrofanov2}, Corollary 6.8).	
	\end{rem}

	\subsection{Further examples with bounded order ratio functions} \label{Subsection:nothyperbolic}
	
	A metric space with bounded order ratio function does not need to be hyperbolic. 
	We have seen already in Section $3$ (see Lemma \ref{le:ORacyclicunion} and Remark \ref{rem:ORcyclicshift1}) that for cactus graphs the order ratio function is bounded. 
	Choosing circles of unbounded length we clearly can obtain not-hyperbolic examples (in particular among graphs of bounded degree). 
	
	A wider family of examples is provided by the following lemma.
	\begin{lemma} \label{lem:severalcopies}
		Let $M$ be a metric space that admits an order $T$ such that $\OR_{M,T}(k) \le C$ for all $k\ge 1$.  Let $\Omega$ be a subset of $M$.  Given $s\ge 1$, denote by $M_s(\Omega)$ a space which is obtained from $s$ copies of $M$ by gluing them over $\Omega$. 
		For any $s\ge 1$ the space $M_s(\Omega)$ admits an order $T_s$ such that $\OR_{M_s(\Omega),T_s}(k) \le sC + s-1$.
	\end{lemma}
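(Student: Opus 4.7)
The plan is to define $T_s$ by concatenating the copies: label the $s$ copies $M^{(1)}, \ldots, M^{(s)}$ (identified pairwise along $\Omega$), place $\Omega$ entirely inside the first copy, and take $T_s$ to list all of $M^{(1)}$ in the order $T$, then all of $M^{(2)} \setminus \Omega$ in the order $T$, and so on up to $M^{(s)} \setminus \Omega$. For a finite $X \subset M_s(\Omega)$, I decompose $X = X_0 \sqcup X_1 \sqcup \cdots \sqcup X_s$ with $X_0 = X \cap \Omega$ and $X_i = X \cap (M^{(i)} \setminus \Omega)$; then the $T_s$-ordered path on $X$ breaks into a tour of $X_0 \cup X_1$ inside $M^{(1)}$, then at most $s-1$ inter-copy jumps separating tours of each $X_i$ ($i \ge 2$) inside $M^{(i)}$, every tour following the order $T$ restricted to that copy.

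The central observation I would establish first is that each copy $M^{(i)}$ embeds isometrically into $M_s(\Omega)$: any path between two points $x, y \in M^{(i)}$ that leaves and returns to $M^{(i)}$ must cross $\Omega$ at some $\omega_1, \omega_2$, and by the triangle inequality in $M$ its length is at least $d_M(x, \omega_1) + d_M(\omega_1, \omega_2) + d_M(\omega_2, y) \ge d_M(x, y)$, so detours through other copies never help. Consequently, for any subset $Y$ contained in a single copy one has $l_{\opt, M_s(\Omega)}(Y) = l_{\opt, M}(Y)$, and combined with the elementary monotonicity $l_{\opt}(Y) \le l_{\opt}(X)$ for $Y \subseteq X$, this gives $l_{\opt, M}(X_0 \cup X_1) \le l_{\opt}(X)$ and $l_{\opt, M}(X_i) \le l_{\opt}(X)$ for $i \ge 2$.

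The bookkeeping is then immediate. Applying the hypothesis $\OR_{M,T}(k) \le C$ chunk by chunk, the tour of $X_0 \cup X_1$ has $T$-length at most $C \cdot l_{\opt, M}(X_0 \cup X_1) \le C \cdot l_{\opt}(X)$, and similarly for each $X_i$ with $i \ge 2$, so the at most $s$ nonempty chunks contribute at most $sC \cdot l_{\opt}(X)$ in total. Each of the at most $s-1$ inter-copy jumps connects two points of $X$ and so has length at most $\diam X \le l_{\opt}(X)$, contributing at most $(s-1)\, l_{\opt}(X)$. Summing yields $l_{T_s}(X) \le (sC + s - 1)\, l_{\opt}(X)$, hence $\OR_{M_s(\Omega), T_s}(k) \le sC + s - 1$. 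The only delicate point is the isometric-embedding step in the second paragraph; once that is in hand, everything reduces to summing a per-copy competitive factor $C$ with a trivial diameter bound on each of the $s - 1$ jumps.
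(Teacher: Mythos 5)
Your proof is correct and takes essentially the same route as the paper's: the same block order (all of the first copy first, then each $M^{(i)}\setminus\Omega$ in turn, with $T$ inside each block), the same decomposition of the $T_s$-path into per-copy chunks plus at most $s-1$ jumps, and the same bounds $C\,l_{\opt}(X)$ per chunk and $l_{\opt}(X)$ per jump. The only difference is that you make explicit the isometric embedding of each copy into $M_s(\Omega)$, a point the paper uses implicitly.
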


	\begin{proof}
		For $1 \leq i \leq s$ we denote by $M_i$ the subset of $M_s(\Omega)$ that corresponds to the $i$-th copy of $M$.
		It is clear that $\Omega$ is the intersection of all $M_i$.
		We define the order $T_s$ on $M_s(\Omega)$ as follows: 
		\begin{enumerate}
			\item if $x\in M_1$, $y\not \in M_1$, then $x<_{T_s}y$;
			\item if $x, y \in M_1$, then we define $x<_{T_s}y$ if and only if $x<_T y$;
			\item if $x, y \not \in \Omega$, $x\in M_i$, $y\in M_j$ and $i < j$, then $x<_{T_s}y$;
			\item if $x,y\in M_i\setminus \Omega$ for some $i$ and $x<_T y$ , then $x<_{T_s}y$.
		\end{enumerate}
		
		Consider a finite set $X\subset M_s(\Omega)$ and denote by $L$ the length $l_{\opt}(X)$ of the shortest path  in $M_s(\Omega)$.
		We want to estimate $l_{T_s}(X)$ from above.  
		Let $X_1$ be the intersection $X\cap M_1$ and let $X_i$ for $i > 1$ be the intersection $X\cap (X_i \setminus \Omega)$.
		Without loss of generality we assume that all $X_i$ are non empty. 
		
		It is clear that 
		$$
		l_{T_s}(X) \leq (l_{T_s}(X_1) +\dots +l_{T_s}(X_s)) + (t_1 + \dots + t_{s-1}),
		$$
		where $t_i$ is the distance between the maximal point of $X_i$
		with respect to $T_s$ and the minimal  point of $X_{i+1}$
		with respect to $T_s$.
		
		For any $i$ we can estimate $l_{\opt}(X_i) \leq L$, $l_{T_s}(X_i) \leq C l_{\opt}(X_i) \leq CL$, $t_i \leq L$.
		Hence $l_{T_s}(X) \leq (sC + s-1)L$.
	\end{proof}
	
	We know that the assumption of Lemma \ref{lem:severalcopies} is in particular
	satisfied  when $M$ is a uniformly discrete hyperbolic space of uniformly bounded growth. 
	This assumption holds also  for any metric tree (not necessarily uniformly discrete).
	Figure  \ref{pic:nothyperbolic} shows an example of the construction above, where $M$ is a tree, $s=2$. Observe that the space in this example is non-hyperbolic. For all $r \in \mathbb{N}$ it contains an isometric copy of a cycle of length $2^{r+2}$.
	
	\begin{figure}[!htb] 
		\centering
		\includegraphics[scale=.55]{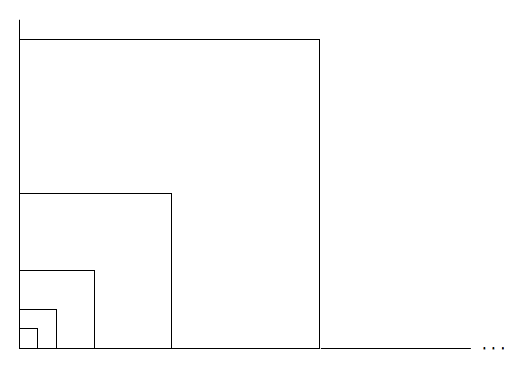}
		\caption{An example of a non-hyperbolic space with bounded order ratio function.
		}
		\label{pic:nothyperbolic}
	\end{figure}
	
	Other particular cases of the spaces from the claim of Lemma \ref{lem:severalcopies} are spaces obtained by gluing together several copies of graphs $\Gamma_d$ (which is quasi-isometric to $\H^{d}$), by a horizontal  subspace in
	the half-space model
	of dimension $d-1$. 
	We recall that in our terminology $\ORI_M$  
	is defined to be the equivalence class, with respect to equivalence up to a multiplicative constant, of functions  $\OR_{M'}$, where $M'$ is some $(\varepsilon, \delta)$-net of $M$.
	If we glue two copies of $\H^2$ by this horizontal subspace, we obtain a space $M$, satisfying $\ORI_M \le {\rm Const}$, but admitting contractible quasi-isometrically embedded circles of
	arbitrarily large length.
	
	We have discussed non-hyperbolic
	spaces with bounded function $\OR(k)$. It seems interesting to understand whether this can happen
	for finitely generated non-hyperbolic groups. In this context we ask the following questions: what is the asymptotics of $\OR(k)$
	for solvable Baumslag-Solitar groups? The same question can be asked for $\mathbb{Z}\wr \mathbb{Z}/2\mathbb{Z}$.
	
	\appendix
	
	\section{Proof of Lemma \ref{lem:edgesnotedges}}
	\label{app:edgesnotedges}
	
	Let $\Gamma$ be a graph (finite or infinite) with edges of length 1 and let $T$ be an order on the set $V$ of its vertices.
	We remind the reader that $\Star(T)$ is the order defined on $\Gamma$ (including its edges) described in Definition \ref{def:star}.
	
	We want  to prove that for  all
	$k$  it holds that
	$$
	\OR_{\Gamma, \Star(T)}(k) \leq 8  \OR_{V,T}(k) + 4.
	$$
	
	In the definition of the order   $\Star(T)$ we consider the partition of $\Gamma$ into star figures. 
	Consider a subset $X \subset \Gamma$.
	First observe that if  $X$ belongs  to one of the stars, then  $l_{\Star(T)}(X) \leq 2  l_{\opt}(X)$. This is because  the order on this star is  (a very particularly easy) example of a hierarchical
	order on a  tree.
	Also observe that if $X$ belongs to one of the edges of the graph, then  $l_{\Star(T)}(X) \leq 2 l_{\opt}(X)$. Indeed, the edge consists of two halves, in each of the halves the points
	are visited in a linear order, and the jump from one half to the other is not larger than the diameter of the set.
	In the sequel we can therefore assume that $X$ does not belong to one of the star figures and also is not inside one of the edges of the graph.
	
	Assume that $\#X \leq k+1$.
	The points of  $X$ belong to  $k_0 \leq k+1$ star figures.
	Denote by  $Y$ the set of centers of these stars, and put  $Z = X \bigcup Y$.
	It is clear that  $l_{\Star(T)}(X) \leq l_{\Star(T)}(Z)$.
	We recall that by definition of our order the path first visits all points of one of the star figures, then all points from another one and so on.
	We can therefore write 
	$$
	l_{\Star(T)}(Z) = l_1 + l_2,
	$$
	where  $l_1$ is the sum of lengths of jumps between distinct star figures (the number of such jumps  is $k_0-1$),  and $l_2$  is the total length of  jumps inside star figures.
	
	First we estimate  $l_1$. The length of the jump from the star figure  $S_1$ centered at  $y_1$ to the star figure  $S_2$ centered at  $y_2$ is at most 
	$d(y_1, y_2) + 1 \leq 2 d(y_1,y_2)$. 
	This implies that the total sum of such jumps is not  greater than the length of the associated path in $Y$, that is 
	$$
	l_1 \leq 2 l_T(Y) \leq 2\OR_{V,T}(k_0-1)\cdot l_{\opt}(Y).
	$$
	Let us understand the relation between  $l_{\opt}(Y)$ and $l_{\opt}(X)$.
	If we have some  path through the points of  $X$, we can modify it to visit also all points of  $Y$: after the first visit of each star figure, except of the first and last visited star figures, we jump to its center and back after this first visit.  In the first visited star figure, we start with its center and then continue
	as in the original path.
	In the last visited star figure, after the last visit we jump to its center (and do not jump back).
	
	The length is increased by at most
	$1$ for each star figure, and not by more than $1/2$ in case of the first visited star figure and the last visited star figure. Therefore,
	$$
	l_{\opt}(Y) \leq l_{\opt}(X) + (k_0 - 1).
	$$
	For the optimal path of $X$ consider for each visited star figure (except the first one) the first visited point. 
	These $k_0-1$ points are  distinct middle points of some edges. 
	Since the distance between two distinct middle points is at least $1$, we observe that
	$$
	l_{\opt}(X) \geq k_0-2.
	$$
	Summing this with the inequality  $2 l_{\opt}(X) \geq 1$ (which holds because of our assumption that $X$ is not inside one star figure and not inside one edge), we get that 
	$k_0 - 1 \leq 3 l_{\opt}(X)$.
	And, therefore,  $l_1$ satisfies
	$l_1 \leq  8 \OR_{V,T}(k)l_{\opt}(X)$.
	
	Now we estimate  $l_2$.  Let us look at the position of the  points of $X$  on the corresponding  edges  of the graph.
	Suppose that  $x \in X$ belongs to the edge $e$. We denote by $m_e$ the middle point of this edge, and suppose that $x$ belongs to 
	the  half-edge $m_e v$,
	$v \in V$. We suppose also that   the interval $m_e x$ does not
	have other points of  $X$. In this case  we associate to  $x$ the number  $d(x,v)$. In other words, we associate the distance from $x$ to the vertex $v$ if $x$ is the most distant point from the vertex  $v$ on the half-edge $m_ev$.
	Otherwise, we do not associate anything to $x$.
	Observe that if  $S$ is the total sum of associated numbers, then 
	$l_2 \leq 2S$ because of the hierarchical order on each star figure.
	Note that for each edge there are at most two associated numbers. Let us call a point  of $X$ {\it leading} if it has an associated number and it is the largest of the two numbers associated
	to the points of  its edge. The set of leading points is denoted by  $X_{r}$.
	The sum of the numbers associated to leading points clearly satisfies
	$S' \geq S/2$.
	If we have a leading point with an associated number  $s_1$, and another leading point with an associated number  $s_2$,  then the distance  between these two points is at least 
	$s_1+s_2$.
	Taking in account that $X$ does not belong to one edge and hence  $\# X_r>1$, we get 
	$l_{\opt}(X) \geq l_{\opt}(X_r) \geq S'$,
	and consequently, 
	$
	l_2 \leq 4l_{\opt}(X)$.
	Therefore,
	$$
	l_{\Star(T)}(X) \leq l_{\Star(T)}(Z) = l_1+l_2 \leq (8 \OR_{V,T}(k) + 4)l_{\opt}(X),
	$$
	
	this concludes the proof of Lemma \ref{lem:edgesnotedges}.
	\qed
	
	\section{A theorem of Kapovich about tripods in geodesic
		metric spaces and related questions}
	\label{sec:tripods}
	
	\begin{definition}
		Let $M$ be a metric space.
		An $(R,\varepsilon)$-tripod is the union of an $\varepsilon$-geodesic $\mu=YO \cup OZ$ (a curve of length at most $d_M(Y,Z)+\varepsilon$) and a
		curve $\gamma = XO$ of length at most $d+\varepsilon$, where $d$ is the distance from $X$ to $\mu$, 
		where $d_M(O,X), d_M(O,Y), d_M(O,Z) \ge R$. 
	\end{definition}

	We do not recall the definition of \emph{path metric spaces} (a generalisation of a notion of geodesic metric spaces, which makes sense for spaces which are not necessarily complete), but mention that any geodesic metric space is an example  of  a path metric space.
	
	We use the following result of M.~Kapovich.
	
	\begin{claim}\label{kapovich}
		[\cite{kapovich}, Theorem 1.4, see also Corollary 7.3]
		Let $M$ be a path metric space not quasi-isometric to a point, a ray or a line. Then for any $R>0$ and any $\varepsilon>0$, $M$ admits an $(R,\varepsilon)$-tripod.
	\end{claim}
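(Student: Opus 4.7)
The plan is to prove the contrapositive: assume $M$ admits no $(R_0,\varepsilon_0)$-tripod for some fixed $R_0,\varepsilon_0>0$, and conclude that $M$ is quasi-isometric to a point, a ray, or a line. If $\diam(M)<\infty$ then $M$ is quasi-isometric to a point, so I assume $M$ is unbounded. Unpacking the tripod definition, the hypothesis translates to the following ``no-branching'' condition: for every $\varepsilon_0$-geodesic $\mu:[0,L]\to M$ with $L\ge 2R_0$ and every $X\in M$ whose approximate nearest point on $\mu$ is some $O=\mu(t)$ with $\min(t,L-t)\ge R_0$, one must have $d(X,O)<R_0$; otherwise the points $X$, $\mu(0)$, $\mu(L)$, $O$ would form a forbidden tripod. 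Thus every point of $M$ projecting to the deep interior of a long $\varepsilon_0$-geodesic lies in an $R_0$-tube around $\mu$.

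The next step is to construct a single long ``spine'' $\Sigma$ covering $M$. Using unboundedness, I choose $\varepsilon_0$-geodesic arcs $\mu_n:[0,L_n]\to M$ between points $y_n,z_n$ with $L_n\to\infty$, reparametrized so that a chosen midpoint $O_n$ remains in a bounded ball around a fixed basepoint $p\in M$. By a diagonal argument applied to a countable $\varepsilon_0$-dense subset of $M$ (to sidestep the absence of local compactness of a general path metric space), I extract a subsequential pointed limit: a one- or two-sided infinite $\varepsilon_0$-quasi-geodesic $\Sigma$ in $M$. For any $X\in M$, choosing $n$ with $L_n$ much larger than $d(p,X)$ forces the approximate nearest point on $\mu_n$ to $X$ into the deep interior of $\mu_n$, so the step above gives $d(X,\mu_n)<R_0$, and passing to the limit yields $d(X,\Sigma)\le R_0$.

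Consequently $M$ lies in a bounded neighborhood of $\Sigma$, the nearest-point projection $M\to\Sigma$ is a quasi-isometry, and since $\Sigma$ is (quasi-isometric to) a ray or a line we contradict the assumption on $M$. The main obstacle is the subsequential limit construction in the second step: a general path metric space need not be proper, so the usual compactness for curves fails and genuine Gromov--Hausdorff limits need not exist in $M$ itself. A robust workaround is to avoid taking a limit and instead argue directly: show that a single sufficiently long $\mu_n$ already $R_0$-covers any prescribed ball around $p$, and then extend the spine iteratively while preserving this coverage, gluing the successive arcs into an $\varepsilon_0$-quasi-geodesic inside $M$. This is essentially the strategy in Kapovich's original proof, and the delicate point is to verify that the iterative extension does not itself create a projection landing near an endpoint, which is controlled by choosing each new $\mu_n$ to extend well past all previously examined balls.
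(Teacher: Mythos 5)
First, note that the paper does not contain a proof of this claim at all: it is quoted from Kapovich (Theorem 1.4, see also Corollary 7.3 of \cite{kapovich}), so you are attempting to reprove a cited external theorem rather than reconstruct an argument given in the paper. Your set-up is reasonable: the contrapositive, and the unpacking of the no-tripod hypothesis as a tube condition (any point whose almost-nearest point on a long $\varepsilon_0$-geodesic lies at distance at least roughly $R_0+\varepsilon_0$ from both endpoints must lie within $R_0$ of the arc) is correct up to constant bookkeeping, and ``$M$ lies in a bounded tube around a spine, hence is quasi-isometric to a ray or a line'' is the right target.

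However, there is a genuine gap, and it sits exactly where the content of the theorem lies. The limit construction is indeed unavailable (a path metric space need not be proper, and a diagonal argument over a countable dense subset does not produce convergent subsequences of the points $\mu_n(t)$), and your replacement --- that a single sufficiently long $\mu_n$ already $R_0$-covers any prescribed ball around $p$, and that the spine can be ``extended well past all previously examined balls'' --- is asserted, not proved. The no-tripod hypothesis gives no information about points $X$ whose approximate projection lands within $R_0$ of an endpoint of the current arc, and such $X$ need not be close to $p$: think of a point branching off near the endpoint of the arc (or near $p$ itself when $p$ is an endpoint, as in the ray-like case, where your normalization ``midpoint of $\mu_n$ stays near $p$'' is not even achievable). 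To handle such an $X$ you must either produce a longer arc whose deep interior passes by $X$'s direction or exhibit a tripod; but two long arcs emanating from $p$ in different directions do not by themselves yield a tripod (a line realizes precisely this configuration), so proving that there are at most two asymptotic directions, that successive arcs line up rather than veer off or fold back (needed also for the glued spine to be a quasi-geodesic), and that arcs passing deep by any prescribed ball exist at all, requires a genuine argument with the three-pronged tripod configuration that the sketch does not supply. As written, the ``delicate point'' you flag is not a technical verification but the heart of Kapovich's theorem, so the proposal does not constitute a proof; relying on the citation, as the paper does, or reproducing Kapovich's actual argument, is what is needed here.
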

	
	\noindent Moreover, Kapovich showed that for any $R>0$ one can find a $(R,0)$-tripod inside any ultralimit $M_{\omega}$ of the constant sequence of (pointed) metric spaces $M$ from the theorem.
	Since $M_{\omega} = M$ for proper geodesic metric spaces, such metric spaces (if not quasi-isometric to a point a ray or a line) admit tripods with $\mu$ being a geodesic segment and $\gamma$ being a geodesic segment of shortest length to $\mu$.

	Now we recall our notation for tripods:  a tripod $T_R$ consists of $3$ segments
	of length $R$ glued together by an endpoint.
	\begin{claim}\label{le:Bclaim1}
		Let $M$ be a geodesic metric space and let $T$ be a tripod with center $y'$ and legs $y'A'$, $y'B'$ and $y'x'$.
		Let $\mu: T \to M$ be a mapping and denote $\mu(A') = A$, $\mu(B') = B$, $\mu(y') = y$, $\mu(x') = x$.		 		
		Assume that $\mu:[A',y'] \cup [y',B'] \to M$ is  a length parametrized curve such that 
		$$
		d_M(\mu(A'), \mu(B')) \leq d_T(A',y') + d_T(B',y') -\varepsilon
		$$
		for  $\varepsilon > 0$.
		Assume that $\mu([y'x']) \to M$ is a length parametrized geodesic curve, and $d_M(x',y') \leq d + \varepsilon$,  
		where $d$ is the distance
		from $x$ to $\mu([A',B'])$.
		
		Then the map 
		$\mu:T\to M$ satisfies 
		$$
		\frac{1}{3} d_{T}(z,t) - \varepsilon \le d_M(\mu(z), \mu(t)) \le 
		d_{T}(z,t),
		$$
		
		for any $z,t \in T$.
	\end{claim}
	
	\begin{tikzpicture}[scale = 0.8]
	\begin{scope}[shift={(8,0)}]
	\draw [ultra thick] plot [smooth] coordinates {(-2,0) (-1,-0.25) (0.15,0) (1,0.3) (2,0.15)};
	\draw [thick] (-0.5,1.5) to[out=-60,in=100] node[midway,above] {} (0.15,0);
	\node[above] at (-2, 0)[above] {$A$};
	\node[above] at (2, 0.15)[above] {$B$};
	\node[below] at (0.15, 0)[below] {$y$};
	\node[above] at (-0.5, 1.5)[above] {$x$};
	\node[above] at (0.5, 1)[above] {$M$};
	\end{scope}
	
	\draw[->][thick] (3,0.5) -- node[midway,above] {$\mu$} (4,0.5);
	\draw[thick] (-2,-1) -- (0,0) -- (2,-1);
	\draw[thick] (0,0) -- (0,1.5);
	\node[above] at (-2, -1)[above] {$A'$};
	\node[above] at (2, -1)[above] {$B'$};
	\node[above] at (0, 0)[below] {$y'$};
	\node[above] at (0, 1.5)[above] {$x'$};
	\node[above] at (-1, 1)[above] {$T$};
	\end{tikzpicture}
	
	\begin{proof}
		It is clear that for any $z,t \in T$ that
		correspond to the leg $[x',y']$ we have $d_M(\mu(z), \mu(t))= d_T(z,t)$. 
		We also know that for any $z,t \in T$ such that $z, t \in A'B'$
		(that is, in the union of two legs)
		it holds that
		$$
		d_T(z,t) -\varepsilon \le d_M(\mu(z), \mu(t)) \le d_T(z,t).
		$$
		It is sufficient therefore to consider the case when the point $z \in [y',x']$ and the other point $t \in [A',B']$. 
		
		Observe that in this case
		$$
		d_T(z,t) = d_T(z,y')+ d_T(y',t)
		$$
		and hence
		$$
		d_M(\mu(z), \mu(t)) \le d_M(\mu(z), \mu(y'))+ d_M(\mu(t), \mu(y')) \le
		d_T(z,y')+ d_T(y',t) = d_T(z,t).
		$$
		
		Denote $d_T(t,y')$ by $a$ and denote $d_T(z,y')$ by $b$.
		It is clear that $d_T(z,t) = a+b$.
		
		If $a \geqslant 2b$, then $a-b \geq \frac{1}{3}(a+b)$ and
		
		$$
		d_M(\mu(z),\mu(t)) \ge
		|d_M(\mu(t),y)
		-d_M(\mu(z),y)| \ge a -\varepsilon - b \geqslant d_T(z,t)/3 - \varepsilon. 
		$$
		
		If $a\leqslant 2b$, then $b \geqslant \frac{1}{3}(a+b)$ and
		
		$$
		d_M(\mu(z),\mu(t)) \ge
		d_M(\mu(z),\mu([A',B']) \ge b-\varepsilon \ge d_T(z,t)/3 - \varepsilon.
		$$
	\end{proof}
	
	Using Claim \ref{kapovich} (taking in account a remark   below this claim) and Claim \ref{le:Bclaim1} we conclude that if $M$ is a geodesic metric space, not quasi-isometric to a point, a ray or a line, then 
	for any $\varepsilon>0$ and each  $n\ge 1$ there exists $\rho_n: T_n \to M$ such that for any $x,y \in T_n$
	$$
	\frac{1}{3} d_{T_n}(x,y) - \varepsilon \le d_M(\rho_n(x), \rho_n(y)) \le 
	d_{T_n}(x,y) +\varepsilon.
	$$
	Thus we have proved Lemma \ref{lem:containstripod}.
	
	\begin{rem} It is clear that the constant $1/3$ in Claim \ref{le:Bclaim1} can not be improved, as an example of a circle shows (or, if one wants to avoid degenerate tripods, an example of an interval attached by its endpoint to a circle)
	\end{rem}
	
	\begin{tikzpicture}[scale = 1.5]
	\draw (0,1) -- (1,1);
	\draw[ultra thick] (0,1.1) -- (0,0) -- (1,0) -- (1,2);
	\draw[ultra thick] (1,0) -- (2,0);
	\node[above] at (0, 1)[left] {$\mu(t)$};
	\node[above] at (1, 0)[below] {$y$};
	\node[above] at (1, 1)[right] {$\mu(z)$};
	\node[above] at (1, 1.5)[right] {};
	\node[above] at (1, 0.5)[right] {};
	\node[above] at (0.5, 0)[below] {};
	\node[above] at (1.5, 0)[below] {};
	\node[above] at (0, 0.5)[left] {};
	\end{tikzpicture}

\end{document}